\newcommand*\patchAmsMathEnvironmentForLineno[1]{%
  \expandafter\let\csname old#1\expandafter\endcsname\csname #1\endcsname
  \expandafter\let\csname oldend#1\expandafter\endcsname\csname end#1\endcsname
  \renewenvironment{#1}%
     {\linenomath\csname old#1\endcsname}%
     {\csname oldend#1\endcsname\endlinenomath}}%
\newcommand*\patchBothAmsMathEnvironmentsForLineno[1]{%
  \patchAmsMathEnvironmentForLineno{#1}%
  \patchAmsMathEnvironmentForLineno{#1*}}%
\newtheorem{remark}{Remark}[]
\newtheorem{example}{Example}[]
\newtheorem{proposition}{Proposition}[]
\numberwithin{equation}{section}
\numberwithin{theorem}{section}
\numberwithin{remark}{section}
\numberwithin{example}{section}
\newcommand{\bbE}{\mathbb{E}}
\newcommand{\bbR}{\mathbb{R}}
\newcommand{\cX}{\mathcal{X}}
\newcommand{\cF}{\mathcal{F}}
\newcommand{\cG}{\mathcal{G}}
\newcommand{\sigmad}{\sigma^\dagger}
\newcommand{\cV}{\mathfrak{V}}
\definecolor{lightblue}{rgb}{.90,.95,1}
\definecolor{darkgreen}{rgb}{0,.5,0.5}
\definecolor{lightgreen}{rgb}{.90,1,0.90}
\newcolumntype{P}[1]{>{\centering\arraybackslash}m{#1}}
\newcolumntype{L}[1]{>{\raggedright\let\newline\\\arraybackslash\hspace{0pt}}m{#1}}
\newcolumntype{C}[1]{>{\centering\let\newline\\\arraybackslash\hspace{0pt}}m{#1}}
\newcolumntype{R}[1]{>{\raggedleft\let\newline\\\arraybackslash\hspace{0pt}}m{#1}}
\DeclareMathOperator*{\argmin}{arg\,min}
\begin{document}

\begin{frontmatter}
\title{Ensemble Kalman Inversion For Sparse Learning Of Dynamical Systems From Time-Averaged Data}
\author[1]{Tapio Schneider}\ead{tapio@caltech.edu}
\author[1]{Andrew M. Stuart}\ead{astuart@caltech.edu}
\author[1]{Jin-Long Wu}\ead{jinlong@caltech.edu}
\address[1]{California Institute of Technology, Pasadena, CA 91125.}

\begin{abstract}
Enforcing sparse structure within learning has led to significant advances
in the field of data-driven discovery of dynamical systems. However,
such methods require access not only to time-series of the state of the
dynamical system, but also to the time derivative. In many
applications, the data are available only in the form of time-averages such as moments and autocorrelation
functions. We propose a sparse learning methodology to discover the vector fields defining a (possibly stochastic or partial) differential equation, using 
only time-averaged statistics. Such a formulation of sparse learning naturally leads to a
nonlinear inverse problem to which we apply the methodology of ensemble Kalman inversion (EKI). EKI is chosen because it may be formulated in terms of
the iterative solution of quadratic optimization problems;
sparsity is then easily imposed. We then apply the EKI-based sparse learning methodology to various examples governed by stochastic differential equations (a noisy Lorenz 63 system), ordinary differential equations (Lorenz 96 system and coalescence equations), and a partial differential equation (the Kuramoto-Sivashinsky equation). The results demonstrate that time-averaged statistics can be
used for data-driven discovery of differential equations using sparse EKI. The proposed sparse learning methodology extends the scope of data-driven discovery of differential equations to previously challenging applications and
data-acquisition scenarios. 
\end{abstract}

\begin{keyword}
Ensemble Kalman inversion \sep sparse learning \sep dynamical systems \sep time-averaged data
\end{keyword}

\end{frontmatter}

\clearpage
\section{Introduction}

\subsection{Overview and Literature Review}
The goal of this paper is to describe a sparse learning methodology 
to discover the vector fields defining a (possibly stochastic or partial) differential equation, using time-series data. The
approach is to use ensemble Kalman inversion (EKI) to learn the unknown vector fields by matching the model to time-averaged statistics derived from the time-series data.  
Sparse learning allows for
the discovery of dynamical models from within a large dictionary of models; learning from time-averaged statistics
is often necessary either because data are available only in that form,
or because use of time-averages avoids incompatibility issues between
model and data at small time increments and the lack of
differentiability of sample paths of stochastic differential
equations (SDEs). The EKI methodology is
an approach to parameter identification, which lends itself
naturally to the imposition of constraints such as sparsity,
and which is known empirically to be robust and flexible.
The work presented here leads to a new approach which widens the
scope of sparse learning problems in dynamical systems
and that is demonstrably both flexible (applying to a range
of examples) and robust (works very well in practice). 
Our work suggests wider deployment of the proposed
methodology, and the need for development of an underpinning
theory. 

 Seeking sparse structure in learning has played a significant role
 in recent decades. Sparse dictionary learning techniques are well known as compressed sensing~\cite{donoho2006compressed,candes2006stable,candes2008enhancing} and have already been extensively studied in application domains
 such as image and signal processing~\cite{bruckstein2009sparse}. The general concept of incorporating sparsity into optimization has also been studied in a variety of applied disciplines for several
 decades, for example in applications in geophysics~\cite{santosa1986linear}. Since then it has  been formulated as a theoretical framework known as LASSO~\cite{tibshirani1996regression,tibshirani2011regression}. In addition, sparsity-promoting techniques have been found useful in emerging areas in artificial intelligence, such as deep learning~\cite{goodfellow2016deep}.

Exploitation of sparsity in the data-driven discovery of differential 
equations was pioneered in a recent series of papers~\cite{brunton2016discovering,rudy2017data,schaeffer2013sparse,schaeffer2017learning}, all of which make the assumption that nature favors simplicity and that the vector fields to be discovered are sparse within a high-dimensional dictionary. More recently, a sparsity-promoting joint outlier detection and model discovery method was proposed in~\cite{tran2017exact}, and a sparsity-promoting method was proposed in~\cite{schaeffer2018extracting} for learning governing equations of dynamical systems from undersampled data. The data-driven discovery of differential equations with sparsity has also been investigated for the learning of stochastic differential equations~\cite{pereira2010learning,boninsegna2018sparse}. These methods need to be provided with,
or need to numerically evaluate, time-series of the time derivative of the state variables, as well as the time-series of state variables themselves.
In this setting, the learning problem may be phrased as an over-determined
system of linear equations, and the solution may be sought through
a regularized least squares approach in which the regularization imposes
sparsity. When numerical differentiation is required, it is susceptible to noise in the time-series of the state variables. Techniques such as total variation regularization~\cite{rudin1992nonlinear} have been adopted to alleviate this issue by denoising the time derivative \cite{chartrand2011numerical}. 
Nonetheless, the presence of noise in time-series presents a significant
issue for these approaches.

One way of circumventing the need to numerically differentiate time-series
is to fit dynamical models to statistics derived from time-series,
such as moments or autocorrelation functions. This idea is widely
used in the study of autoregressive (AR) processes \cite{shaman1988bias,martin1996autoregressive,Neumaier01a, Schneider01c, lutkepohl2013introduction,brockwell1991time} and in the study of SDEs \cite{krumscheid2013semiparametric,krumscheid2015data,kalliadasis2015new,klus2020data,SSW}. There are also a plethora of applied papers that take this
approach, in both discrete and continuous time, such as
\cite{brown1984time,kwasniok2009deriving}. 
In addition to avoiding numerical differentiation, such methods
also have the potential to learn models when only a subset
of the state variables are observed. Furthermore, there are settings
in which only time-averaged data are available. It is important
to note that the parameter-to-data map for such problems is
nonlinear. 

EKI is a general methodology for nonlinear inverse problems described in~\cite{iglesias2013ensemble}, building on algorithms designed
for the solution of inverse problems arising in oil reservoir simulation~\cite{chen2012ensemble,emerick2013ensemble}. 
The incorporation of regularization into EKI is discussed in
\cite{iglesias2016regularizing,chada2019tikhonov},
and the incorporation of constraints in \cite{albers2019ensemble,chada2019incorporation,wu2019adding,zhang2020regularized,strofer2020enforcing}.
In this work, we propose an EKI-based sparsity-promoting methodology for parameter learning. This sparse EKI method combines ideas from
\cite{iglesias2016regularizing,albers2019ensemble} to create a derivative-free optimization approach to parameter learning that enforces sparsity. We apply the method to the learning of vector fields in (possibly stochastic or partial) differential equations, building on the ideas in~\cite{brunton2016discovering}, but using nonlinear indirect measurements defined by time-averaging, rather than linear direct observations. It is a remarkable
property of EKI that, despite the nonlinearity of the observation
operator, the core computational task is the minimization of a 
quadratic objective functional to which a sparsity constraint maybe easily
added, just as it is in the original work on sparse
learning of dynamical systems in~\cite{brunton2016discovering}. 
This fact allows transfer of the learning framework introduced in
\cite{brunton2016discovering} to more complex indirect, nonlinearly and partially observed dynamics, and indeed to a wide range of nonlinear inverse and parameter identification problems.

\subsection{Our Contribution}

Our contributions in this paper are as follows:
\begin{itemize}
    \item We demonstrate how to impose a sparsity constraint within
    the EKI algorithm, by formulating the update step as an $\ell_1$ and/or $\ell_0$ regularized least squares problem.
    \item  We demonstrate the use of sparsity-promoting EKI to discover the governing equations of (possibly stochastic) dynamical systems based on statistics derived from averaging time-series. {The results are compared with those 
obtained using standard EKI to illustrate the merits of imposing sparsity.}
    \item We illustrate the methodology in two simulation studies, discovering the stochastic Lorenz '63 and the
    deterministic Lorenz '96 systems from data, and we also illustrate the methodology to find a closure model for the slow variables within a multiscale Lorenz '96 system.
    \item We illustrate the methodology by discovering coalescence equations for collisional dynamics, using both simulation studies and closure models.
    \item We illustrate the methodology in the context of
    discovering the Kuramoto-Sivashinsky equation from a larger family of
linear dissipative and dispersive linear systems subject to energy conserving
quadratic nonlinearities.
    \item We demonstrate how to impose constraints on the parameter
    learning, which ensure that the subset of parameters that are queried
    during the algorithm all lead to well-posed dynamical systems.
\end{itemize}

Furthermore, although we apply the method in the
context of learning dynamical systems, the EKI-based
sparse methodology may be more
widely applied within nonlinear inverse problems generally.

In Section \ref{sec:PF}, we formulate the inverse problem of interest
and introduce the four problem classes to which we will apply our methodology. Section \ref{sec:Alg} describes the ensemble Kalman-based methodology 
which we employ to solve the inverse problem. It also discusses
the quadratic programming approach we employ to incorporate 
the $\ell_1-$penalty into the ensemble Kalman-based methodology,
and the proximal gradient methodology used to incorporate 
the $\ell_0-$penalty.
In Section \ref{sec:N}, we describe numerical results relating to 
each of the four example problems. We conclude in Section \ref{sec:C}.

\subsection{Notation}

Throughout we use $|\cdot|_{\ell_p}$ to denote the $p-$norm on Euclidean
space, extended to include the case $p=0$, which counts the number of non-zero entries of the vector. The commonly occuring case $p=2$ is simply denoted
by $|\cdot|$, and the notation $|\cdot|_{A}:=|A^{-\frac12}\cdot|$ is used
for symmetric positive-definite $A$.

\section{Problem Formulation}
\label{sec:PF}
The aim of this work is to use time-series data to
learn the right hand side of a differential equation
\begin{equation}
\label{eq:tds}
    \frac{dx}{dt}=f(x),
\end{equation}
where $x \in \mathbb{R}^n$ and $f: \mathbb{R}^n \mapsto \mathbb{R}^n$,
from time-averaged information about $x$. To
this end, we first approximate $f$ with a set of basis functions $\phi=\{\phi_i\}$, $i \in \{1,...,p\}$, leading to a modeled 
differential equation
\begin{equation}
\label{eq:mds}
    \frac{dX_k}{dt}=\sum_{i=1}^p\theta_{ki}\phi_i(X), \quad k=1,...,n,
\end{equation}
where $X \in \mathbb{R}^n$ with components $X_k$, $\phi_i: \mathbb{R}^n \mapsto \mathbb{R}$,
and the parameter matrix $\theta \in \Theta \subseteq \mathbb{R}^{n \times p}$. We assume that, with appropriate choice of the basis functions
$\{\phi_i\}$, the function $X(t)$ provides a good approximation of $x(t)$ 
for some choice of parameter matrix $\theta$; furthermore, we assume that
this choice of $\theta$ is sparse in the sense that $|\theta|_{\ell_0} \ll np.$ We will also consider generalizations
to SDEs and to partial differential equations (PDEs). 

We assume that the data $y$ available to us is in the form
of time-averages of quantities derived from $x(t)$, or linear
transformations of such quantities. This includes
moments, autocorrelation functions, and the power spectral
density. If $x(\cdot), X(\cdot;\theta) \in \cX:=C(\bbR^+,\bbR^n)$ denote solutions of the true and modeled systems, $\Theta$ denotes the subset of
parameter space over which we seek modeled solutions,
and $\cF:\cX \mapsto \bbR^J$ is
a function on the space of solution trajectories, then define $\cG(\theta):= \cF(X(\cdot;\theta)):\Theta \mapsto \bbR^J$. In this work, $\cF$ corresponds to time-averaged functions of solution trajectories $x(\cdot)$.
We focus on solution of the
following inverse problem to determine $\theta$ from $y$:
\begin{equation}
\label{eq:ip}
y=\cG(\theta).
\end{equation}
For simplicity we have assumed independence of $\cG$ on the initial condition 
(and the driving Brownian noise in the SDE case), noting that for ergodic 
problems this dependence indeed disappears when time-averages 
over the infinite time horizon are used.
In practice, a noisy finite-time average is used to generate the
data, and the 
resulting fluctuations may be viewed as small noise around the infinite 
time average, and we will account for this in our algorithms.
The ergodic setting will obtain for most of the examples considered in 
this paper. However, one of the examples we study is not ergodic 
(the coalescence equations), and in that setting we study the 
dependence of our learned parameters on the initial condition.

The formulation in Eq.~\eqref{eq:ip} has the advantage that it does not involve the matching of trajectories $x(t)$ and $X(t)$, a problem that can be
difficult when noise is present in the data (for example
from using finite-time rather than ergodic averages) or when
the trajectory is not differentiable (as arises in SDEs). However 
the approach we adopt has the apparent disadvantage that
the data available may be of small volume. Indeed, it may be the
case that $J \ll np$ --- that is, the amount of data  is far less than the number of 
unknowns. Nonetheless, nature favors simplicity in many cases, and then a 
sparse solution for $\theta$  provides a 
better modeled system than a dense one and can still be identifiable with limited data. Therefore, we aim to solve the inverse problem formulated in Eq.~\eqref{eq:ip} by using a modified version of ensemble Kalman inversion (EKI) that promotes sparsity in $\theta$.

We now describe four examples that will be used to illustrate the methodology.
In all four cases, we demonstrate how to ensure that parameter learning
takes place within a subset of parameter models that lead to well-posed
dynamics. The issue of ensuring this does not arise in the approach of
\cite{brunton2016discovering} because the dynamical system is not simulated
as part of the algorithm. For the EKI approach adopted here, it is integral
to the method that the candidate model problems are simulated for a variety
of parameter values during the learning algorithm, and the
resulting outputs compared with the data available. This ensures that the
candidate parameter values lead to well-posed dynamics. The following four examples will be used in the numerical illustrations in Section \ref{sec:N}. The reader primarily
interested in the form of the algorithm can skip straight to Section \ref{sec:Alg} and return to these examples in conjunction with reading Section \ref{sec:N}.

\begin{example}[{\bf Lorenz 63 System \cite{lorenz1963deterministic}}]
\label{ssec:L63}
The noisy Lorenz equations are a system of three ordinary differential equations taking the form
\begin{equation}
\label{eq:l63}
    \dot{x} = f(x)+\sqrt{\sigmad}\dot{W},
\end{equation}
where $W$ is an $\mathbb{R}^3$-valued Brownian motion, $x=[x_1,x_2,x_3]^\top$, and $f:\mathbb{R}^3 \mapsto \mathbb{R}^3$ is given by
\begin{equation}
\label{eq:fl63}
\begin{aligned}
f_1(x)&=\alpha(x_2-x_1), \\
f_2(x)&=x_1(\rho-x_3)-x_2, \\
f_3(x)&=x_1x_2-\beta x_3.
\end{aligned}
\end{equation}

We will seek a modeled system of the form
\begin{equation}
\label{eq:l63m}
\dot{X_k}=\sum_{i=1}^9\theta_{ki}\phi_i(X)+\sqrt{\sigma}\dot{W}_k, \quad k=1,2,3, \\
\end{equation}
where $\phi=\{\phi_i\}$, $i \in \{1,...,9\}$ contains all the first ($i \in \{1,2,3\}$) and second-order ($i \in \{6,..,9\}$) polynomial basis functions and the $W_k$ are independent $\mathbb{R}$-valued Brownian motions. 

In this setting, the modeled system in Eq.~\eqref{eq:l63m} 
coincides with the true system in Eq.~\eqref{eq:l63} with a proper choice of parameters $\theta$ and noise level $\sigma$. This example thus serves as a simulation study, while also illustrating
the applicability of our sparse discovery method to SDEs, hence going
beyond~\cite{brunton2016discovering}. 

The parameter vector $\theta$ contains 27 unknowns. To ensure
well-posedness of the explored model-class we further impose that
the quadratic terms are energy conserving;
specifically, we enforce that the inner-product of the
quadratic terms with $X$ is identically zero:
\begin{equation}
\label{eq:QC}
\sum_{k=1}^3 \sum_{i=4}^9 X_k \theta_{ki} \phi_i(X) \equiv 0.
\end{equation}
This ensures that the quadratic term contributes zero energy
to the system and is natural from the viewpoint of the
geophysical modelling considerations that underpin the
model. Mathematically, imposition of \eqref{eq:QC} 
ensures that the stochastic 
differential equation does not explode in finite time as it
implies boundedness of the second moment at any fixed  positive time
\cite{mao2007stochastic,mattingly2002ergodicity}. 
The constraints in \eqref{eq:QC} number
$10$, corresponding to removal of  $X_1^3$, $X_2^3$, $X_3^3$, $X_1^2X_2$, $X_1^2X_3$, $X_2^2X_1$, $X_2^2X_3$, $X_3^2X_1$, $X_3^2X_2$, and $X_1X_2X_3$
from the energy.
Consequently, there remain $17$ independent unknown coefficients  in $\theta=\{\theta_{ki}\}$, $k \in \{1,2,3\}$, $i \in \{1,...,9\}$ after incorporating the energy constraint. Our goal is to learn a sparse solution $\theta$ which has less than $17$ non-zero elements, as well as the noise level $\sigma.$ Ideally, in this
simulation study, the learnt solution for $\theta$ will have $7$ non-zero elements that agree with the true system in Eq.~\eqref{eq:l63}, and a value of $\sigma$ which agrees with the
true value $\sigmad.$

\begin{proposition}
\label{prop:1}
Assume that the constraints on parameters $\{\theta_{ki}\}$ are
chosen as detailed above, so as to ensure \eqref{eq:QC} holds. 
Then for any $T>0$, there are constants $c_1, c_2>0$ 
such that  equation \eqref{eq:l63m} has, almost surely, a unique solution satisfying $u=(X_1,X_2,X_3) \in C([0,T];\mathbb{R}^3)$, and
$$\sup_{t \in [0,T]} \bbE|u|^2 \le  \bigl(|u_0|^2+c_1\bigr)e^{c_2T}.\quad\diamond$$
\end{proposition}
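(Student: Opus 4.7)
The plan is to verify the hypotheses of a standard SDE well-posedness result (such as the non-explosion theorem in Mao~\cite{mao2007stochastic}): namely, local Lipschitz continuity of the drift, plus a Lyapunov-type a priori second-moment bound furnished by the energy-conservation constraint \eqref{eq:QC}. First, I would observe that the drift $F$ with components $F_k(X) = \sum_{i=1}^{9} \theta_{ki}\phi_i(X)$ is a polynomial of degree at most two, hence locally Lipschitz, and the diffusion coefficient $\sqrt{\sigma}\,I$ is constant. Standard local existence theory then yields a unique strong solution on a maximal interval $[0,\tau_\infty)$, where $\tau_\infty = \lim_n \tau_n$ and $\tau_n := \inf\{t \geq 0 : |X(t)| \geq n\}$.

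The heart of the proof is to upgrade local to global existence by bounding $\bbE|X(t\wedge \tau_n)|^2$ uniformly in $n$. Decompose $F_k(X) = a_k + \ell_k(X) + q_k(X)$ into its constant, linear, and purely quadratic parts. Applying It\^o's formula to $|X|^2$ gives
$$d|X|^2 = 2\langle X, F(X)\rangle\,dt + 3\sigma\,dt + 2\sqrt{\sigma}\,\langle X, dW\rangle.$$
The energy constraint \eqref{eq:QC} asserts exactly that $\sum_{k=1}^3 X_k q_k(X) \equiv 0$, so the cubic contribution vanishes, leaving
$$\langle X, F(X)\rangle = \langle X, a\rangle + \langle X,\ell(X)\rangle \leq C_1 + C_2 |X|^2,$$
with constants depending only on $\theta$, by Young's inequality on the constant-times-$X$ term and the trivial estimate for the linear-times-$X$ term. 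Without this cancellation, the $|X|^3$ term could drive finite-time explosion; with it, the drift is dissipative in the weak sense needed for Gr\"onwall.

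Substituting into It\^o's formula, stopping at $t\wedge \tau_n$ (so that the martingale term has zero expectation, since $|X|$ is bounded by $n$ on that interval), and taking expectations yields
$$\bbE|X(t\wedge \tau_n)|^2 \leq |u_0|^2 + \int_0^t \bigl(c_1' + c_2\,\bbE|X(s\wedge \tau_n)|^2\bigr)\,ds.$$
Gr\"onwall's inequality produces the bound $\bbE|X(t\wedge \tau_n)|^2 \leq (|u_0|^2 + c_1)\,e^{c_2 T}$ uniformly in $n$. Markov's inequality gives $\bbP(\tau_n \leq T) \to 0$ as $n\to\infty$, hence $\tau_\infty > T$ almost surely; Fatou's lemma passes the second-moment estimate through the limit to give the stated conclusion, and path continuity of the limit in $C([0,T];\mathbb{R}^3)$ follows from the continuity of each approximating solution.

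The main technical obstacle is not conceptual but organizational: making the stopping-time/martingale argument rigorous (including that expectation of the stopped stochastic integral vanishes) and tracking constants through Gr\"onwall so the final bound has the clean exponential form advertised. The essential mathematical content---ruling out explosion of a polynomial-drift SDE---is entirely supplied by the energy-conservation constraint \eqref{eq:QC}.
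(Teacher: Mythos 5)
Your proposal is correct and follows essentially the same route as the paper's proof: apply It\^o's formula to the Lyapunov function $|u|^2$, use the energy constraint \eqref{eq:QC} to eliminate the cubic contribution, bound the remaining terms by Cauchy--Schwarz, and close with a Gr\"onwall argument. The only difference is that the paper delegates the localization/non-explosion machinery (stopping times, vanishing of the stopped martingale, Fatou) to the moment-bound theory in \cite{mao2007stochastic}, whereas you carry it out explicitly; this is a matter of level of detail, not of method.
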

\end{example}

\begin{proof}
Define the Lyapunov function $V(u)=\frac12|u|^2$. Applying
the It\^o formula to $u$ solving \eqref{eq:l63m} gives
$$\frac{d}{dt}\{\bbE V(u)\}=\bbE \sum_{k=1}^3 \sum_{i=1}^9 X_k \theta_{ki} \phi_i(X)+\sigma.$$
(The precise interpretation of this inequality is in time-integrated form).
Applying \eqref{eq:QC} and noting that the $\phi_{i}$ are
linear in $u$ for $i=1,2,3$ leads, after application of the
Cauchy-Schwarz inequality, to the bound $$\frac{d}{dt}\{\bbE V(u)\} \le 
\alpha \bbE V(u)+\sigma$$
for some $\alpha>0$ (again to be interpreted in integrated form).
Integration of the inequality yields the
conclusion of the proposition, by application of the moment
bound theory of It\^o SDEs explained in \cite{mao2007stochastic}.
\end{proof}

\begin{example}[{\bf {Lorenz 96 System~\cite{lorenz1996predictability}}}]
\label{ssec:L96}
The Lorenz 96 single scale system describes the time evolution of a set of variables $\{x_k\}_{k=1}^K$ according to the equations
\begin{equation}
\begin{aligned}
\label{eq:l96ss}
\dot{x}_k&=-x_{k-1}(x_{k-2}-x_{k+1})-x_k+F, \quad k \in \{1,...,K\}, \\
x_{k+K}&=x_k.
\end{aligned}
\end{equation}
We choose $K=36$ and use the system in Eq.~\eqref{eq:l96ss} as the true system for a simulation study. We aim at modeling the unknown tendency with first and second-order polynomial basis functions:
\begin{equation}
\label{eq:this}
\dot{X_k}=\sum_{i=1}^{702}\theta_{ki}\phi_i(X), \quad k=1,\dots, 36\, , \\
\end{equation}
where $\phi=\{\phi_i\}$, $\mid i \in \{1,...702\}$ contains all the first ($\phi=\{\phi_i\}$, $\mid i \in \{1,...,36\}$) and second-order polynomial basis functions. 

As in Example~\ref{ssec:L63}, imposition of energy conserving quadratic nonlinearities is important from
a modeling point of view and as a means to ensure well-posedness, i.e., existence of
solutions to the equation for all time.
To this end, we work with a simpler modeled system, from a subclass of the
models \eqref{eq:this}, taking the form
\begin{equation}
\begin{aligned}
\label{eq:l96ssm}
\dot{X}_k=&-X_{k-1}(\beta_k^{(1)}X_{k-2}-\beta_{k+1}^{(1)}X_{k+1})-(\beta_k^{(2)}X_{k-1}X_{k}-\beta_{k+1}^{(2)}X_{k+1}^2)\\
&-(\beta_k^{(3)}X_{k}X_{k+1}-\beta_{k-1}^{(3)}X_{k-1}^2)-(\beta_k^{(4)}X_{k-1}X_{k+1}-\beta_{k+1}^{(4)}X_{k+1}X_{k+2}) \\
&-\alpha_k X_k+F, \quad k \in \{1,...,K\}, \\
X_{k+K}=&X_{k}.
\end{aligned}
\end{equation}
Thus we only introduce the second-order polynomial basis functions that are constructed by a single variable and its two nearest neighbors, together with a linear diagonal term. This incorporates the energy conservation  constraint --- the inner-product of the quadratic terms with $X$ is identically zero. The boundary conditions for the unknown parameters in Eq.~\eqref{eq:l96ssm} are $\beta_{k+K}^{(i)}=\beta_{k}^{(i)}$ and $\alpha_{k+K}=\alpha_{k}$ for $i \in \{1,2,3,4\}$ and $k \in \{1,...,K\}$. Therefore, we have $144$ unknowns in $\beta$ and $36$ unknowns in $\alpha$. Our goal is to learn a sparse solution $\{\{\beta_k^{(i)}\}_{i=1}^4,\alpha_k\}_{k=1}^{36}$ which has considerably
fewer than $180$ non-zero elements. Ideally, of course, in this simulation 
study setting, the sparse solution will have $72$ non-zero elements that 
agree with the true system in Eq.~\eqref{eq:l96ss}.

\begin{proposition}
\label{prop:2}
Equation \eqref{eq:l96ssm} has unique solution $u=(X_1,\cdots,X_K) \in C([0,\infty);\mathbb{R}^K). \quad \diamond$
\end{proposition}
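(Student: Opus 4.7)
The plan is to combine a standard local existence argument with a Lyapunov-type a priori bound that exploits the energy-conserving structure of the quadratic nonlinearities, exactly as in the proof of Proposition~\ref{prop:1} but in a deterministic and higher-dimensional setting. Since the right-hand side of \eqref{eq:l96ssm} is polynomial in $u=(X_1,\dots,X_K)$ with constant coefficients, it is locally Lipschitz, and the Picard-Lindelöf theorem yields a unique solution in $C([0,\tau);\mathbb{R}^K)$ on some maximal interval $[0,\tau)$. The task is then to show that $|u(t)|$ cannot blow up in finite time, so that $\tau=\infty$.

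To obtain the a priori bound I would set $V(u)=\tfrac12|u|^2$ and compute
\begin{equation*}
\frac{d}{dt}V(u(t))=\sum_{k=1}^K X_k\dot X_k.
\end{equation*}
The key structural input, which is built into the construction of \eqref{eq:l96ssm}, is that the quadratic block of the vector field is energy-conserving: using the periodic boundary conditions $X_{k+K}=X_k$ together with $\beta^{(i)}_{k+K}=\beta^{(i)}_k$, one shifts summation indices so that each cubic monomial $X_jX_\ell X_m$ appearing in $\sum_k X_k\dot X_k$ cancels against a partner coming from a neighbouring term. After this cancellation only the linear and forcing contributions remain, giving
\begin{equation*}
\frac{d}{dt}V(u)=-\sum_{k=1}^K \alpha_k X_k^2+F\sum_{k=1}^K X_k.
\end{equation*}
Bounding the first sum by $(\max_k|\alpha_k|)|u|^2$ and the second by Cauchy-Schwarz and Young's inequality yields $\tfrac{d}{dt}V\le c_2 V+c_1$ for constants $c_1,c_2>0$ depending on $F$, $K$, and the $\alpha_k$. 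Grönwall's lemma then delivers $|u(t)|^2\le (|u_0|^2+c_1)e^{c_2 t}$ on $[0,\tau)$, and a standard continuation argument forces $\tau=\infty$, yielding the claim.

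The only genuine obstacle is the bookkeeping required to check that every cubic monomial in $\sum_k X_k\dot X_k$ does indeed cancel. Each of the four quadratic families in \eqref{eq:l96ssm} is written as a difference $\beta^{(i)}_k(\cdot)-\beta^{(i)}_{k\pm 1}(\cdot)$ specifically so that summing $X_k$ times the quadratic nonlinearity over $k$ gives a telescoping sum that collapses under the periodic boundary condition; one verifies this family-by-family by relabelling $k\mapsto k\pm 1$ and reading off the matching monomials. This is tedious but routine, and mirrors the mechanism already used to justify \eqref{eq:QC} in Example~\ref{ssec:L63}. All remaining ingredients (Picard-Lindelöf, Cauchy-Schwarz, Grönwall, continuation) are standard ODE tools, so no further technical input is needed.
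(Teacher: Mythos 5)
Your proposal is correct and follows essentially the same route as the paper: the Lyapunov function $V(u)=\tfrac12|u|^2$, cancellation of the cubic contributions via the energy-conserving structure of the quadratic terms, a differential inequality of the form $\tfrac{d}{dt}V\le \alpha V+\beta$ obtained via Cauchy--Schwarz, and the observation that the resulting a priori bound rules out finite-time blow-up, which for a finite-dimensional ODE is the only obstruction to global existence. Your explicit telescoping verification and Picard--Lindel\"of/continuation steps simply spell out details the paper leaves implicit.
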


\begin{proof}
Define the Lyapunov function $V(u)=\frac12|u|^2$.
Straightforward computation using \eqref{eq:l96ssm} gives

$$\frac{d}{dt}V(u) \le \alpha V(u)+\beta$$
for some $\alpha, \beta>0$ after using the fact that \eqref{eq:this} holds and using Cauchy-Schwarz. Integration of the inequality yields the conclusion of the proposition
since, for finite dimensional systems, blow-up in finite
time is the only way the solution can cease to exist, and the
bound precludes this. 
\end{proof}

In addition, we also consider a situation in which data are generated
by the \emph{multiscale} Lorenz 96 system:
\begin{equation}
\begin{aligned}
\label{eq:l96ms}
    \dot{x_k}&=-x_{k-1}(x_{k-2}-x_{k+1})-x_k+F-\frac{hc}{J}\sum_{j=1}^J y_{j,k}, \quad k \in \{1,\dots, K\}, \\
    \frac{1}{c}\dot{y}_{j,k}&=-by_{j+1,k}(y_{j+2,k}-y_{j-1,k})-y_{j,k}+\frac{h}{J}x_k, \quad (j,k) \in \{1,\dots, J\} \times \{1,\dots, K\}\\
    &\quad x_{k+K}   = x_k, \quad y_{j,k+K} = y_{j,k}, \quad y_{j+J,k} = y_{j,k+1}.
\end{aligned}
\end{equation}
We will take the values $K=36$ and $J=10$ and work with parameter values
in which the $X$ and $Y$ variables are scale-separated. In this setting,
the averaging principle enables elimination of the $Y$ variables
from the $X$ equation because they are a function of $X$. Thus,
it is natural to try and fit data from the $X$ variable in the multiscale
system Eq.~\eqref{eq:l96ms} to a closed equation in $X$ alone, of the form
\begin{equation}
\begin{aligned}
\label{eq:l96msm}
\dot{X}_k=&-X_{k-1}(\beta_k^{(1)}X_{k-2}-\beta_{k+1}^{(1)}X_{k+1})-(\beta_k^{(2)}X_{k-1}X_{k}-\beta_{k+1}^{(2)}X_{k+1}^2)\\
&-(\beta_k^{(3)}X_{k}X_{k+1}-\beta_{k-1}^{(3)}X_{k-1}^2)-(\beta_k^{(4)}X_{k-1}X_{k+1}-\beta_{k+1}^{(4)}X_{k+1}X_{k+2}) \\
&-\alpha_k X_k+F+g(X_k), \quad k \in \{1,...,K\}, \\
X_{k+K}=&X_{k}.
\end{aligned}
\end{equation}
Comparison with Eq.~\eqref{eq:l96ssm} shows that the only difference is an additional function $g(X_k)$. The averaging principle alone does not justify
the diagonal and universal form of the closure $g(\cdot)$ but
empirical evidence, and arguments based on $J \gg 1$, show that it is
a reasonable closure model to employ, an idea developed in \cite{fatkullin2004computational} and studied further in \cite{BGMS,SSW}. 
We use a hierarchical Gaussian process (GP) with $10$ unknowns to parameterize the function $g(X_k)$, as introduced in~\cite{SSW}, and learn the GP together with unknown parameters $\{\{\beta_k^{(i)}\}_{i=1}^4,\alpha_k\}_{k=1}^{36}$, based on the data from multiscale Lorenz 96 system in Eq.~\eqref{eq:l96ms}. The sparsity constraint is not put on the GP parameters but only on the
$\{\{\beta_k^{(i)}\}_{i=1}^4,\alpha_k\}_{k=1}^{36}.$

\end{example}

\begin{example}[{\bf {Coalescence Equations}}]

Coagulation and fragmentation  equations \cite{ball1986becker,ball1990discrete} for systems of particles or droplets may be found in  the modeling of a wide range of phenomena arising in science and engineering, for example in cloud microphysics \cite{pruppacher1980microphysics,Seifert06a}, or 3D printing \cite{gibson2014additive}. We consider
models in which fragmentation does not occur and refer to the resulting
process as one of coalescence. The transport equations in Eq.~\eqref{eq:ce} below describe the evolution of the coalescence of particles or droplets, by tracking the evolution of the moments $x_k$ of the mass distribution: 
\begin{equation}
\label{eq:ce}
\frac{d x_k}{d t} = \frac{1}{2}\int_0^\infty \int_0^\infty \left((m+m')^k - m^k - {m'}^k\right) \mathcal{C}(m, m')f(m)f(m') \, \text{d}m\, \text{d}m'.
\end{equation}
Here $f(\cdot)$ denotes the mass distribution, and the kernel $\mathcal{C}$ describes the probability of coalescence of two particles or droplets with masses $m$ and $m'$. We employ the polynomial kernel $\mathcal{C}$ defined via non-negative
weights $\{c_{ab}\}$ and the non-negative integer $r$:
\begin{equation}
\label{eq:ce-kernel}
\mathcal{C}(m,m') = \sum_{a,b=0}^{r} c_{ab} m^a{m'}^b. 
\end{equation}

By subsitituting Eq.~\eqref{eq:ce-kernel} into the governing equations~\eqref{eq:ce}, and truncating to consider only moments $k=1,\cdots, K$,
we derive a modeled system to describe the evolution of moments: 
\begin{equation}
\label{eq:cem}
\begin{aligned}
\frac{d X_k}{d t} &=
\frac{1}{2} \sum_{a,b=0}^{r} \sum_{j=1}^{k-1} c_{ab} {k \choose j} X_{a+j} X_{b+k-j}, \quad k=2,3,...,K, \\
\frac{d X_k}{d t} &=
\begin{cases}
-\frac{1}{2} \sum_{a,b=0}^{r} c_{ab} X_a X_b & \quad k=0,\\
0 & \quad k = 1.
\end{cases}
\end{aligned}
\end{equation}
It should be noted that Eq.~\eqref{eq:cem} is not a closed system:
$X_\ell$ for $\ell \in \{K+1,...,K+r-1\}$ are needed in the modeled system. We base the closure model for these higher-order moments on the fitting of a Gamma distribution for $f(\cdot)$.
The resulting moment-based coalescence equation with polynomial kernel and
Gamma distribution closure is proposed in~\cite{BBJ}. 

Since the mass of the system is $X_0$, all
integrals should be normalized by this number to have
the standard probabilistic interpretation. Then, the mean
of this probabilistic distribution is $X_1/X_0$ and the
variance is $X_2/X_0-(X_1/X_0)^2$. If $\kappa$ and $\eta$ are the shape and scale parameters of the Gamma distribution, 
$\kappa\eta$ is the mean and $\kappa\eta^2$ is the variance.
This leads to the following Gamma distribution closure, 
noting that $\Gamma(n)=(n-1)!\,$:
\begin{equation}
\begin{aligned}
\label{eq:gamma}
X_k = X_0\eta^k\frac{\Gamma(\kappa+k)}{\Gamma(\kappa)}, \quad k > K, \\
\kappa = \frac{X_1^2}{X_0X_2-X_1^2}, \quad \eta = \frac{X_2}{X_1}-\frac{X_1}{X_0}.
\end{aligned}
\end{equation}
We study the modeled system in Eq.~\eqref{eq:cem}, 
\eqref{eq:gamma} with $K=2$ and $r=3$; thus, we use the closure
to determine the variables $X_3, X_4$ in terms of the primary
moments $X_0, X_1$ and $X_2$.
Our goal in this example is to learn a sparse solution of coefficients $c_{ab}$ in Eq.~\eqref{eq:cem} based on the data in the following  different settings:

\begin{itemize}
    \item a simulation study where data are generated by and fitted to the model in Eq.~\eqref{eq:cem} with $K=2$ and $r=3$, and with the Gamma distribution closure in Eq.~\eqref{eq:gamma};
    \item data are generated by the model in Eq.~\eqref{eq:cem} with $K>2$ and $r=3$, and we fit a model for $K=2$ and $r=3$, both using the Gamma distribution closure in Eq.~\eqref{eq:gamma};
    \item data are generated by the model in Eq.~\eqref{eq:cem} with $K=2$ and $r=3$ and an exponential closure distribution, and we fit a model for $K=2$ and $r=3$,  with the Gamma distribution closure in Eq.~\eqref{eq:gamma}.
\end{itemize}

For the last bullet we note that the exponential distribution closure has the following form:
\begin{equation}
\begin{aligned}
\label{eq:exp}
X_k &= X_0\frac{k!}{\mu^k}, \quad k > 2, \\
\mu &= \frac{X_0}{X_1},
\end{aligned}
\end{equation}
where $\mu$ denotes the rate parameter of an exponential distribution, chosen to agree with information present
in $X_0$ and $X_1.$

To prevent unphysical responses, we constrain the
parameters $\kappa$ and $\eta$ of the Gamma distribution to prescribed intervals. In so doing, we obtain a closed pair of equations for $(X_0,X_2)$ 
of the form
\begin{equation}
\begin{aligned}
\label{eq:gamma3}
\dot{X}_0=-\frac12 \sum_{a,b=0}^{3} c_{ab} X_a X_b,\\
\dot{X}_2= \sum_{a,b=0}^{3} c_{ab} X_{a+1} X_{b+1},
\end{aligned}
\end{equation}
with $X_1(t) \equiv X_1(0)$ and
\begin{equation}
\begin{aligned}
\label{eq:gamma2}
X_k &= X_0\eta^k\frac{\Gamma(\kappa+k)}{\Gamma(\kappa)}, \quad k=3,4, \\
\kappa' &= \frac{X_1^2}{X_0X_2-X_1^2},\\
\eta' &= \frac{X_2}{X_1}-\frac{X_1}{X_0};\\
\kappa &= \max(\min(\kappa',\kappa_{\max}),\kappa_{\min}), \\
\eta &= \max(\min(\eta',\eta_{\max}),\eta_{\min}).
\end{aligned}
\end{equation}

The moment $X_1$ is a constant which, throughout the simulations
in this paper, is set to be $2$. Furthermore we take $\kappa_{\min}$ and $\kappa_{\max}$ to be $10^{-3}$ and $10$, and $\eta_{\min}$ and $\eta_{\max}$ are set to $10^{-3}$ and $1$.

\begin{proposition}
\label{prop:3}
Let $X_0, X_1, X_2$ be non-negative at $t=0$, assume that  $c_{11}=0$ and that $c_{ab} \ge 0$ for $0 \le a,b \le 4$ and that $0<\kappa_{\min}<\kappa_{\max}<\infty$,
$0<\eta_{\min}<\eta_{\max}<\infty$.
Let $T \in (0,\infty]$ be the first time at which $X_0$
or $X_2$ becomes zero. Then, the equations \eqref{eq:gamma3} and \eqref{eq:gamma2} for $u=(X_0,X_2)$ have a unique solution $u \in C^1([0,T];\mathbb{R}^2).$  $\quad\diamond$
\end{proposition}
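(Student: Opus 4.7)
The plan is to invoke the Picard--Lindelöf theorem for local existence and uniqueness on the open set $\mathcal{O} = \{(X_0, X_2) \in \mathbb{R}^2 : X_0 > 0,\ X_2 > 0\}$, and then extend the solution up to the first exit time from $\mathcal{O}$, using the nonnegativity of the $c_{ab}$ and the boundedness built into the clamped Gamma closure to preclude finite-time blow-up before $T$.

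First I would verify that the vector field $F(X_0, X_2)$ defined by \eqref{eq:gamma3} with closure \eqref{eq:gamma2} is locally Lipschitz on $\mathcal{O}$. The right-hand side is polynomial in $X_0, X_1, X_2, X_3, X_4$ with $X_1$ a fixed positive constant, so the only nontrivial regularity question concerns the closure map $(X_0, X_2) \mapsto (X_3, X_4)$. The clamp $t \mapsto \max(\min(t, t_{\max}), t_{\min})$ is globally Lipschitz on $\mathbb{R}$, and forces $\kappa \in [\kappa_{\min}, \kappa_{\max}]$, $\eta \in [\eta_{\min}, \eta_{\max}]$, both bounded away from zero. Since $\kappa_{\min} > 0$, the Gamma ratios $\Gamma(\kappa + k)/\Gamma(\kappa)$ for $k = 3, 4$ are smooth and bounded on $[\kappa_{\min}, \kappa_{\max}]$. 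Combined with the local smoothness of the pre-clamp rational functions $\kappa'$ and $\eta'$ on $\mathcal{O}$ away from the singular locus $\{X_0 X_2 = X_1^2\}$, this yields local Lipschitz dependence of $X_3$ and $X_4$, and hence of $F$, on $(X_0, X_2) \in \mathcal{O}$.

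Picard--Lindelöf then furnishes a unique $C^1$ local solution on a maximal interval $[0, T_{\max})$, and I would identify $T_{\max}$ with $T$ by ruling out finite-time blow-up. Because $c_{ab} \ge 0$ and $c_{11} = 0$, equation \eqref{eq:gamma3} gives $\dot{X}_0 \le 0$, so $X_0$ is non-increasing and bounded by $X_0(0)$. The clamping bounds on $\kappa$ and $\eta$ then yield $X_3, X_4 \le C X_0(0)$ for a constant $C$ depending only on the clamping bounds and $X_1$, so the right-hand side of the $X_2$ equation is bounded uniformly in time and $X_2$ grows at most linearly, precluding blow-up. Hence $T_{\max} = T$, and continuity of $F$ at the boundary (since $X_3, X_4 \to 0$ as $X_0 \to 0$) yields $u \in C^1([0, T]; \mathbb{R}^2)$.

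The chief obstacle is the apparent jump of the clamped $\kappa$ across the locus $\{X_0 X_2 = X_1^2\}$, where $\kappa'$ diverges with opposite signs on the two sides. However, $\kappa$ saturates to $\kappa_{\max}$ on the side $X_0 X_2 > X_1^2$ and to $\kappa_{\min}$ on the opposite side in a neighborhood of this locus, so $F$ remains bounded globally and is Lipschitz on each side. For physically meaningful initial data the Cauchy--Schwarz moment inequality $X_0 X_2 \ge X_1^2$ holds, and one can check via the sign structure of $\tfrac{d}{dt}(X_0 X_2 - X_1^2)$ that this inequality is preserved by the flow, so the trajectory stays on one side of the singular locus on $[0, T]$ and the standard Lipschitz theory above applies without requiring Filippov-type arguments at the interface.
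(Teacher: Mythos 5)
Your overall skeleton (Picard--Lindel\"of local existence plus a priori bounds to continue the solution up to $T$) is the same as the paper's, but your no-blow-up step contains an error. From $X_3,X_4\le C X_0(0)$ it does \emph{not} follow that the right-hand side of the $X_2$ equation in \eqref{eq:gamma3} is bounded uniformly in time: it still contains the terms $c_{01}X_1X_2$, $c_{12}X_2X_3$, $c_{13}X_2X_4$ (and their symmetric counterparts), which grow linearly in $X_2$. The point of the hypothesis $c_{11}=0$ --- which you invoke for $\dot X_0\le 0$, where it is not needed, and do not use where it is needed --- is precisely that no $X_2^2$ term appears, so the right-hand side is affine in $X_2$ with coefficients controlled by the clamp bounds and $X_0(0)$; a Gronwall-type linear differential inequality then precludes blow-up on $[0,T]$. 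This is exactly the paper's argument, phrased with $V(X_2)=X_2^2$ and $\frac{d}{dt}V\le\alpha+\beta V$. The conclusion survives, but your claim ``bounded uniformly in time, $X_2$ grows at most linearly'' is false as stated.

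The second, more serious gap is your treatment of the discontinuity of the clamped closure across the locus $\{X_0X_2=X_1^2\}$, where $\kappa$ jumps between $\kappa_{\max}$ and $\kappa_{\min}$. You dispose of it by asserting that $X_0X_2\ge X_1^2$ is preserved by the flow, ``checked via the sign structure'' of $\frac{d}{dt}(X_0X_2-X_1^2)$; this is not established and fails as a pointwise statement on the locus. For instance, with only $c_{23}=c_{32}>0$ one computes $\frac{d}{dt}(X_0X_2)=c_{23}X_3\bigl(2X_0X_4-X_2^2\bigr)$, and on the locus $X_2=X_1^2/X_0$ while $X_4\le \mathsf{c}^{-1}X_0$, so the bracket is negative whenever $X_0$ is small: the vector field there points out of $\{X_0X_2\ge X_1^2\}$, so no sign argument of this kind yields forward invariance. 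Moreover the proposition assumes only nonnegativity of the initial data, not $X_0X_2\ge X_1^2$, so restricting to ``physically meaningful'' initial data adds a hypothesis not in the statement. You have in fact identified a subtlety that the paper's proof does not engage with at all --- the paper uses only the clamp-implied bounds $\mathsf{c}X_0\le X_3,X_4\le \mathsf{c}^{-1}X_0$ together with the a priori estimates, and is silent on the regularity of the closure map across this locus --- but your proposed resolution (the trajectory stays on one side, so standard Lipschitz theory suffices) is not justified as it stands; closing the argument along your route would require either a genuine invariance proof under extra hypotheses or a Filippov-type treatment at the interface.
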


\begin{proof}
Recall that $X_1$ is constant in time. We consider solutions
only in the time-interval $[0,T].$
The imposed upper and lower bounds on $\kappa$ and $\eta$
ensure that the closure model has the property that there
is a universal constant $\mathsf{c} \in (0,1)$ such that, whilst a solution
to the equations for $X_0,X_2$ exists (and necessarily 
remains non-negative) in $[0,T]$,
$$\mathsf{c}X_0 \le X_3 \le \mathsf{c}^{-1}X_0, \quad
\mathsf{c}X_0 \le X_4 \le \mathsf{c}^{-1}X_0.$$
It follows from the equation for $X_0$ that, for $t \in [0,T],$
$$0 \le X_0(t) \le X_0(0)$$
since all quantities on the right hand-side of the equation for $\dot{X}_0$ in \eqref{eq:gamma3} are negative.
Now note that the right hand side of the equation for $\dot{X}_2$ in \eqref{eq:gamma3} contains no quadratic terms
in $X_2$, since $c_{11}=0$, and that the constant
term and linear coefficient (with respect to $X_2$) on the
right hand side are, for $t \in [0,T],$ bounded. Multiplying
the equation by $X_2$ shows that $V(X_2)=|X_2|^2$ satisfies
$$\frac{d}{dt}V(X_2) \le \alpha+\beta V(X_2)$$
with $\alpha, \beta$ determined by the initial conditions
and $\mathsf{c}.$ Hence $X_2$ cannot blow-up in $[0,T]$ and the result is proved.
\end{proof}

The total number of unknowns to be learned is thus $9$, after
imposing symmetry on $c_{ab}$ and setting $c_{11}$ to zero.
We also have a positivity constraint on
all of the unknowns. Adding a further constraint $c_{22}=0$ can be used
to prevent $X_0$ from becoming negative, thereby extending the preceding
proposition to hold for all $t \ge 0.$ In practice, however, we find
that the sparse solution always enforces $c_{22}=0$ and that we do not
need to impose it.
\end{example}

\begin{example}[{\bf {Kuramoto-Sivashinsky Equation}}]
Let $\mathbb{T}^L$ denote the torus $[0,L]$ and consider the equation
\begin{equation}
\begin{aligned}
\label{eq:need1}
\partial_t u&=-\partial^4_x u-\partial^2_x u-u \partial_x u, \quad x\in \mathbb{T}^L,\\
u|_{t=0}&=u_0.
\end{aligned}
\end{equation}
We are interested in learning this model from a library of
equations of the form
\begin{equation}
\begin{aligned}
\label{eq:need2}
\partial_t u&=-\sum_{j=1}^5 \Bigl(\alpha_j \partial^j_x u +
\beta_j u^{j} \partial_x u\Bigr),\quad x\in \mathbb{T}^L,\\
u|_{t=0}&=u_0.
\end{aligned}
\end{equation}
Thus, we have 10 unknowns. We wish to constrain the model so that solutions do not blow up
in finite time. To do this, we ensure that $\|u\|_{L^2(\mathbb{T};\mathbb{R})}$ remains bounded. Note that the nonlinear
terms disappear when multiplied by $u$ and integrated over
$\mathbb{T}^L$, because of periodicity. On the other hand, the linear term will damp all high spatial frequencies, uniformly in wave-number sufficiently
large, provided that $\alpha_4>0.$ Thus we have:

\begin{proposition}
\label{prop:4}
Let $\alpha_4>0$. Then there is constant $\mathsf{c}>0$ 
such that the solution of equation \eqref{eq:need2}, if it
exists as a function $u \in C([0,T];L^2(\mathbb{T};\mathbb{R}))$, satisfies
$$\sup_{t \in [0,T]} \|u\|^2_{L^2(\mathbb{T};\mathbb{R})} \le \|u_0\|^2_{L^2(\mathbb{T};\mathbb{R})}e^{\mathsf{c}T}.\quad\diamond$$
\end{proposition}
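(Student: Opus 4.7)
The plan is to derive an energy estimate for the squared $L^2$ norm of $u$ by multiplying Eq.~\eqref{eq:need2} by $u$ and integrating over $\mathbb{T}^L$. Since the proposition presupposes existence in $C([0,T];L^2(\mathbb{T};\mathbb{R}))$, we only need an a priori bound on this norm; integration by parts may then be carried out freely using the periodic boundary conditions.

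First I would dispose of the nonlinear contribution. Each term $\beta_j u^{j}\partial_x u$ can be rewritten as $\tfrac{\beta_j}{j+1}\partial_x (u^{j+1})$, so that
$$
\int_{\mathbb{T}^L} u \cdot \beta_j u^{j}\partial_x u\,dx = \frac{\beta_j}{j+2}\int_{\mathbb{T}^L} \partial_x(u^{j+2})\,dx = 0
$$
by periodicity. Next I would handle the linear terms $\alpha_j \partial_x^j u$. Repeated integration by parts gives $\int u\,\partial_x^j u\,dx = 0$ for odd $j$, while for the even derivatives one obtains $\int u\,\partial_x^2 u\,dx = -\|\partial_x u\|_{L^2}^2$ and $\int u\,\partial_x^4 u\,dx = \|\partial_x^2 u\|_{L^2}^2$. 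Assembling these contributions yields
$$
\frac{1}{2}\frac{d}{dt}\|u\|_{L^2}^2 = \alpha_2\|\partial_x u\|_{L^2}^2 - \alpha_4\|\partial_x^2 u\|_{L^2}^2,
$$
again to be interpreted in time-integrated form.

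The main step (and the only place where any nontrivial inequality is used) is to control the indefinite middle term $\alpha_2\|\partial_x u\|^2$ by the dissipative term and a multiple of $\|u\|^2$. I would argue in Fourier: expanding $u(x)=\sum_k \hat u_k e^{2\pi i k x/L}$ and writing $\xi_k = (2\pi k/L)^2 \ge 0$, the right-hand side becomes
$$
L\sum_k \bigl(\alpha_2 \xi_k - \alpha_4 \xi_k^2\bigr)|\hat u_k|^2.
$$
Because $\alpha_4>0$, the quadratic $\alpha_2 \xi - \alpha_4 \xi^2$ attains its maximum over $\xi\ge 0$ at $\xi_\star = \alpha_2/(2\alpha_4)$ (or at $\xi=0$ if $\alpha_2\le 0$), with value at most $\alpha_2^2/(4\alpha_4)$. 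Setting $\mathsf{c}:=\alpha_2^2/(2\alpha_4)$ and using Parseval's identity therefore gives
$$
\frac{d}{dt}\|u\|_{L^2}^2 \le \mathsf{c}\,\|u\|_{L^2}^2,
$$
and Gronwall's inequality yields the claimed exponential bound.

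The only real obstacle is the handling of the $\alpha_2$ term, since it enters with the ``wrong'' sign; the completion-of-the-square argument in Fourier shows cleanly that the fourth-order dissipation is strong enough (uniformly in wavenumber) to absorb it up to a zeroth-order remainder. All remaining steps, in particular the vanishing of the nonlinear contributions and of the odd-order linear terms, are standard integration-by-parts computations that rely only on periodicity.
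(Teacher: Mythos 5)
Your proof is correct and follows essentially the same route as the paper: an $L^2$ energy estimate in which the nonlinear terms and odd-order linear terms vanish by periodicity, and the indefinite $\alpha_2$ term is absorbed by the $\alpha_4$ dissipation through the spectrum (Fourier symbol) of $Lu=\alpha_2\partial_x^2u+\alpha_4\partial_x^4u$, followed by Gronwall. The only difference is that you carry out explicitly, with the explicit constant $\mathsf{c}=\alpha_2^2/(2\alpha_4)$, the computation the paper leaves as a ``straightforward calculation based on the spectrum of $L$.''
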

\end{example}

\begin{proof}
Let
$$V(u)=\frac12\|u\|^2_{L^2(\mathbb{T};\mathbb{R})}$$
Using periodicity we note that
\begin{equation*}
\int_{\mathbb{T}} u
\times \beta_j u^{j} \partial_x u\,dx=0
\end{equation*}
and that, for $j$ odd,
\begin{equation*}
\int_{\mathbb{T}} u
\times \alpha_j \partial_x^j u\,dx=0.
\end{equation*}
Thus we obtain, provided a solution exists,
$$\frac{d}{dt}{V(u)} \le -\alpha_2\int_{\mathbb{T}} u \times 
\partial_x^2 u\,dx - \alpha_4\int_{\mathbb{T}} u \times 
\partial_x^4 u\,dx.$$
A straightforward calculation based on the spectrum of
the operator $L$ defined by
$Lu:=\alpha_2 \partial_x^2 u+\alpha_4 \partial_x^4 u$
on $\mathbb{T}$ shows that there is constant $\mathsf{c}>0$ such that
$$\frac{d}{dt}{V(u)}= -\int_{\mathbb{T}} u \times Lu\, dx \le  \mathsf{c} V(u),$$
and the proof is complete.
\end{proof}

In summary we have 10 unknowns, and a positivity constraint on one of the unknowns. 
Although we ensure that the $\ell_2$-norm of the simulated state remains bounded, it is still possible that the simulated state may blow-up due to numerical discretization. Therefore, we also implement numerical clipping to bound the simulated state at every time step. Details concerning the numerical
solution of the K-S equation, including the clipping used, are presented in~\ref{sec:KS_solver}.

\section{Algorithms}
\label{sec:Alg}

Recall the inverse problem of interest, encapsulated in \eqref{eq:ip}.
In practice the data we are given, $y \in \bbR^J$, is a noisy evaluation of the function $\cG(\theta)$. We use the notation $G(\theta)$ to denote this noisy
evaluation, and we typically envision the noisy evaluation coming
from finite-time averaging. Appealing to central-limit theorem type results,
which quantify rates of convergence towards ergodic averages, we assume that
$\cG(\theta)-G(\theta)$ is Gaussian. Then the inverse problem can be formulated 
as follows: given $y \in \bbR^J$, find $\theta \in \Theta$ so that
\begin{equation}
\label{eq:IP}
y=G(\theta)+\eta, \quad \eta \sim N(0,\Gamma).
\end{equation}
The natural objective function associated with the inverse
problem \eqref{eq:IP} is
\begin{equation}
\label{eq:ob}
\frac12\bigl|\Gamma^{-\frac12}\bigr(y-G(\theta)\bigr)\bigr|^2.
\end{equation}

We study the use of EKI based methods for solving
this inverse problem. The four primary reasons for using EKI to solve the inverse problem are: (a) EKI does not require derivatives, which are difficult to compute in this
setting of SDEs; nonetheless, EKI provably,
in the linear case \cite{schillings2017analysis}, and approximately, in the nonlinear case \cite{garbuno2020interacting} behaves like
a gradient descent with respect to objective \eqref{eq:ob}, projected into a finite dimensional space defined by the ensemble; 
(b) EKI is robust to noisy evaluations of the forward map as shown in \cite{DSW}. (c) EKI is inherently parallelizable and scales well to high-dimensional unknowns \cite{Kalnay}. (d) EKI lends itself 
naturally to the imposition of constraints on the unknown 
parameter \cite{albers2019ensemble}.  
The primary novelty of the approach proposed in this paper
is the demonstration that imposition
of sparsity within EKI is something that can be achieved easily
and that enables generalization of the approach pioneered
in \cite{brunton2016discovering} to settings in which the
observations are nonlinear and partial and in which the
dynamical system comes from an SDE. Such problems lead to
the parameter learning of $\theta$ from $y$ related by \eqref{eq:IP}.

In subsection \ref{ssec:EKI}, we recap the basic EKI algorithm to fit unknown parameters. In particular, we demonstrate how the
iterative algorithm, which is nonlinear, has at its core
a quadratic optimization problem. In subsection \ref{ssec:sEKI}, we describe a method of inducing sparsity within the EKI algorithm, by introducing an $\ell_1$ constraint and/or an $\ell_0$-type penalty 
on top of the core optimization task solved by the basic EKI algorithm. In subsection \ref{ssec:qp-lasso}, we describe how we reformulate the sparsity inducing step of the algorithm as a standard quadratic programming problem.

There are many variants on the precise manner in which sparsity constraints
are imposed, and the algorithms used to solve the resulting optimization
problems. Our purpose is not to determine the best way to impose
sparsity or to the best way to solve the optimization problems: these
are well-studied problems and we simply employ some successful approaches
to their resolution. Rather, our purpose is to
demonstrate that the EKI approach to parameter learning is easily
extended to incorporate sparisty constraints because its core
is solution of a quadratic optimization problem. 

\subsection{Ensemble Kalman Inversion}
\label{ssec:EKI}

The ensemble Kalman inversion (EKI) algorithm that we employ to solve the inverse problem in \eqref{eq:IP} is described
in~\cite{iglesias2013ensemble,albers2019ensemble}. First, we introduce a new variable $w=G(\theta)$ and variables $v$ and $\Psi(v)$:
\begin{equation}
\begin{aligned}
v&=(\theta,w)^\top, \\
\Psi(v)&=\left(\theta,G(\theta)\right)^\top.
\end{aligned}
\end{equation}
Using these variables we formulate the following noisily observed dynamical system:
\begin{equation}
\begin{aligned}
v_{m+1}&=\Psi(v_m) \\
y_{m+1}&=Hv_{m+1}+\eta_{m+1}.
\end{aligned}
\end{equation}
Here $H=[0,I], H^\perp=[I,0]$, and hence $Hv=w, H^\perp v=\theta.$ 
In this setting, $\{v_m\}$ is the state and $\{y_m\}$ are the
data. The objective is to estimate $H^\perp v_m=\theta_m$ from $\{y_\ell\}_{\ell=1}^{m}$ and to do so iteratively with respect to $m$. In practice we only have one data point
$y$ and not a sequence $y_m$; we address this issue in what follows below.

The EKI methodology creates an ensemble  $\{v_{m}^{(j)}\}_{j=1}^J$ defined iteratively  in $m$ as follows:
\begin{equation}
\label{eq:optim-EKI}
\begin{aligned}
    J_m^{(j)}(v):=&\frac12\big|y^{(j)}_{m+1}-Hv\big|^2_{\Gamma}+\frac12\big|v-\Psi\bigl({v}_{m}^{(j)}\bigr)\big|^2_{C^{\Psi\Psi}_{m}},\\
    v_{m+1}^{(j)}=&\argmin_v J_m^{(j)}(v).
\end{aligned}
\end{equation}
The matrix $C^{\Psi\Psi}$ is the  empirical covariance of $\{\Psi(v_m^{(j)})\}_{j=1}^J$. The data $y^{(j)}_{m+1}$ is either fixed
so that $y^{(j)}_{m+1}\equiv y$
or created by adding random draws to $y$ from the distribution of
the $\eta$, independently for all $m$ and $j$.
At each step, $m$ ensemble parameter estimates 
indexed by $j=1,\cdots, J$
are found from $\theta_{m}^{(j)}=H^\perp v_{m}^{(j)}.$

Using the fact that $v=(\theta,w)^T$, the minimizer 
$v_{m+1}^{(j)}$ in \eqref{eq:optim-EKI} 
decouples to give the update formulae
\begin{equation}
\label{eq:EKI}
\theta_{m+1}^{(j)}=\theta_{m}^{(j)}+C_m^{\theta G}\left(C_m^{GG}+\Gamma \right)^{-1}\left(y_{m+1}^{(j)}-G(\theta_m^{(j)}) \right),
\end{equation}
 the matrix $C_m^{GG}$ is the empirical covariance of $\{G(\theta_m^{(j)})\}_{j=1}^J$,
while matrix $C_m^{\theta G}$ is the empirical cross-covariance of 
$\{\theta_m^{(j)}\}_{j=1}^J$ with $\{G(\theta_m^{(j)})\}_{j=1}^J$.
Details of the derivation may be found 
in \cite{iglesias2013ensemble,albers2019ensemble}. 
The algorithm preserves the linear span of the initial ensemble $\{\theta_0^{(j)}\}_{j=1}^J$ 
for each $m$ and thus operates in a finite dimensional vector space, 
even if $\Theta$ is an infinite dimensional vector space.

\subsection{Sparse Ensemble Kalman Inversion (EKI)}
\label{ssec:sEKI}

We are interested in finding a sparse solution  $\theta$ of the inverse problem in \eqref{eq:IP}, building on the key features (a)--(d)
possessed by EKI and outlined in the preamble to this section. To impose sparsity on the solution of $\theta$ from EKI, we replace the step \eqref{eq:optim-EKI} with the step
\begin{equation}
\label{eq:optim-EKI-lasso}
\begin{aligned}
    J_m^{(j)}(v,\lambda):=&\frac12\big|y^{(j)}_{m+1}-Hv\big|^2_{\Gamma}+\frac12\big|v-\Psi\bigl({v}_{m}^{(j)}\bigr)\big|^2_{C^{\Psi\Psi}_{m}}+\lambda|H^\perp v|_{\ell_0},\\
    v_{m+1}^{(j)}=&\argmin_{v \in \cV} J_m^{(j)}(v),
\end{aligned}
\end{equation}
where
\begin{equation}
\label{eq:cset1}
\cV=\{v: |H^\perp v|_{\ell_1} \le \gamma\}.
\end{equation}
On occasion we will also impose positivity constraints on some
of the parameters and will then choose, for some matrix $A$,
\begin{equation}
\label{eq:cset2}
\cV=\{v: |H^\perp v|_{\ell_1} \le \gamma, \,AH^\perp v \ge 0\}.
\end{equation}
The parameters $\gamma$ and $\lambda$ may be adjusted and indeed
could be learned via cross-validation. 
To solve the resulting optimization problem, we alternate minimization of \eqref{eq:optim-EKI-lasso} for $\lambda=0$, which
approximates a gradient descent step for \eqref{eq:ob} subject to
an $\ell_1$ constraint, with a proximal gradient step on the $|\cdot|_{\ell_0}$ norm, projected into the
$\ell_1$ constraint set; however, the latter cannot
leave either of the constraint sets \eqref{eq:cset1} or \eqref{eq:cset2}, and so reduces to a simple thresholding. 
To this end, we introduce the function $\mathcal{T}$ on vectors defined by
 \begin{equation}
 \label{eq:threshold}
    \mathcal{T}(\theta_i)=
    \begin{cases}
      0, & \text{if}\ |\theta_i|<\sqrt{2\lambda} \\
      \theta_i, & \text{otherwise}
    \end{cases}
  \end{equation}
With this definition, we arrive Algorithm \ref{alg:a} below.

\begin{algorithm}
\caption{Sparse EKI algorithm}
\begin{algorithmic}[1]
\State Choose $\{\theta_m^{(j)}\}_{j=1}^J$ for $m=0$
\State $\{w_m^{(j)}=G(\theta_m^{(j)})\}_{j=1}^J$
\For{$j=1,2,...,J$}
        \State $v_{m+1}^{(j)} \leftarrow$ argmin of \eqref{eq:optim-EKI-lasso} with $\lambda=0$
        \State Extract $\theta_{m+1}^{(j)}=H^\perp v_{m+1}^{(j)}$
\State $\theta_{m+1}^{(j)}=\mathcal{T}(\theta_{m+1}^{(j)})$
\EndFor
\State $m \leftarrow m+1$, go to 2
\end{algorithmic}
\label{alg:a}
\end{algorithm}

We note that taking 
$\gamma=\infty$ results simply in an $\ell_0$ penalty and
alternation of standard EKI (which, recall, behaves like
a step of gradient descent) with a hard-thresholding algorithm. On 
the other hand, taking $\lambda=0$ results in a modification of
EKI that promotes smaller $\ell_1-$norm solutions.
 In the next subsection, we give details about how to formulate the optimization problem Eq.~\eqref{eq:optim-EKI-lasso} with
 $\lambda=0$ as a standard quadratic programming problem, rendering
 the preceding algorithm not only implementable, but efficient.
 
In practice, the coefficients $\theta^{(j)}$ identified by a single sparsity-promoting optimization, such as Algorithm \ref{alg:a}, will
exhibit bias. To enhance the performance of identifying the coefficients $\theta^{(j)}$, it is sometimes useful to run the sparse EKI Algorithm~\ref{alg:a} in multiple batches, removing unnecessary basis functions in each batch, until the number of basis functions cannot be further reduced. Similar concepts, employing multiple optimizations sequentially, are also advocated in~\cite{brunton2016discovering,schaeffer2017learning}. More specifically, iteratively thresholded least squares optimization is recommended in~\cite{brunton2016discovering}, that is, iteratively solving the least squares optimization on reduced basis functions identified by the optimization in
the previous step. On the other hand, a second least squares optimization restricted to the features identified from the original $\ell_1$ penalized least squares optimization is recommended in~\cite{schaeffer2017learning}.
There is also theoretical work related to debiasing the output of
sparse solution algorithms; see \cite{javanmard2018debiasing}. However
our approach is more closely linked to the ad hoc approaches advocated
in \cite{brunton2016discovering,schaeffer2017learning}.

\subsection{Quadratic Programming with $\ell_1$ Penalty}
\label{ssec:qp-lasso}
The objective function in \eqref{eq:optim-EKI-lasso} with
$\lambda=0$ can be rewritten (neglecting constants in $v$) as, for
$C_m=C_m^{\Psi\Psi}$,
\begin{equation}
\label{eq:optim-EKI-qp}
\frac{1}{2}v^\top\left(H^\top\Gamma^{-1}H+C_{m}^{-1}\right)v-
\left(C_{m}^{-1} \Psi(v_m^{(j)})+H^\top \Gamma^{-1} y^{(j)} \right)^\top v.
\end{equation}
We wish to minimize over $\cV$ defined
in \eqref{eq:cset2} (the case \eqref{eq:cset1} may be extracted
from what follows simply by setting $A=0$).
By appropriate definition of $Q$
and $q$, we may write the resulting minimization problem as
\begin{equation}
\label{eq:optim-l1}
\begin{aligned}
    \min_{v} \quad & \frac{1}{2}v^\top Q v + q^\top v \\
    \textrm{s.t.} \quad & AH^\perp v \geq 0, \, 
     |H^\perp v\big|_{\ell_1} \le \gamma.
\end{aligned}
\end{equation}
The following decomposition as described in \cite{tibshirani1996regression} can be employed to convert \eqref{eq:optim-l1} into the standard form of quadratic programming:
introduce variables
\begin{equation}
\begin{aligned}
v_i &= v_i^+ - v_i^-, \\
|v_i| &= v_i^+ + v_i^-,
\end{aligned}
\end{equation}
where $v_i^+ \geq 0$ and $v_i^- \geq 0$ denote the positive and negative part of the $i^\textrm{th}$ element of $v$, respectively. This decomposition leads to the following minimization problem:
\begin{equation}
\begin{aligned}
    \min_{v^+, v^-} \quad & \frac{1}{2}\left(v^+-v^-\right)^\top Q \left(v^+-v^-\right) + q^\top \left(v^+-v^-\right)\\
    \textrm{s.t.} \quad & AH^\perp\left(v^+-v^-\right) \geq a, \,
    H^\perp \left( v^+ + v^-\right) \le \gamma,\, v^+ \ge 0,\, v^- \ge 0.
\end{aligned}
\end{equation}
If we define the augmented vector $u^\top =\left[v^+,v^-\right]
\in \mathbb{R}^{2z}$, we see that the problem takes the form of a standard quadratic programming problem; alternatively one may work with the variable $u^\top=\left[\left(v^+-v^-\right)^\top, \left(v^+ + v^-\right)^\top \right] \in \mathbb{R}^{2z}$.

\begin{remark}{
It should be noted that the quadratic programming problem arising here
by application of the approach pioneered in \cite{tibshirani1996regression}
is a classic well-studied problem; we solve it using a state-of-the-art 
package~\cite{andersen2011interior}.
In addition to the approach we adopt to imposing sparsity, there are 
other methods that solve convex optimization problems subject to
$\ell_1$ regularization or penalty, such as in \eqref{eq:optim-EKI-lasso}. 
These include the alternating direction method of multipliers 
(ADMM)~\cite{boyd2011distributed} and split Bregman 
method~\cite{goldstein2009split}. Our formulation of the problem in 
\eqref{eq:optim-EKI-lasso} as a standard quadratic programming problem 
has the benefit that other equality and/or inequality constraints
are readily imposed on the EKI methodology,
along with the $\ell_1$ regularization of penalization, as explained 
in~\cite{albers2019ensemble}.}
\end{remark}

\begin{remark}
In some applications, it may be of interest to impose sparsity only
on a subset of the parameters $\theta$. The modification required
to do this is straightforward and so we do not detail it here. Such
a modification is employed in the next section when we learn a 
closure model for the Lorenz 96 multiscale equations.
\end{remark}

\section{Numerical Results}
\label{sec:N}
We demonstrate the capability of the proposed methodology by studying the four examples introduced in Section \ref{sec:PF}. In all cases, the unknown parameters
are detailed in Section~\ref{sec:PF} and the data used to learn
them are detailed in what follows. For the noisy Lorenz 63 system, we use the Euler-Maruyama method to solve the It\^o SDEs. For  the
Lorenz 96 systems, we use an adaptive numerical integrator~\cite{petzold1983automatic,hindmarsh1983odepack} that automatically chooses between the nonstiff Adams method and the stiff BDF method. For the coalescence equation, we use the fourth-order Runge--Kutta method. The numerical integrator for Kuramoto-Sivashinsky equation is presented in~\ref{sec:KS_solver}. 
In all cases, the results are initially presented in two figures, one showing the ability of the sparse EKI method to fit the data, and a second showing that the proposed methodology indeed provides a sparse solution in terms of the $\ell_1$ norm of redundant coefficients. For the canonical chaotic systems, we then show how well the fitted dynamical system performs in terms of reproducing the invariant measure and time correlation. For the coalescence equation, we show how well the fitted dynamical system performs in terms of reproducing the time trajectories of states with a different initial condition. For the Kuramoto-Sivashinsky equation, we present the results of an additional sparse EKI with reduced basis functions identified by the first sparse EKI. All these numerical studies confirm that the sparsity-promoting EKI is able to discover the governing equations of dynamical systems based on statistics derived from averaging time series.

\subsection{Lorenz 63 System}
\label{ssec:NL63}

We first study a noisy Lorenz 63 system for which the data are obtained by simulating \eqref{eq:l63}, with a given set of parameters $\alpha=10$, $\rho=28$, $\beta=8/3$, and $\sigma=10$. The goal is to fit a modeled system \eqref{eq:l63m} by learning unknown coefficients $\theta_{ki}$ and $\sigma$. In this study, $\phi=\{\phi_i \mid i \in \{1,...,9\}\}$ contains all the first ($\phi=\{\phi_i \mid i \in \{1,2,3\}\}$) and second order polynomial basis functions. It is well known that the existing sparsity-promoting model discovery frameworks such as SINDy~\cite{brunton2016discovering} would encounter some difficulties for such a system due to noise in the time trajectories. We show that the sparse EKI is able to learn a noisy chaotic system based on statistics derived from averaging time series. Results are presented in Figs.~\ref{fig:G_nL63} to~\ref{fig:autocorr_nL63}.

\begin{figure}[!htbp]
  \centering
  \includegraphics[width=0.2\textwidth]{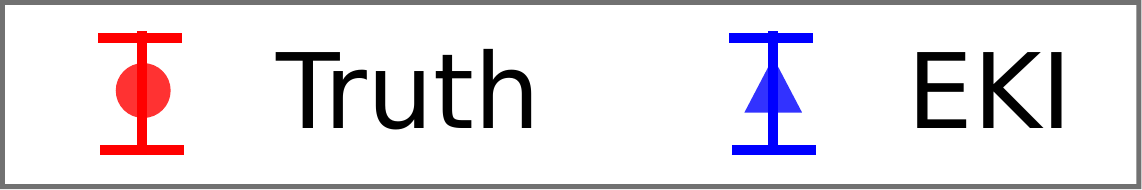}
  \subfloat[Standard EKI]{\includegraphics[width=0.49\textwidth]{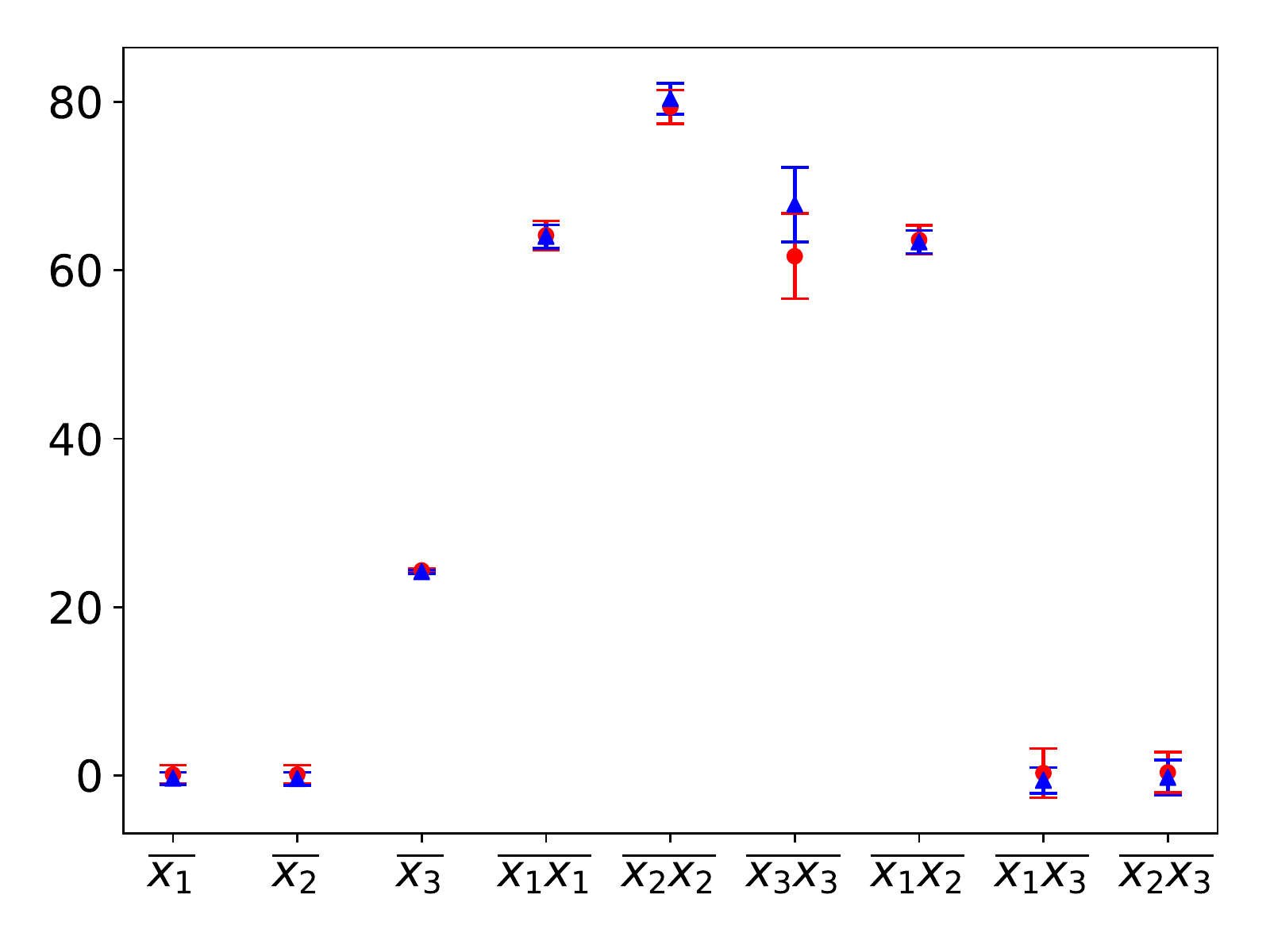}}
  \subfloat[Sparse EKI]{\includegraphics[width=0.49\textwidth]{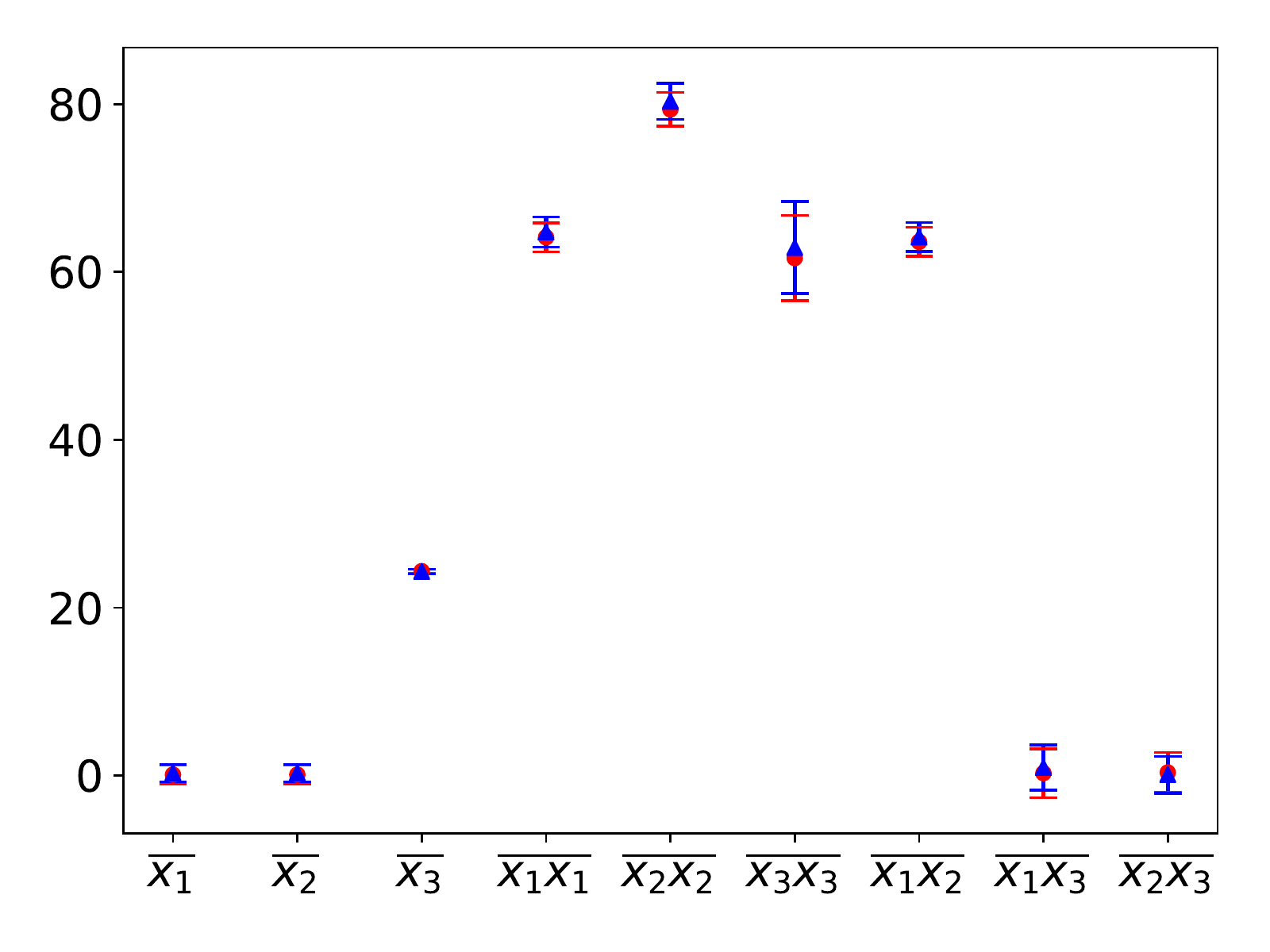}}
    \caption{First two moments of state $X$ for noisy Lorenz 63 system found by using (a) standard EKI and (b) sparse EKI.}
  \label{fig:G_nL63}
\end{figure}

The data in this case are finite-time averaged approximations of $\{\mathcal{G}_1(X),\mathcal{G}_2(X)\}$, i.e., first and second moments of simulated states. The time-interval used to gather time-averaged statistics is $T=100$. Therefore, we are learning 18 unknown coefficients (17 independent coefficients in $\{\theta_{ki}\}$ and a constant $\sigma$), using a data vector $y$ of dimension 9, as shown in Fig.~\ref{fig:G_nL63}. The comparison between the true data and the results of estimated systems in Fig.~\ref{fig:G_nL63} shows that the sparse EKI has slightly better agreement with the true data than does standard EKI.

\begin{figure}[!htbp]
  \centering
  \includegraphics[width=0.55\textwidth]{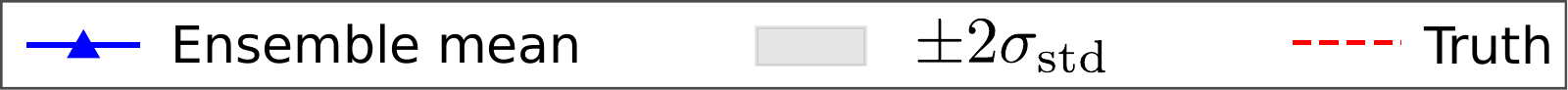}
  \subfloat[Standard EKI]{\includegraphics[width=0.49\textwidth]{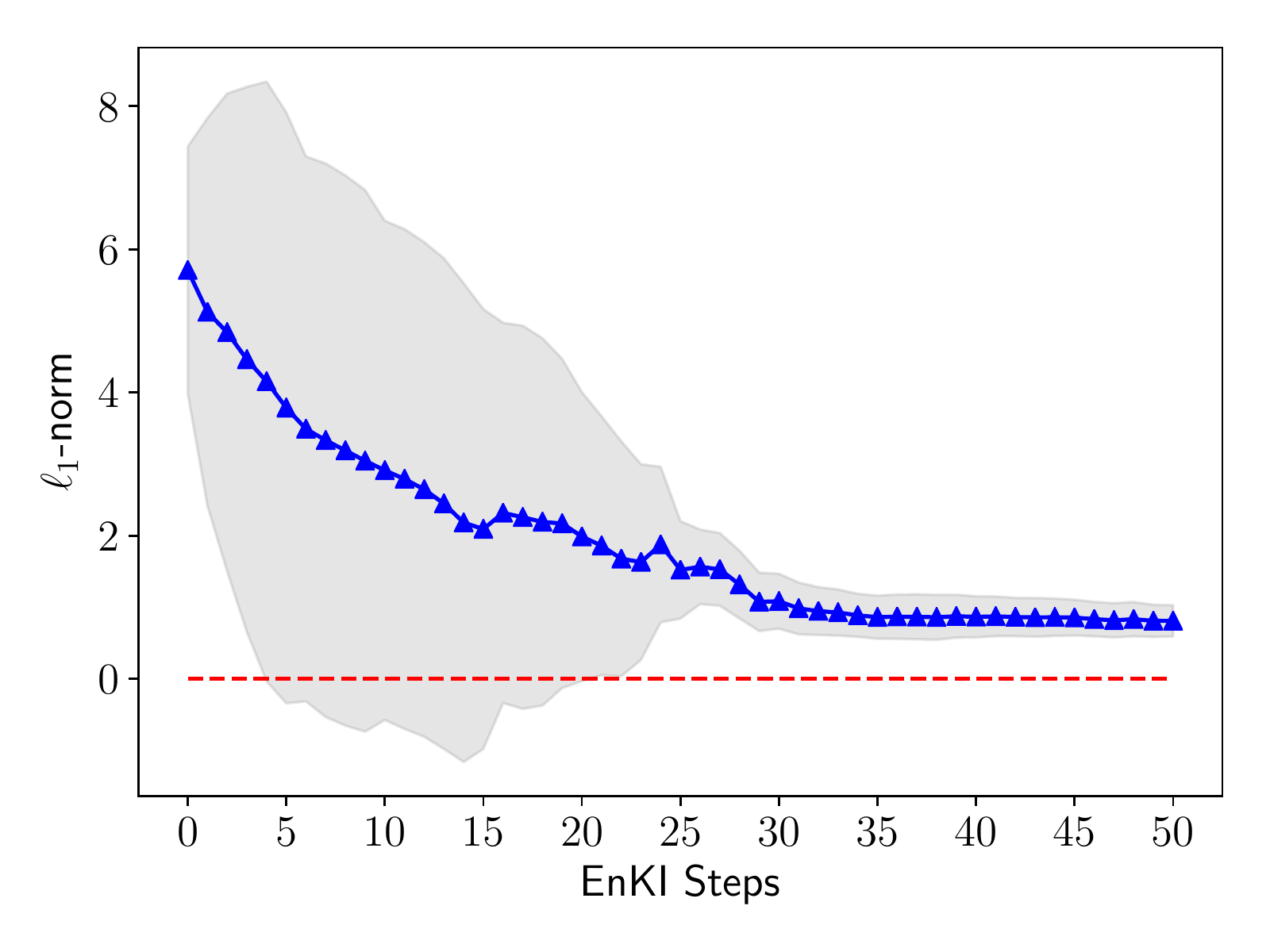}}
  \subfloat[Sparse EKI]{\includegraphics[width=0.49\textwidth]{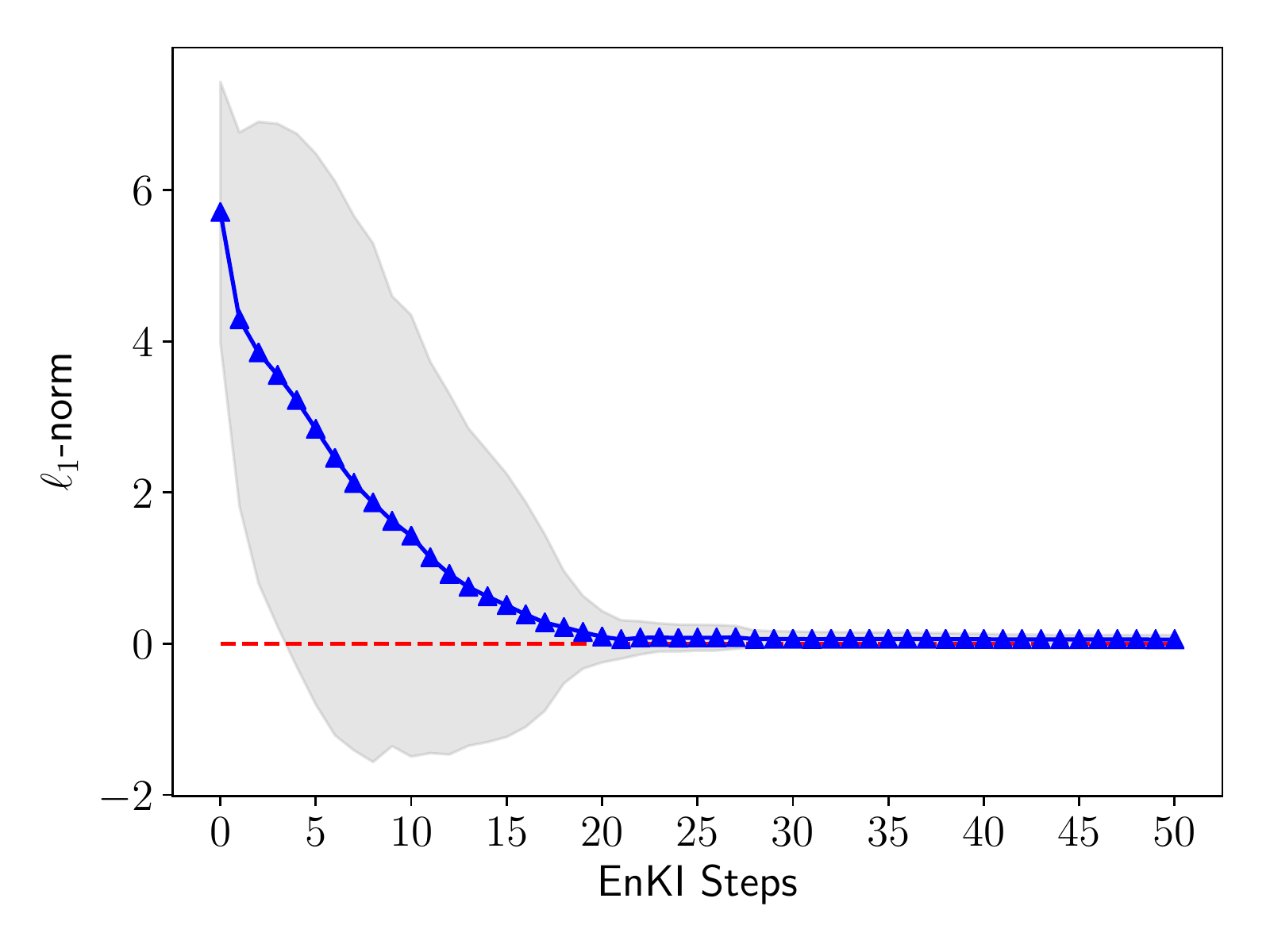}}
    \caption{$\ell_1$-norm of redundant coefficients for noisy Lorenz 63 system found by using (a) standard EKI and (b) sparse EKI.}
  \label{fig:l1_norm_nL63}
\end{figure}

The $\ell_1$-norm of all redundant coefficients also demonstrates the improved performance using sparse EKI, as presented in Fig.~\ref{fig:l1_norm_nL63}. Compared to the results of standard EKI, the $\ell_1$-norm of all redundant coefficients is driven much closer to zero using sparse EKI. The coefficients of redundant terms estimated by standard EKI are presented in Table~\ref{tab:coeffs-nL63}, where we can see that there are a few terms being identified with coefficients noticeably larger than zero, such as the
linear term $X_3$ in the equation for $X_1$. On the other hand, sparse EKI drives all coefficients of the redundant terms close to zero as shown by Fig.~\ref{fig:l1_norm_nL63}b, and the detailed results are omitted here for simplicity.

\begin{table}[htbp]
\caption{Mean value of coefficients estimated by standard EKI for all redundant terms.}
\centering
\begin{tabular}{c|cccccc}
\hline
& \multicolumn{5}{c}{Redundant terms} \\
\hline
Equation $X_1$ & $X_3$ & $X_2X_2$ & $X_3X_3$ & $X_1X_2$ & $X_1X_3$ & $X_2X_3$ \\
Coefficient & -0.229 & 0.026 & 0.011 & -0.062 & -0.093 & 0.094 \\
\hline
Equation $X_2$ & $X_3$ & $X_1X_1$ & $X_3X_3$ & $X_1X_2$ & $X_2X_3$ \\
Coefficient & 0.099 & 0.062 & -0.008 & -0.026 & -0.001 \\
\hline
Equation $X_3$ & $X_1$ & $X_2$ & $X_1X_1$ & $X_2X_2$ & $X_1X_3$ & $X_2X_3$ \\
Coefficient & -0.107 & -0.066 & 0.093 & 0.001 & -0.011 & 0.008 \\
\hline
\end{tabular}
\label{tab:coeffs-nL63}
\end{table}

\begin{figure}[!htbp]
  \centering
  \includegraphics[width=0.2\textwidth]{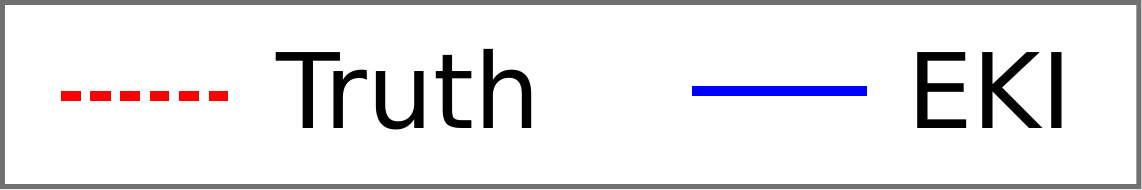}
  \subfloat[$X_1$ (Standard EKI)]{\includegraphics[width=0.33\textwidth]{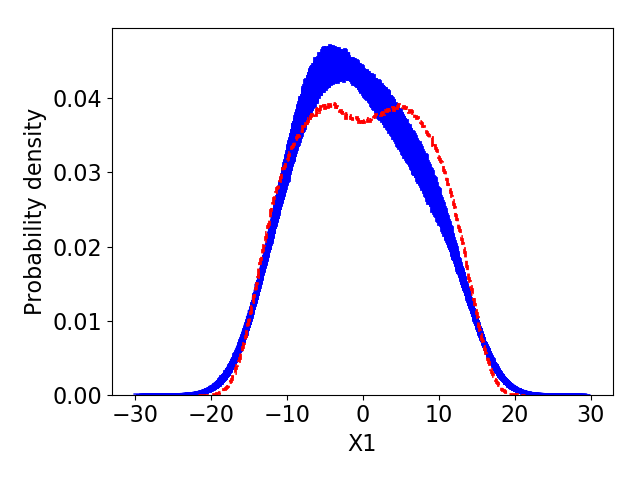}}
  \subfloat[$X_2$ (Standard EKI)]{\includegraphics[width=0.33\textwidth]{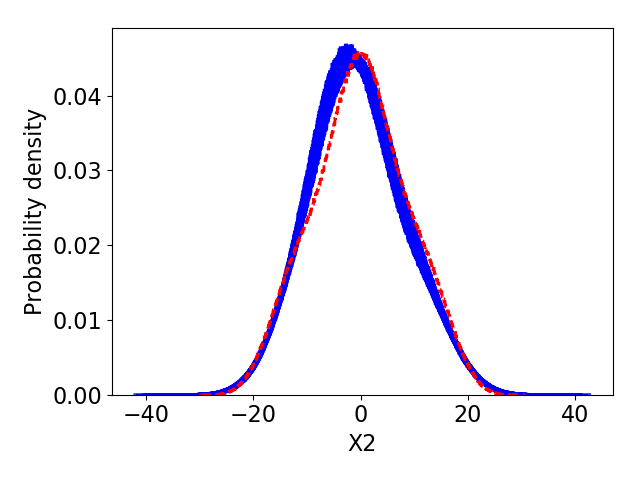}}
  \subfloat[$X_3$ (Standard EKI)]{\includegraphics[width=0.33\textwidth]{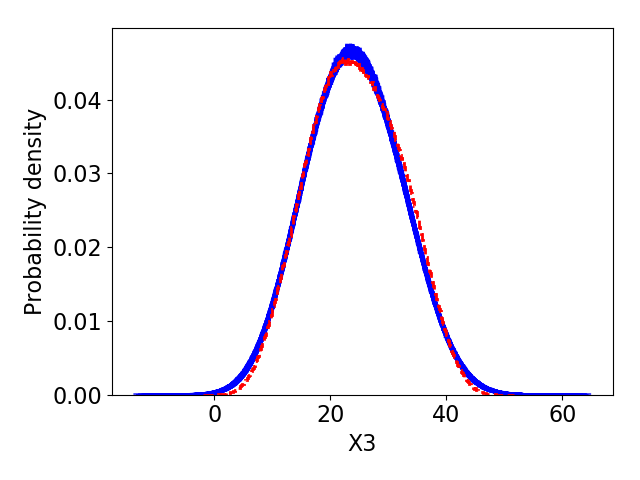}} \\
  \subfloat[$X_1$ (Sparse EKI)]{\includegraphics[width=0.33\textwidth]{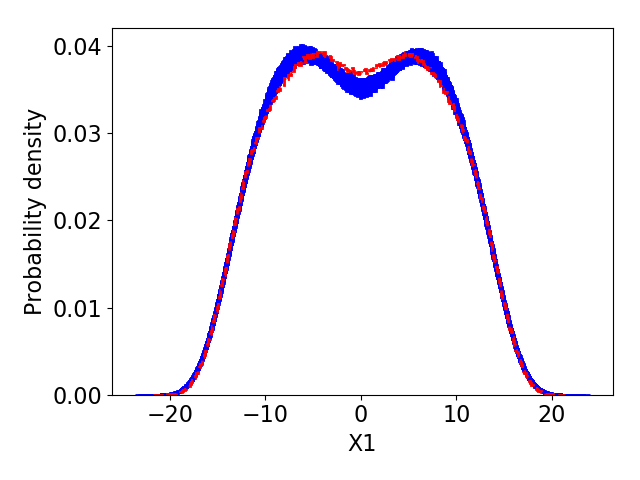}}
  \subfloat[$X_2$ (Sparse EKI)]{\includegraphics[width=0.33\textwidth]{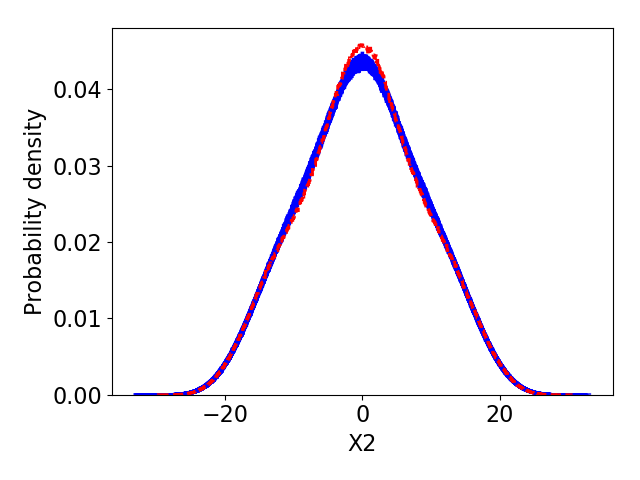}}
  \subfloat[$X_3$ (Sparse EKI)]{\includegraphics[width=0.33\textwidth]{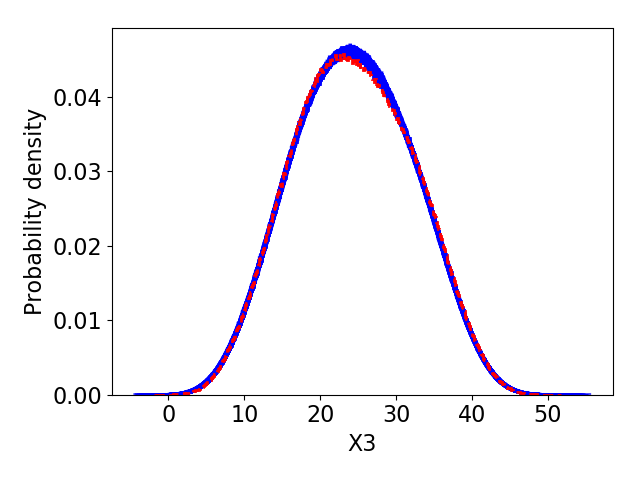}}
    \caption{Invariant measure for noisy Lorenz 63 system found by using (a-c) standard EKI and (d-f) sparse EKI.}
  \label{fig:measure_nL63}
\end{figure}

We further investigate the performances of the estimated systems by evaluating the invariant measure. As presented in Fig.~\ref{fig:measure_nL63}, the results of sparse EKI show better agreement with the true invariant measure for all three states, confirming the improved performance of the sparse EKI-estimated system over that found from standard EKI, in the long time limit. The comparison of autocorrelation functions is presented in Fig.~\ref{fig:autocorr_nL63}, demonstrating a good agreement of both EKI estimated systems with the true system in terms of time correlation. When comparing results in Fig.~\ref{fig:autocorr_nL63}, the time correlation results of sparse EKI show slightly better performance than the ones of standard EKI, especially for the simulated states $X_1$.

\begin{figure}[!htbp]
  \centering
  \includegraphics[width=0.4\textwidth]{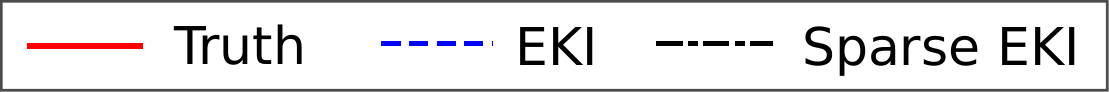}
  \subfloat[$X_1$]{\includegraphics[width=0.33\textwidth]{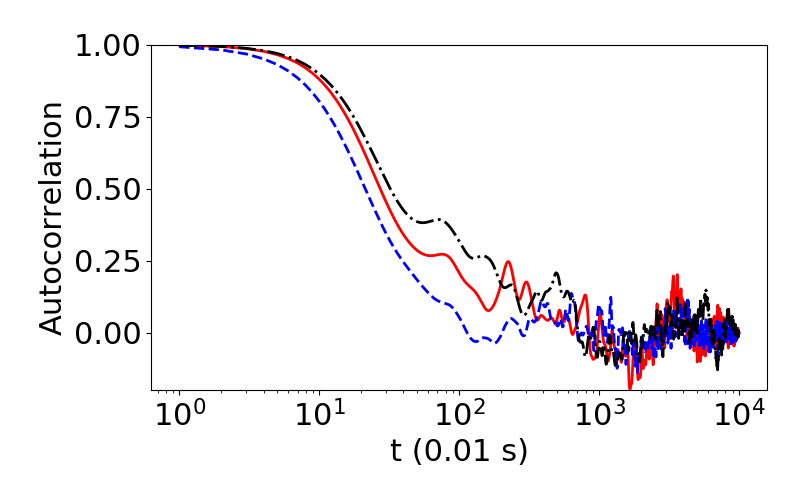}}
  \subfloat[$X_2$]{\includegraphics[width=0.33\textwidth]{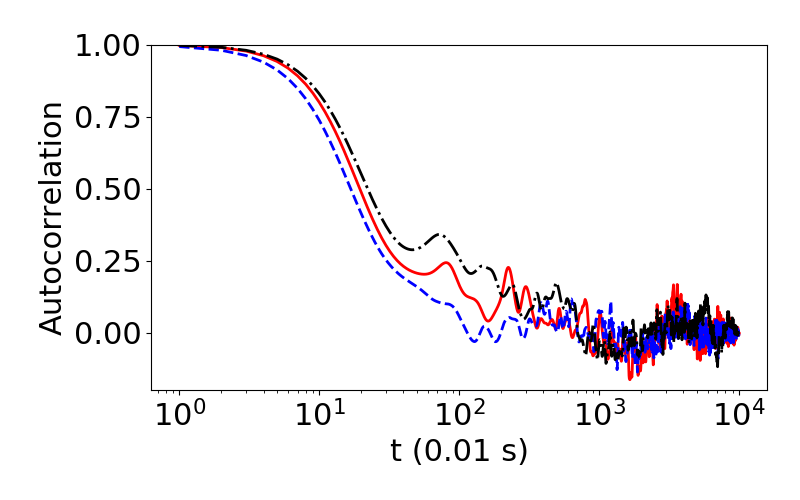}}
  \subfloat[$X_3$]{\includegraphics[width=0.33\textwidth]{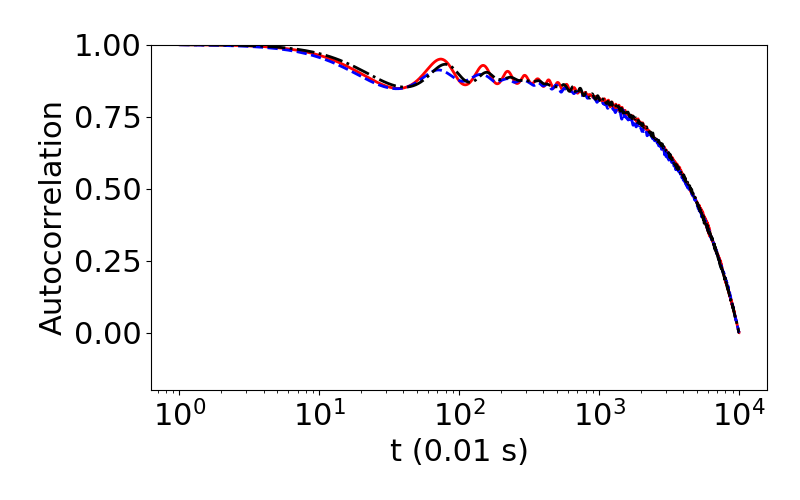}}
    \caption{Autocorrelation for noisy Lorenz 63 system found by using standard EKI and sparse EKI.}
  \label{fig:autocorr_nL63}
\end{figure}

\begin{remark}
For this example we studied in detail the choice of the $\ell_1$ penalty 
parameter $\gamma$ arising in \eqref{eq:optim-l1} and the thresholding parameter $\lambda$ in \eqref{eq:threshold}. As presented in 
Fig.~\ref{fig:hyperparams_nL63}a, smaller $\gamma$ tends to provide smaller 
$|\theta|_{\ell_1}$, which indicates a simpler model. The data mismatch dramatically increases when $\gamma$ is less than 
around $\gamma=50$, demonstrating underfitting. On the other hand, 
both data mismatch and model complexity increase slowly for $\gamma \ge 60$, 
showing that the performance of the proposed method is not sensitive 
to the choice of $\gamma$ provided it is sufficiently large. The comparison of results using different thresholding parameter $\lambda$ in Fig.~\ref{fig:hyperparams_nL63}b shows that the performance of the proposed method is not sensitive to $\lambda$. In the example 
of the noisy Lorenz 63 system we choose $\gamma=60$ and $\lambda=0.1$, based on these
observations. Since the proposed method is not sensitive to $\gamma$ outside 
the underfitting regime and $\lambda$, the parametric study is not presented 
in other examples for simplicity. It would, of course, be useful 
to automate the choice of $\gamma$ and $\lambda$ via cross-validation.
\end{remark}

\begin{figure}[!htbp]
  \centering
  \subfloat[$\ell_1$ penalty coefficient]{\includegraphics[width=0.5\textwidth]{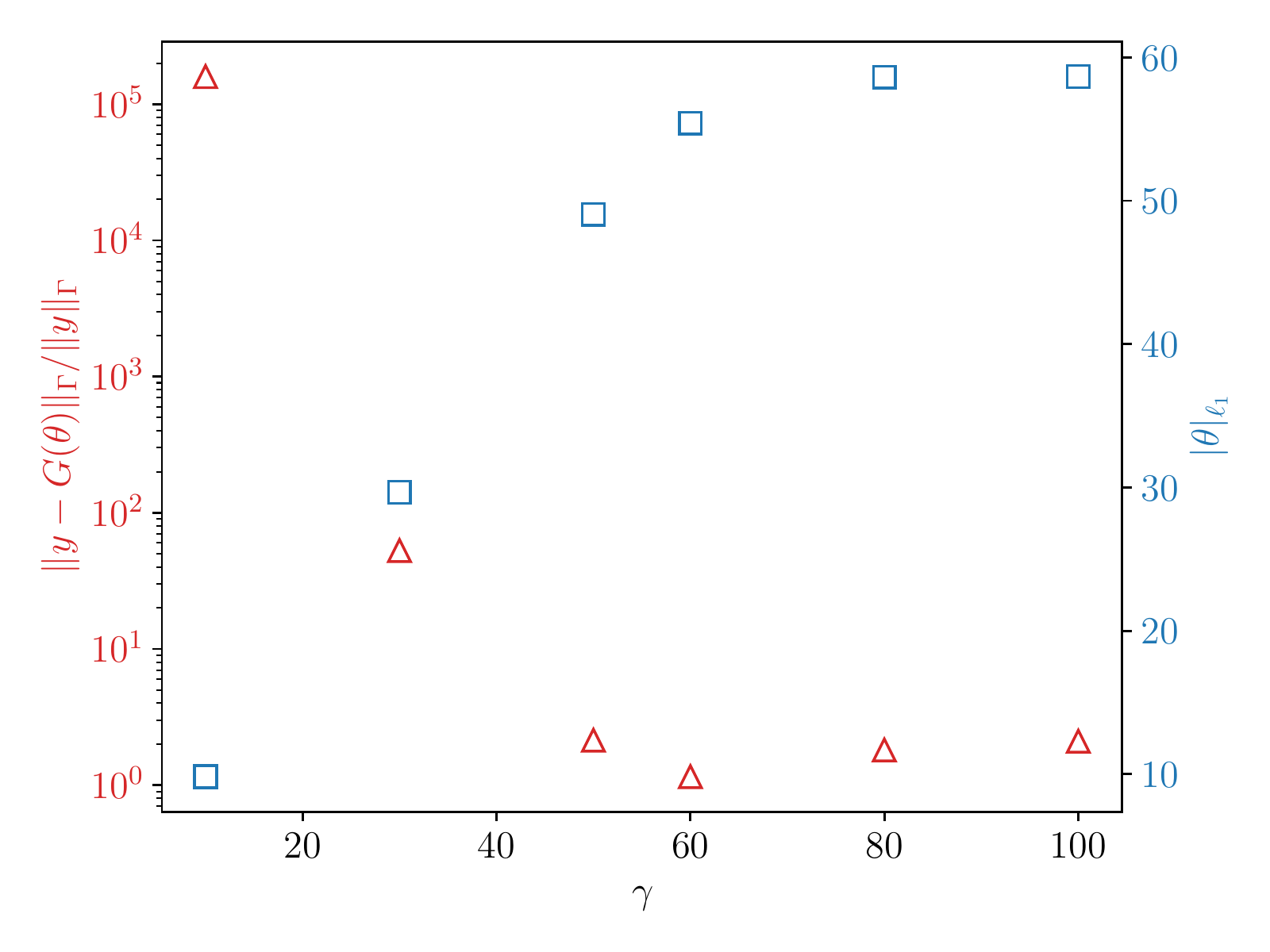}}
  \subfloat[thresholding parameter]{\includegraphics[width=0.5\textwidth]{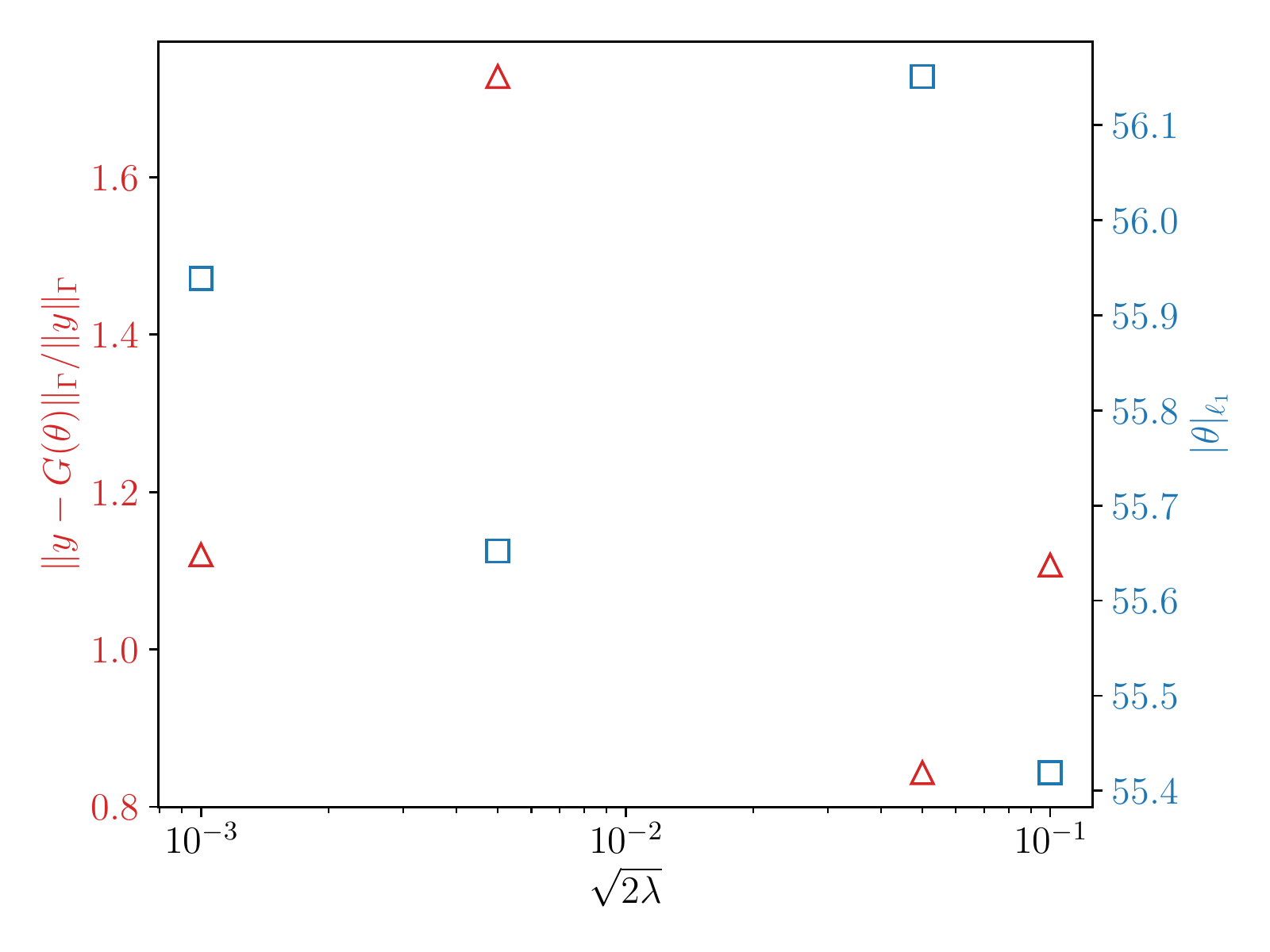}}
    \caption{Parametric study of (a) $\ell_1$ penalty coefficient $\gamma$ and (b) thresholding parameter $\lambda$. The data mismatch (left axis) is denoted by triangles ($\triangle$), and the model complexity (right axis) is denoted by squares ($\square$).}
  \label{fig:hyperparams_nL63}
\end{figure}

\subsection{Lorenz 96 System}
\label{ssec:NL96}
In this subsection, we study two examples of the Lorenz 96 system: (1) a simulation study for which the true and modeled systems are both single-scale Lorenz 96 systems; (2) a more realistic study for which the true system is a multi-scale Lorenz 96 system and the modeled system only resolves the slow variables.

\subsubsection{Simulation Study}
For this simulation study, the data are generated from the single-scale Lorenz 96 system in \eqref{eq:l96ss}. The goal is to fit a model as shown in \eqref{eq:l96ssm} by using sparse EKI. Therefore, we are fitting 180 unknown coefficients in total as denoted by $\{\{\beta_k^{(i)}\}_{i=1}^4,\alpha_k\}_{k=1}^{36}$, using a data vector y of dimension 44 (only observing the finite-time average approximation of first and second moments $\{\{\mathcal{G}_1(X),\mathcal{G}_2(X)\}$ for the first 8 state variables). The duration used for time-averaging is $T=100$. Results are presented in Figs.~\ref{fig:G_sL96} to~\ref{fig:autocorr_sL96}.

The comparison of EKI results with data from the true system is presented in Fig.~\ref{fig:G_sL96}. This shows that the results of both standard EKI and sparse EKI have good agreement with the true system in data space. However, the $\ell_1$-norm of redundant coefficients presented in Fig.~\ref{fig:l1_norm_sL96} indicates that some redundant coefficients are not close to zero for the system identified by the standard EKI, while most redundant coefficients are driven to zero using sparse EKI.

\begin{figure}[!htbp]
  \centering
  \includegraphics[width=0.2\textwidth]{G_legend}
  \subfloat[Standard EKI]{\includegraphics[width=0.49\textwidth]{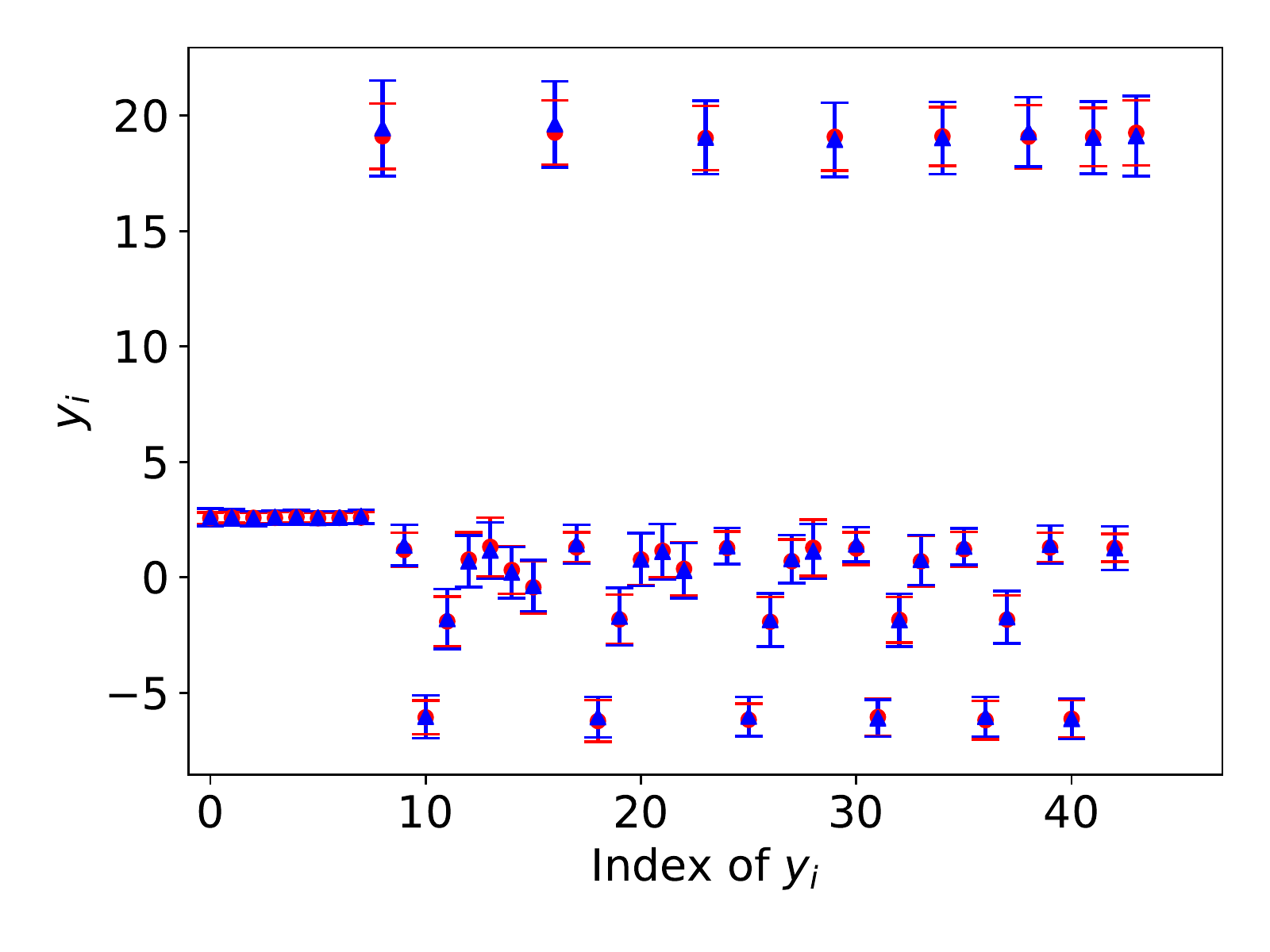}}
  \subfloat[Sparse EKI]{\includegraphics[width=0.49\textwidth]{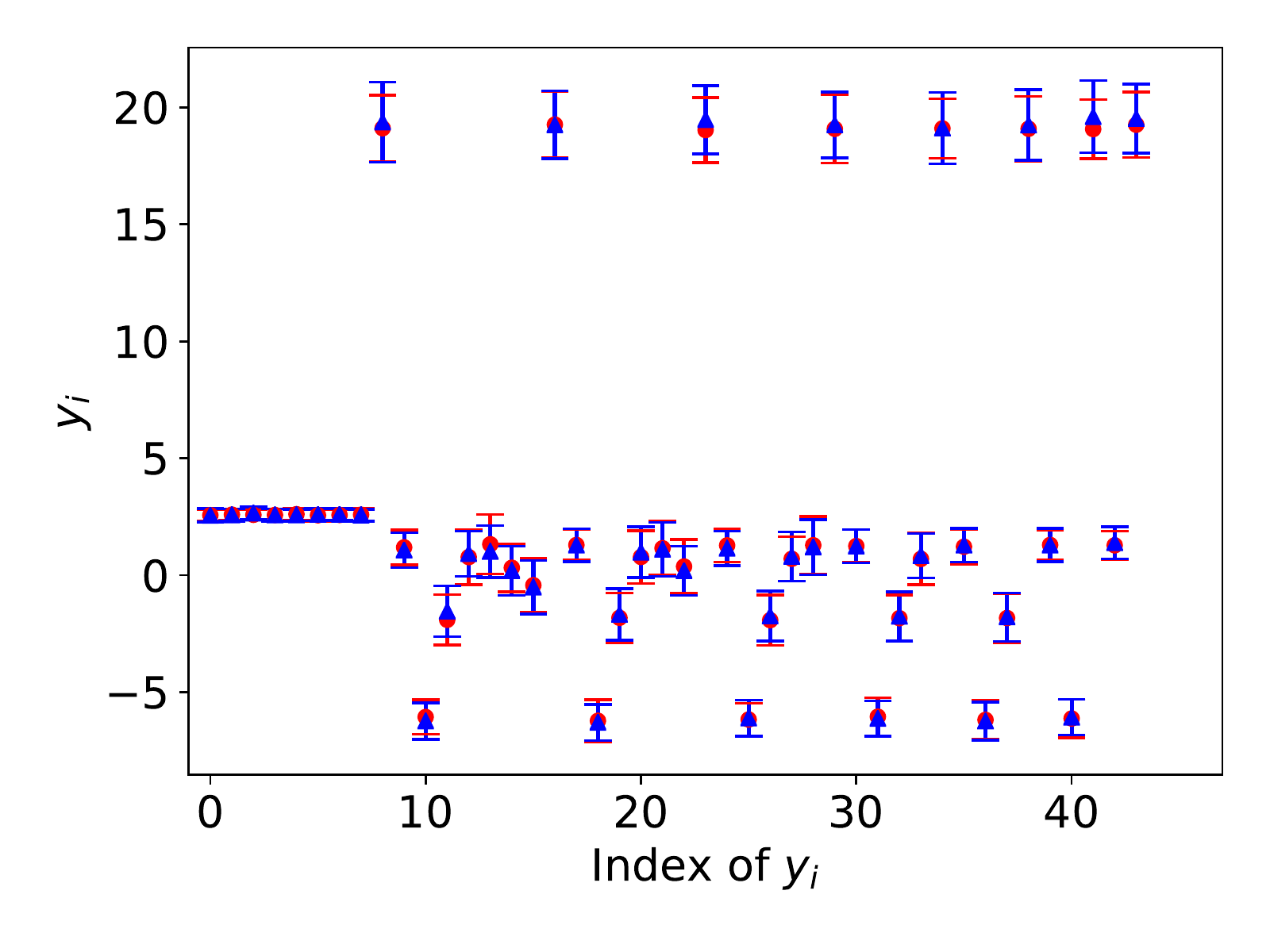}}
    \caption{First two moments of state $X$ for single-scale Lorenz 96 system found by using (a) standard EKI and (b) sparse EKI.}
  \label{fig:G_sL96}
\end{figure}

\begin{figure}[!htbp]
  \centering
  \includegraphics[width=0.55\textwidth]{params_legend_sigma}
  \subfloat[Standard EKI]{\includegraphics[width=0.49\textwidth]{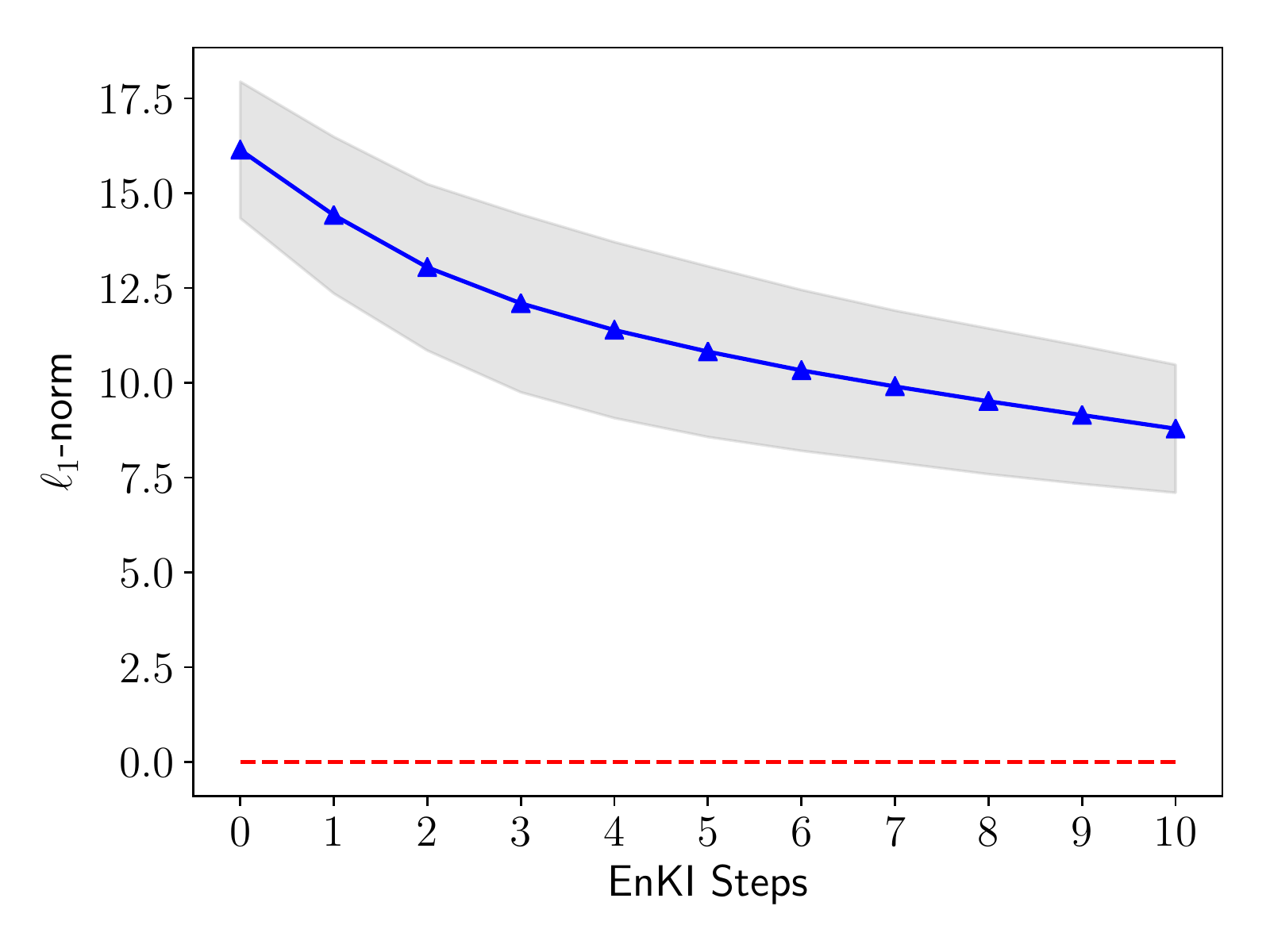}}
  \subfloat[Sparse EKI]{\includegraphics[width=0.49\textwidth]{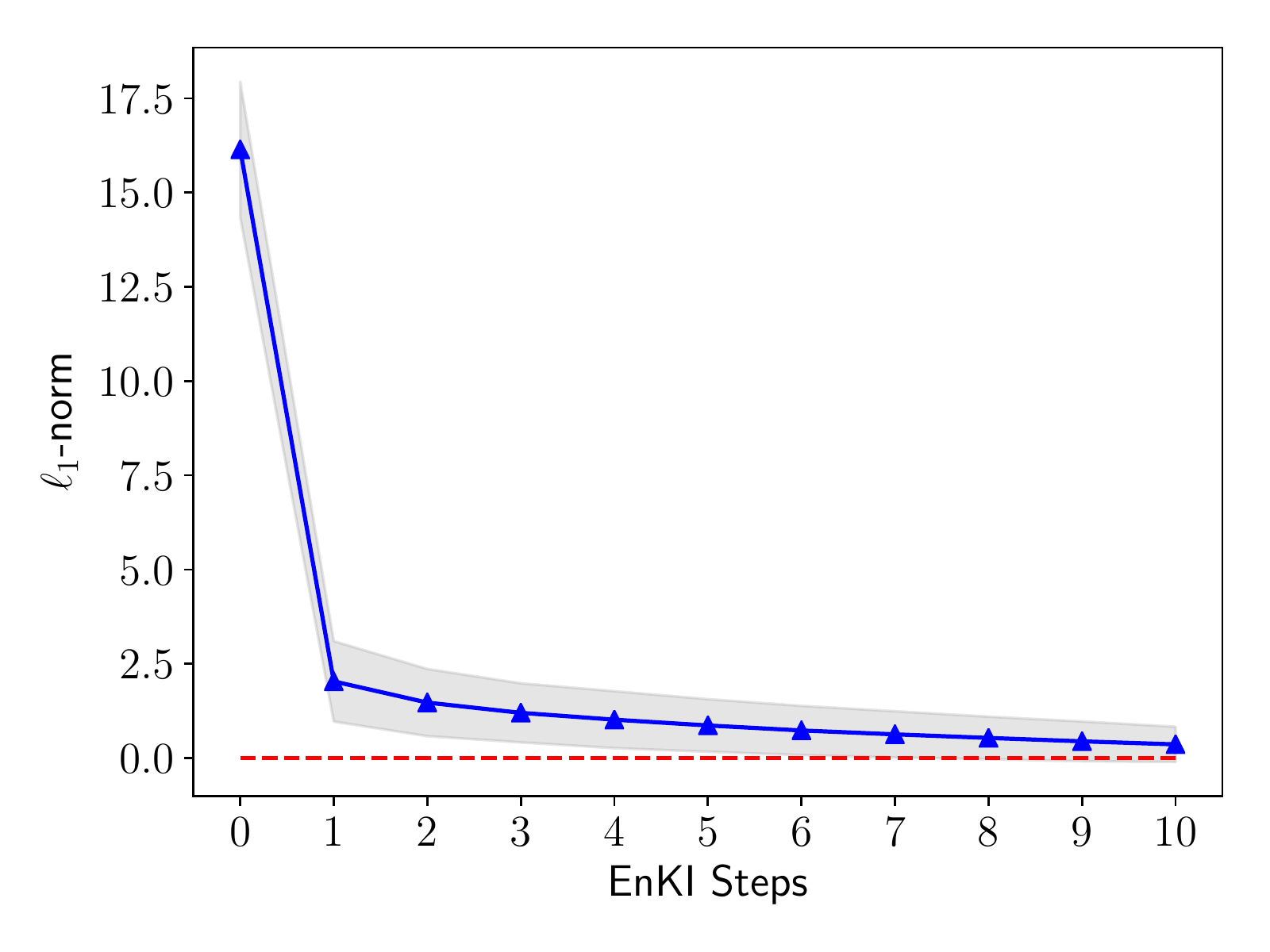}}
    \caption{$\ell_1$-norm of redundant coefficients for single-scale Lorenz 96 system found by using (a) standard EKI and (b) sparse EKI.}
  \label{fig:l1_norm_sL96}
\end{figure}

The long-time limit performance is investigated by evaluating the invariant measure, as presented in Fig.~\ref{fig:measure_sL96}. We can see that both the systems identified by standard EKI and sparse EKI show a good agreement with the invariant measure of the true system, while there is slightly greater uncertainty in the invariant measures of the ensemble simulations from standard EKI. As for the invariant measures, the comparison of autocorrelation functions
presented in Fig.~\ref{fig:autocorr_sL96} shows similar performance of the systems identified by standard EKI and sparse EKI, and both systems have a good agreement with the autocorrelation of the true system.

\begin{figure}[!htbp]
  \centering
  \includegraphics[width=0.2\textwidth]{measure_legend}
  \subfloat[Standard EKI]{\includegraphics[width=0.49\textwidth]{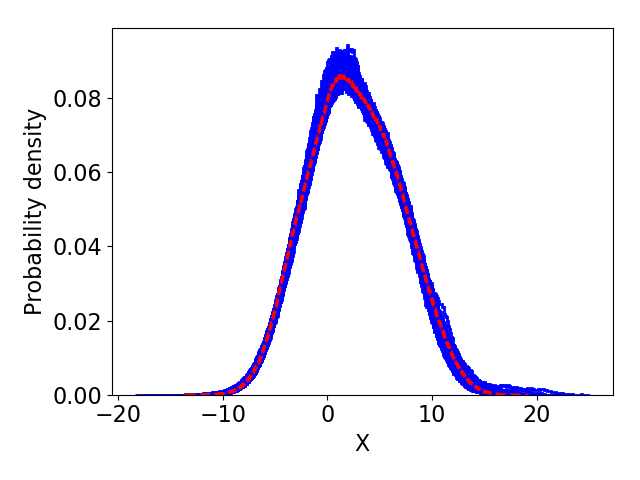}}
  \subfloat[Sparse EKI]{\includegraphics[width=0.49\textwidth]{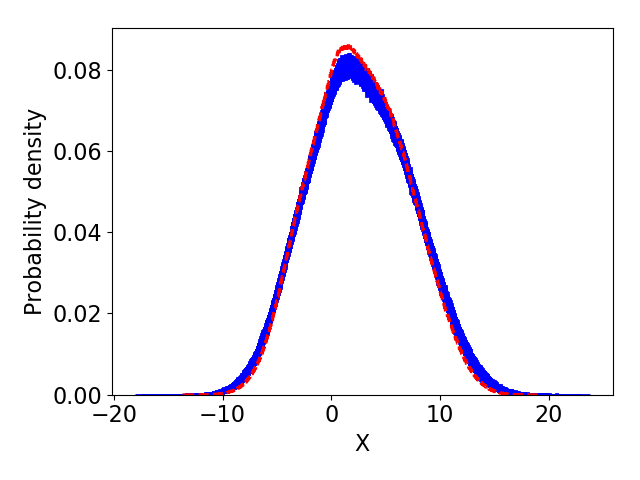}}
    \caption{Invariant measure for single-scale Lorenz 96 system found by using (a) standard EKI and (b) sparse EKI.}
  \label{fig:measure_sL96}
\end{figure}

\begin{figure}[!htbp]
  \centering
  \hspace{1em}\includegraphics[width=0.4\textwidth]{autocorr_legend}
  \includegraphics[width=0.6\textwidth]{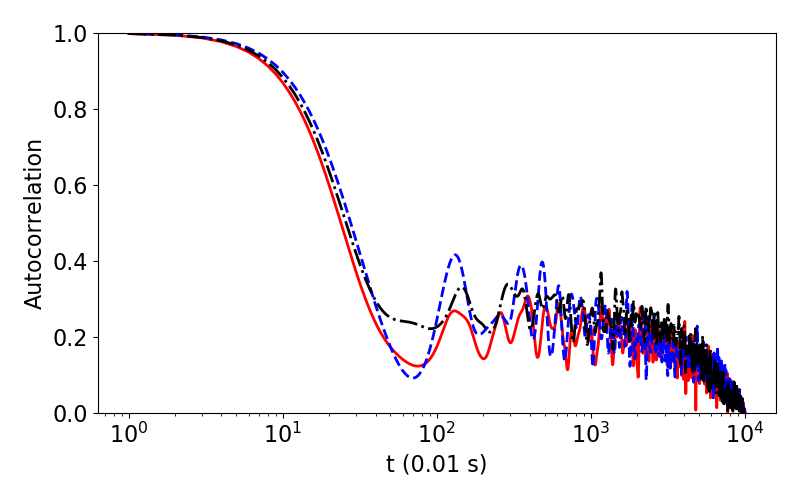}
    \caption{Autocorrelation for single-scale Lorenz 96 system found by using standard EKI and sparse EKI.}
  \label{fig:autocorr_sL96}
\end{figure}

\subsubsection{Multi-scale Data}
We now study a more realistic problem for which the data are generated from the multi-scale Lorenz 96 system in \eqref{eq:l96ms}, and the goal is to fit a reduced-order model as shown in \eqref{eq:l96msm} by using sparse EKI. Therefore, we are fitting 190 unknown coefficients (180 coefficients as denoted by $\{\{\beta_k^{(i)}\}_{i=1}^4,\alpha_k\}_{k=1}^{36}$ and 10 coefficients of GP), using a data vector $y$ of dimension 44 (only observing the finite-time average approximation of first and second moments $\{\{\mathcal{G}_1(X),\mathcal{G}_2(X)\}$ for the first 8 state variables). The time for gathering averaged statistics is $T=100$. Results are presented in Figs.~\ref{fig:G_mL96} to~\ref{fig:autocorr_mL96}.

\begin{figure}[!htbp]
  \centering
  \includegraphics[width=0.2\textwidth]{G_legend}
  \subfloat[Standard EKI]{\includegraphics[width=0.49\textwidth]{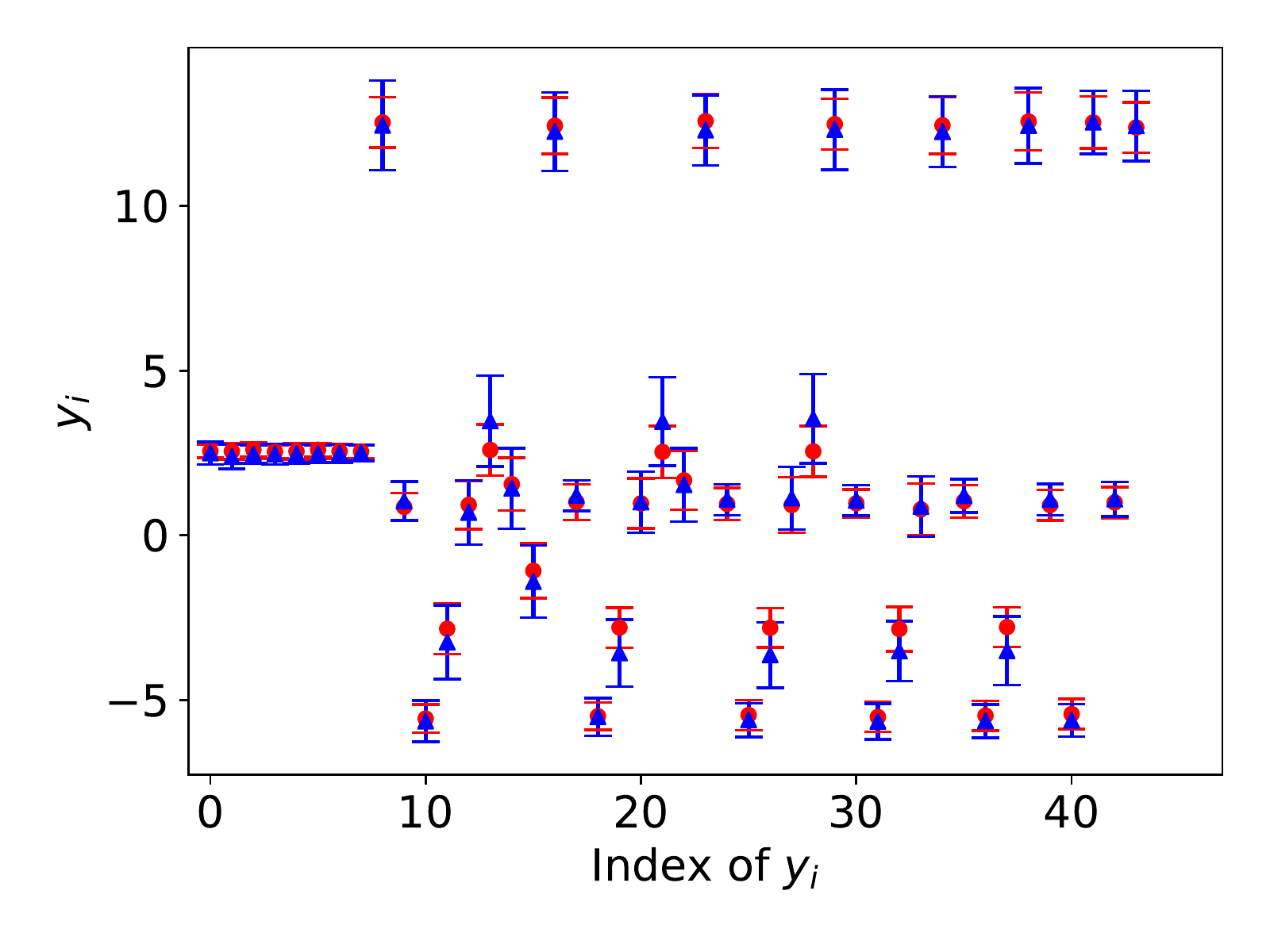}}
  \subfloat[Sparse EKI]{\includegraphics[width=0.49\textwidth]{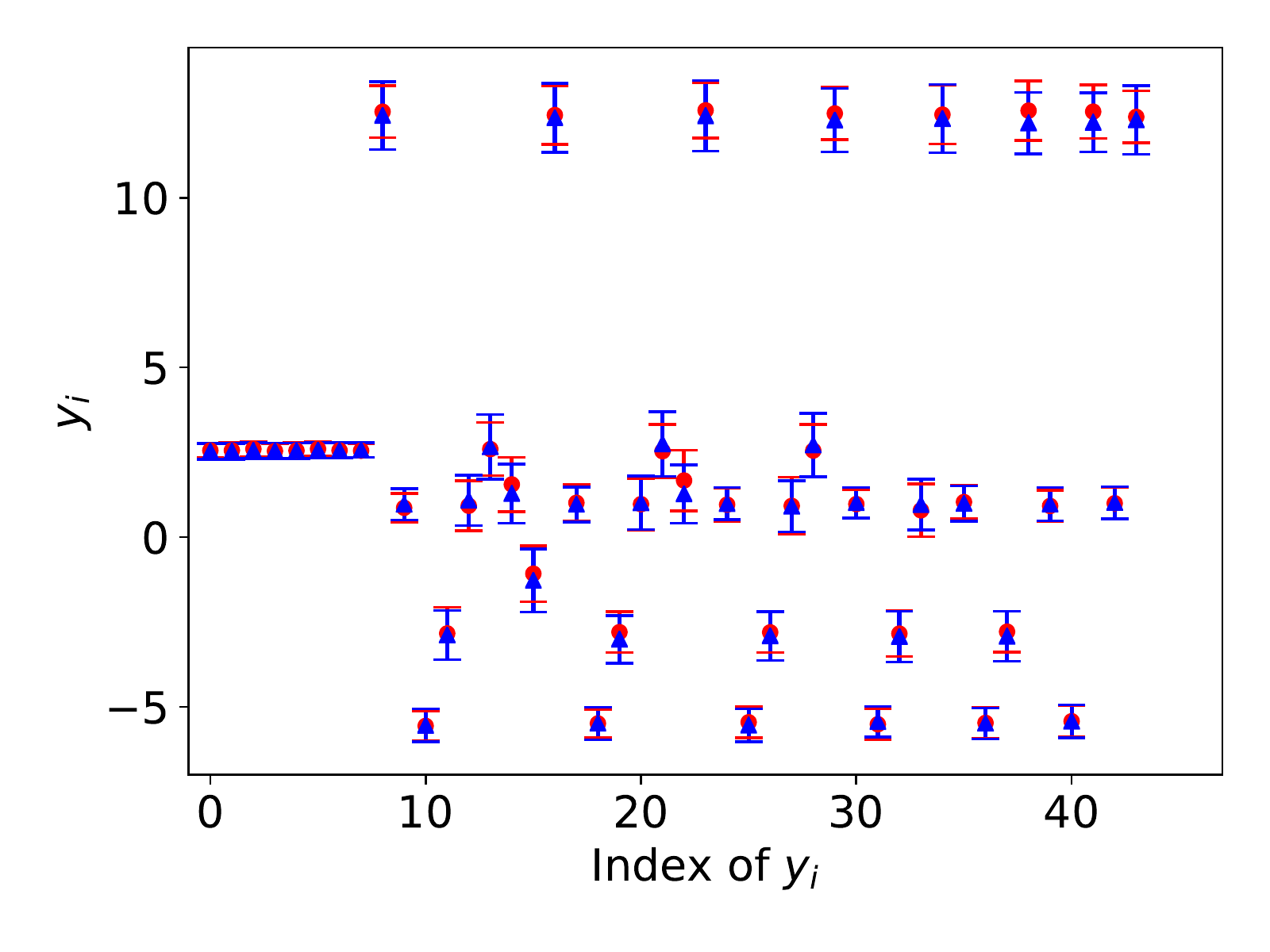}}
    \caption{First two moments of state $X$ for multi-scale Lorenz 96 system found by using (a) standard EKI and (b) sparse EKI.}
  \label{fig:G_mL96}
\end{figure}

We first present the comparison between EKI results and observation data from the true system in Fig.~\ref{fig:G_mL96}. Although the results of standard EKI show relatively good agreement with the true observation data in Fig.~\ref{fig:G_mL96}a, the results of sparse EKI demonstrate a better agreement with true data in Fig.~\ref{fig:G_mL96}b. The better performance of sparse EKI is also confirmed by the comparison of the $\ell_1$-norm of all redundant coefficients as presented in Fig.~\ref{fig:l1_norm_mL96}. The comparison in Fig.~\ref{fig:l1_norm_mL96} indicates that most redundant coefficients are successfully driven to zero using sparse EKI, while there are still some non-zero redundant coefficients in the system identified by standard EKI.

\begin{figure}[!htbp]
  \centering
  \includegraphics[width=0.55\textwidth]{params_legend_sigma}
  \subfloat[Standard EKI]{\includegraphics[width=0.49\textwidth]{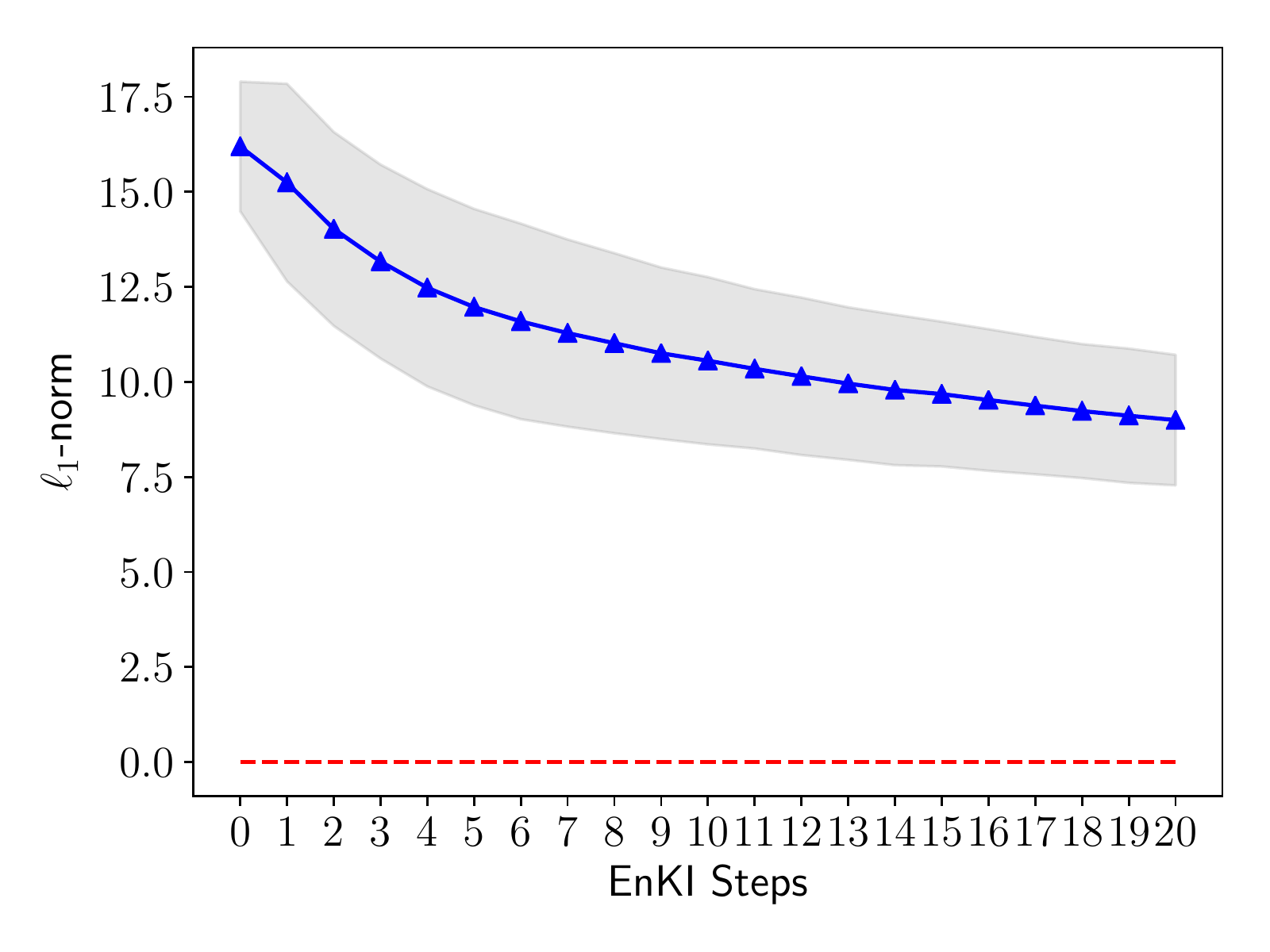}}
  \subfloat[Sparse EKI]{\includegraphics[width=0.49\textwidth]{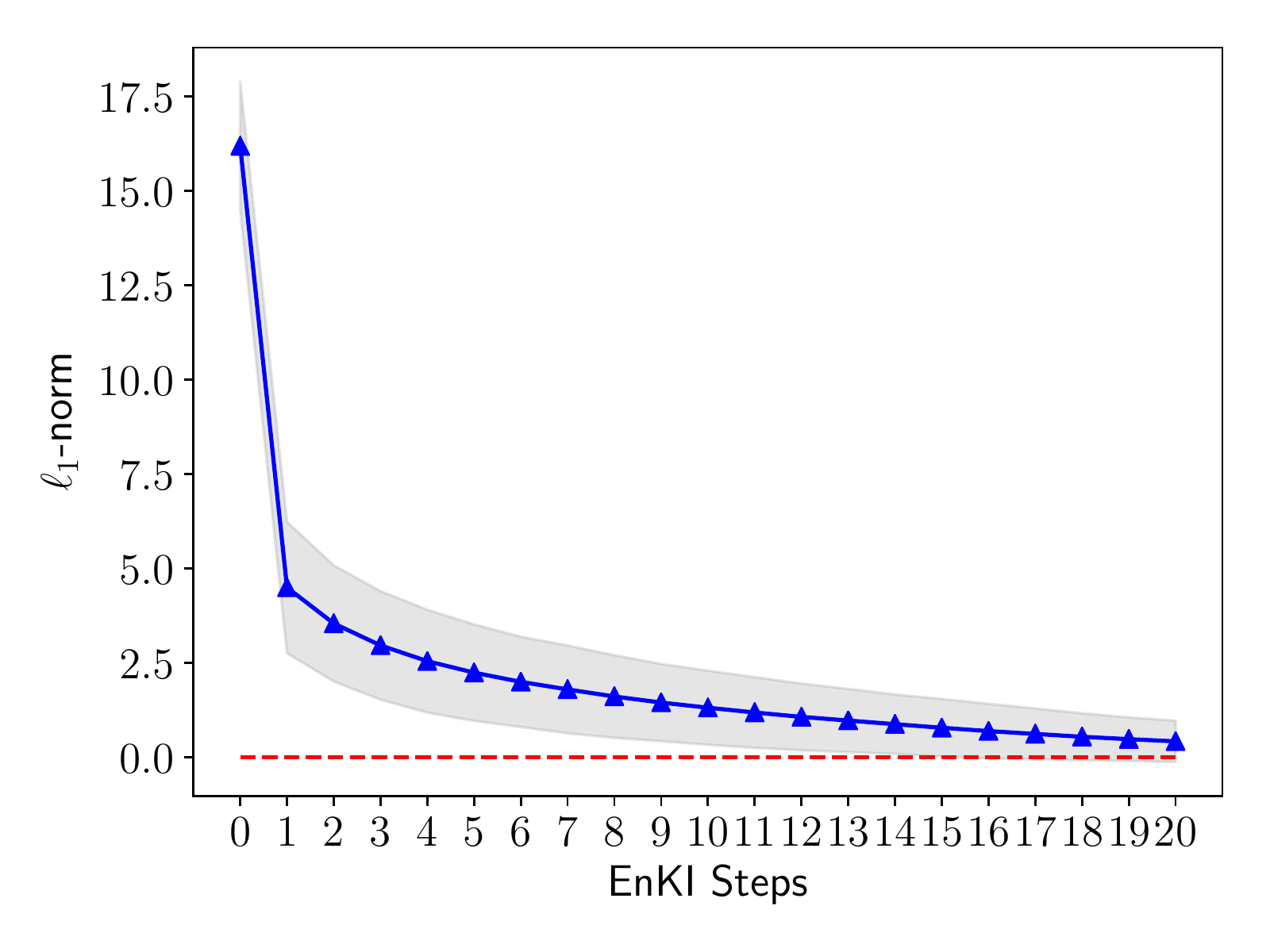}}
    \caption{$\ell_1$-norm of redundant coefficients for multi-scale Lorenz 96 system found by using (a) standard EKI and (b) sparse EKI.}
  \label{fig:l1_norm_mL96}
\end{figure}

The generalization capability of the identified systems is investigated by evaluating the invariant measure. As presented in Fig.~\ref{fig:measure_mL96}, the invariant measure of the system identified by sparse EKI shows a much better agreement with the true system, indicating a better performance in the long-time limit. The comparison of the autocorrelation for a chosen ensemble is also studied in Fig.~\ref{fig:autocorr_mL96}. It demonstrates a better agreement with the true system for the system identified by sparse EKI, in terms of capturing the autocorrelation information.

\begin{figure}[!htbp]
  \centering
  \includegraphics[width=0.2\textwidth]{measure_legend}
  \subfloat[Standard EKI]{\includegraphics[width=0.49\textwidth]{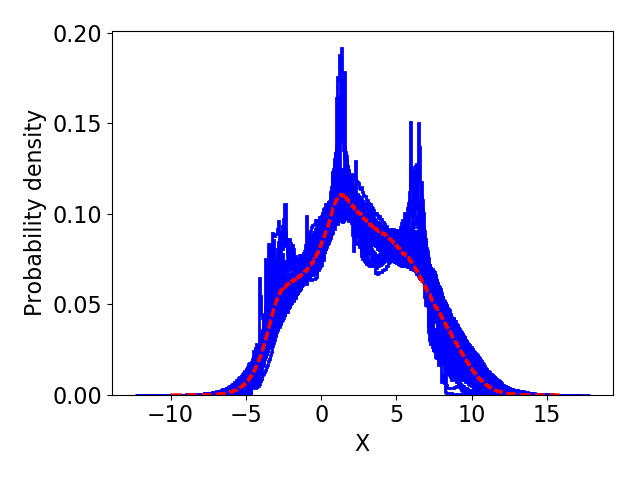}}
  \subfloat[Sparse EKI]{\includegraphics[width=0.49\textwidth]{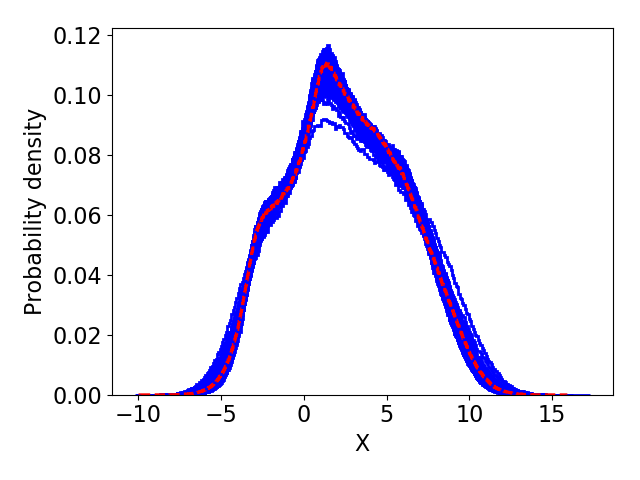}}
    \caption{Invariant measure for multi-scale Lorenz 96 system found by using (a) standard EKI and (b) sparse EKI.}
  \label{fig:measure_mL96}
\end{figure}

\begin{figure}[!htbp]
  \centering
  \hspace{1em}\includegraphics[width=0.4\textwidth]{autocorr_legend}
  \includegraphics[width=0.6\textwidth]{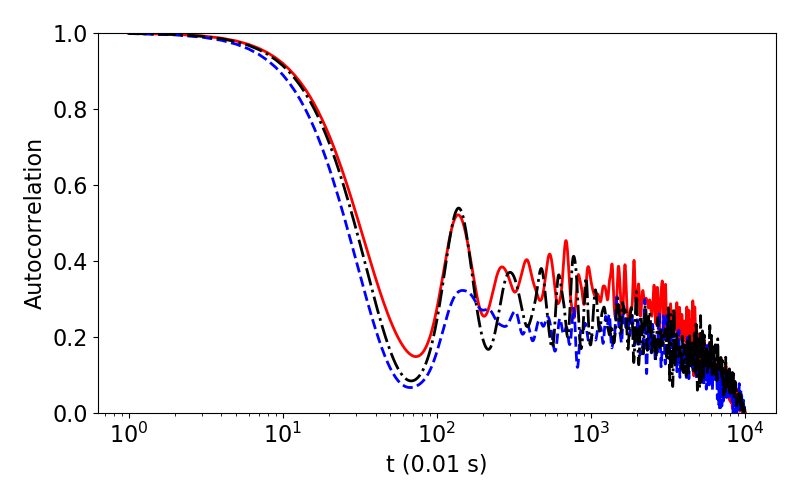}
    \caption{Autocorrelation for multi-scale Lorenz 96 system found by using standard EKI and sparse EKI.}
  \label{fig:autocorr_mL96}
\end{figure}

\subsection{Coalescence Equations}
\label{ssec:NCE}
We further apply the sparse EKI to fit coalescence equations based on statistics derived from time averaging. Specifically, we study three examples: (i) a simulation study where the true system and modeled system share the same closure (Gamma distribution closure) and the same number of resolved states ($K=2$); (ii) an example where true system and modeled system share the same closure (Gamma distribution closure), while the true system resolves more states ($K=3$); (iii) an example where true system and modeled system share the same number of resolved states ($K=2$), while the true system has a different closure (exponential distribution closure). For all three tests of coalescence equations, we impose the symmetry $c_{ab}=c_{ba}$ and thus fit 9 unknown coefficients (recall that we always set $r=3$, and $c_{11}=0$), using a data vector of dimension 5 (observing the finite-time average approximation of first and second moments $\{\{\mathcal{G}_1(X),\mathcal{G}_2(X)\}$). The time used for gathering averaged statistics is $T=50$. Furthermore we impose
positivity on all the learned parameters $c_{ab}$ to ensure
searching in the space of well-posed models.

\subsubsection{Simulation Study}
In this simulation study, the data are generated by simulating the coalescence equations in \eqref{eq:cem} with $K=2$, $r=3$ and the Gamma distribution closure in \eqref{eq:gamma}. The goal is to fit a model with the same $K$, $r$, and closure by using EKI to estimate unknown coefficients $c_{ab}$. Results are presented in Figs.~\ref{fig:G_ce} to~\ref{fig:states_ce}.

\begin{figure}[!htbp]
  \centering
  \includegraphics[width=0.2\textwidth]{G_legend}
  \subfloat[Standard EKI]{\includegraphics[width=0.49\textwidth]{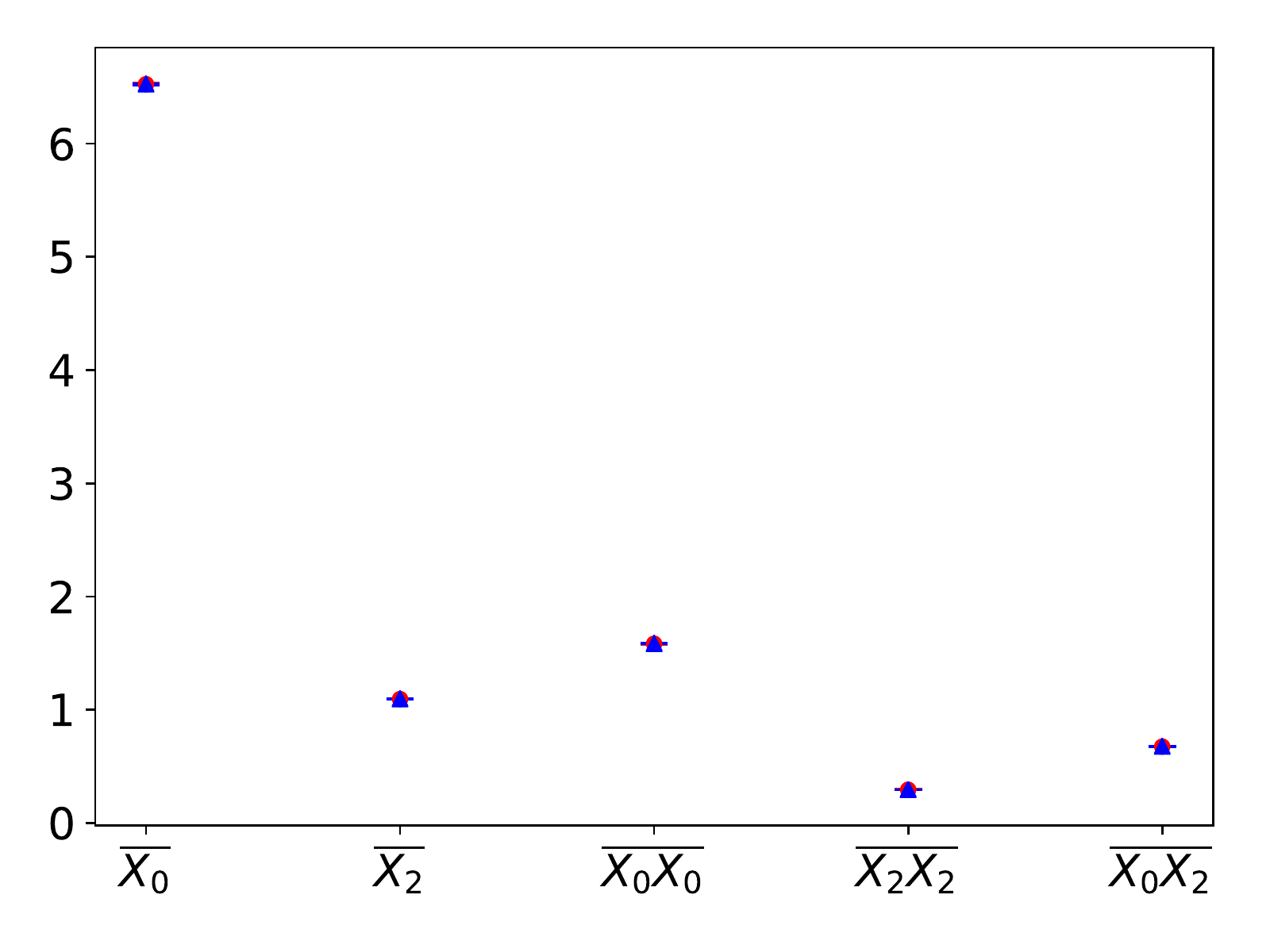}}
  \subfloat[Sparse EKI]{\includegraphics[width=0.49\textwidth]{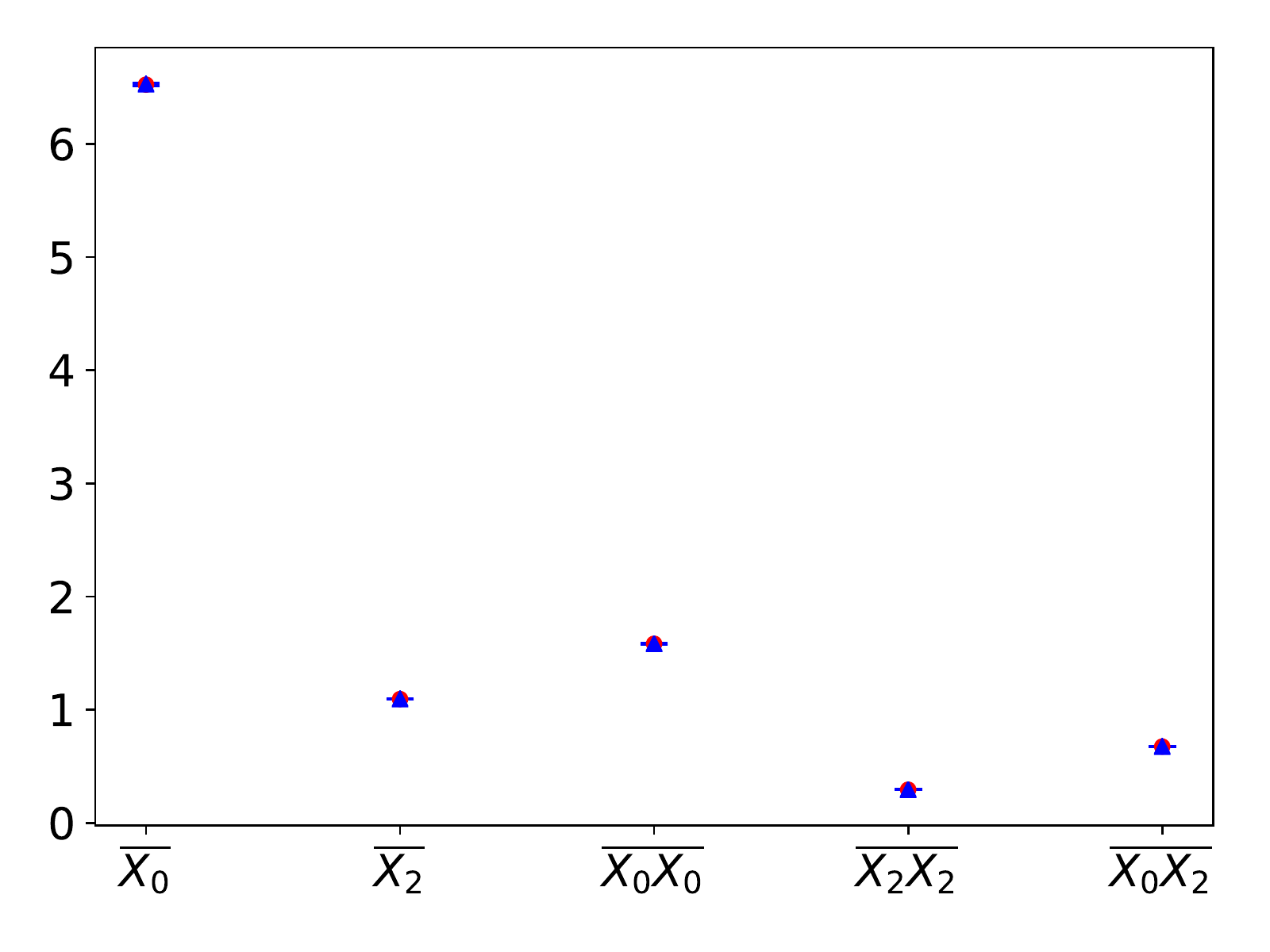}}
    \caption{First two moments of state $X$ for coalescence equations found by using (a) standard EKI and (b) sparse EKI.}
  \label{fig:G_ce}
\end{figure}

The comparison of moments data are presented in Fig.~\ref{fig:G_ce}, which shows that the system identified by using either standard EKI or sparse EKI can have a very good agreement with the true system in terms of matching the first two moments of simulated states. However, it is clear in Fig.~\ref{fig:l1_norm_ce} that the sets of parameters $c_{ab}$ identified by standard EKI and sparse EKI are quite different. The $\ell_1$-norm of redundant coefficients in Fig.~\ref{fig:l1_norm_ce}a indicates that some of the redundant coefficients are still non-zero for the system identified by standard EKI, while all redundant coefficients are driven to zero as presented in Fig.~\ref{fig:l1_norm_ce}b by using sparse EKI.

\begin{figure}[!htbp]
  \centering
  \includegraphics[width=0.55\textwidth]{params_legend_sigma}
  \subfloat[Standard EKI]{\includegraphics[width=0.49\textwidth]{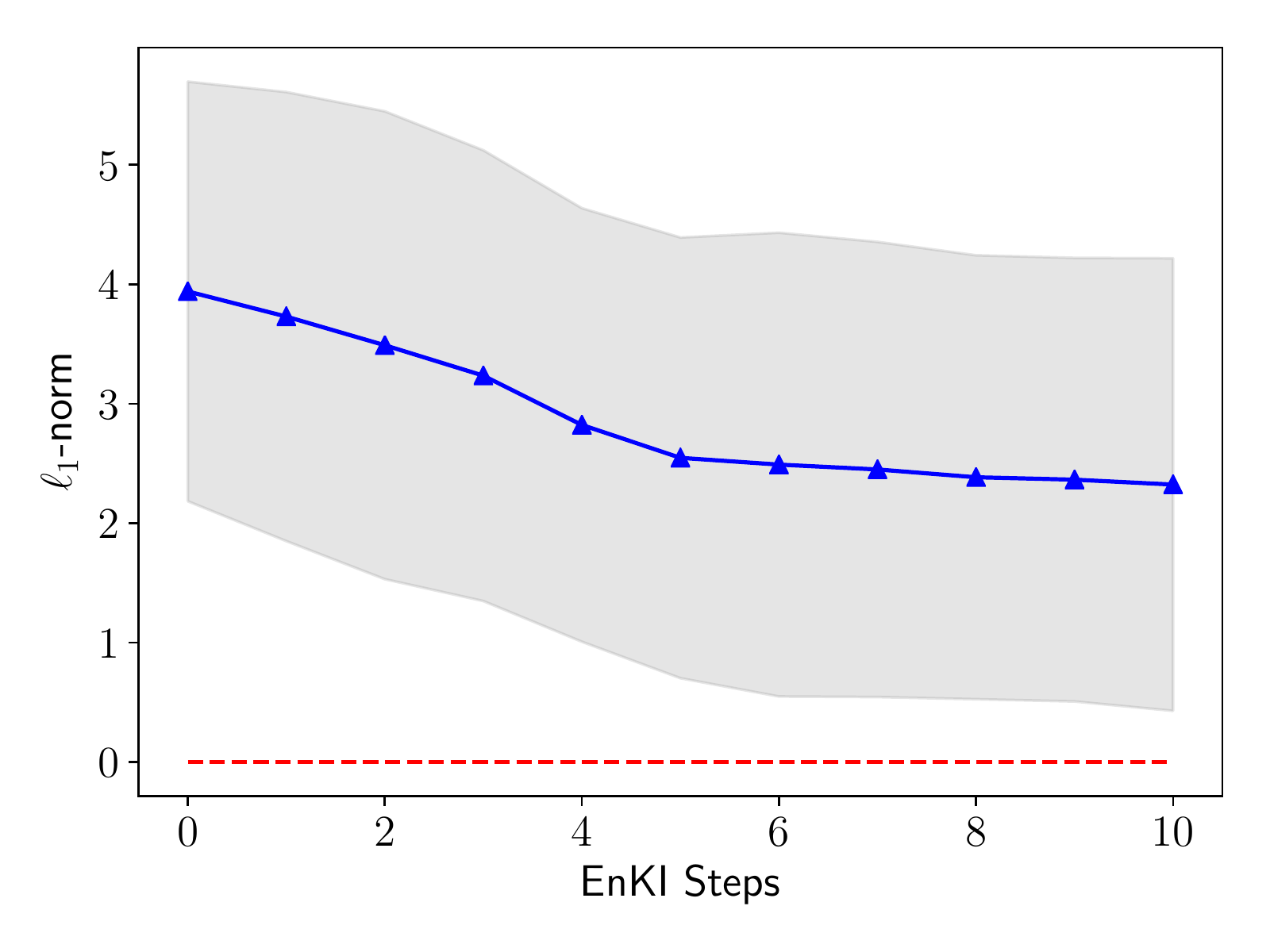}}
  \subfloat[Sparse EKI]{\includegraphics[width=0.49\textwidth]{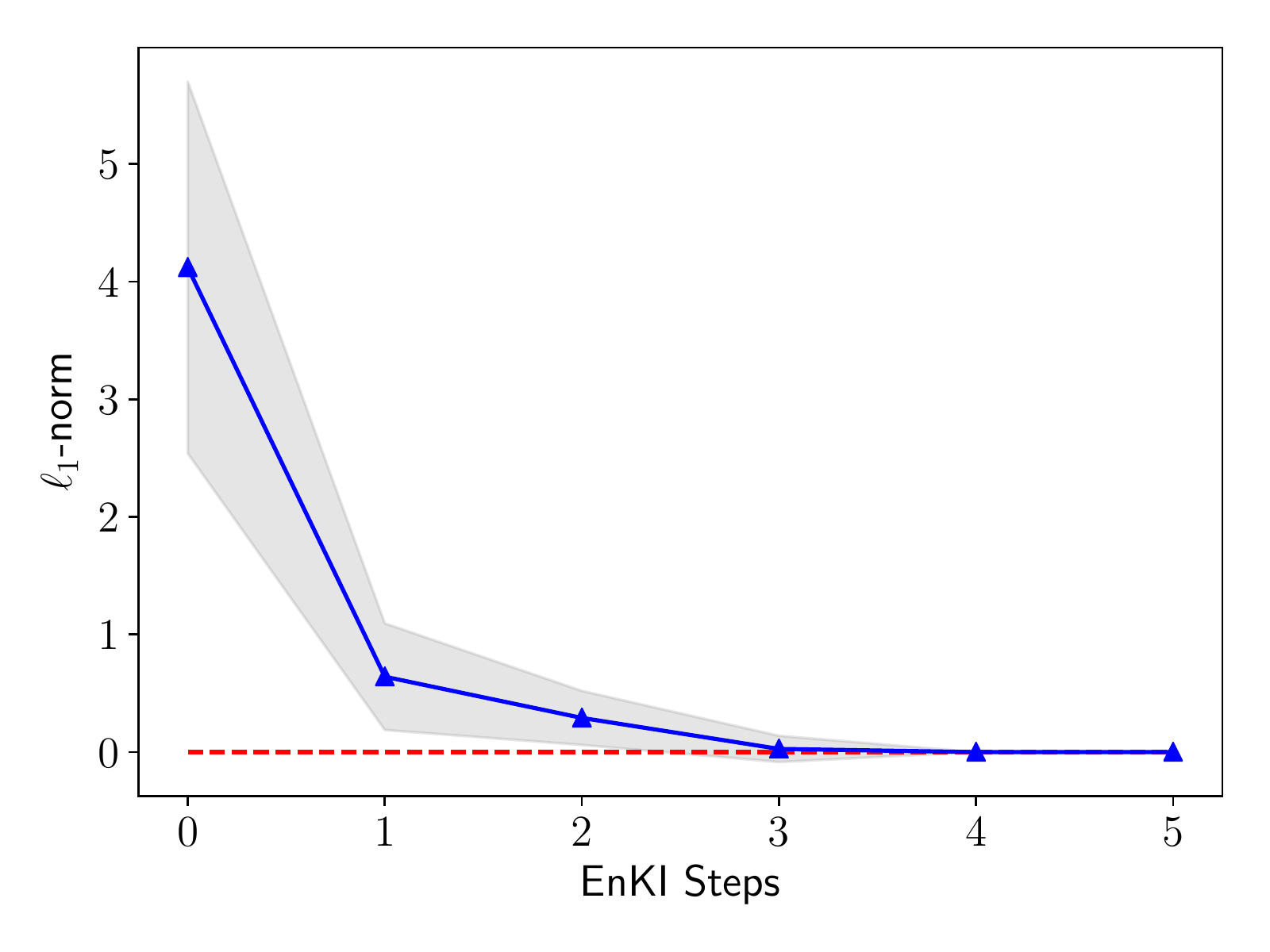}}
    \caption{$\ell_1$-norm of redundant coefficients for coalescence equations found by using (a) standard EKI and (b) sparse EKI.}
  \label{fig:l1_norm_ce}
\end{figure}

The comparison of the non-zero coefficient $c_{00}$ in the true system is presented in Fig.~\ref{fig:param_ce}. The ensemble mean of the estimated parameter matches with its true value using either standard EKI or sparse EKI, while the result of sparse EKI demonstrates better convergence of the ensemble to the true value.

\begin{figure}[!htbp]
  \centering
  \includegraphics[width=0.55\textwidth]{params_legend_sigma}
  \subfloat[Standard EKI]{\includegraphics[width=0.49\textwidth]{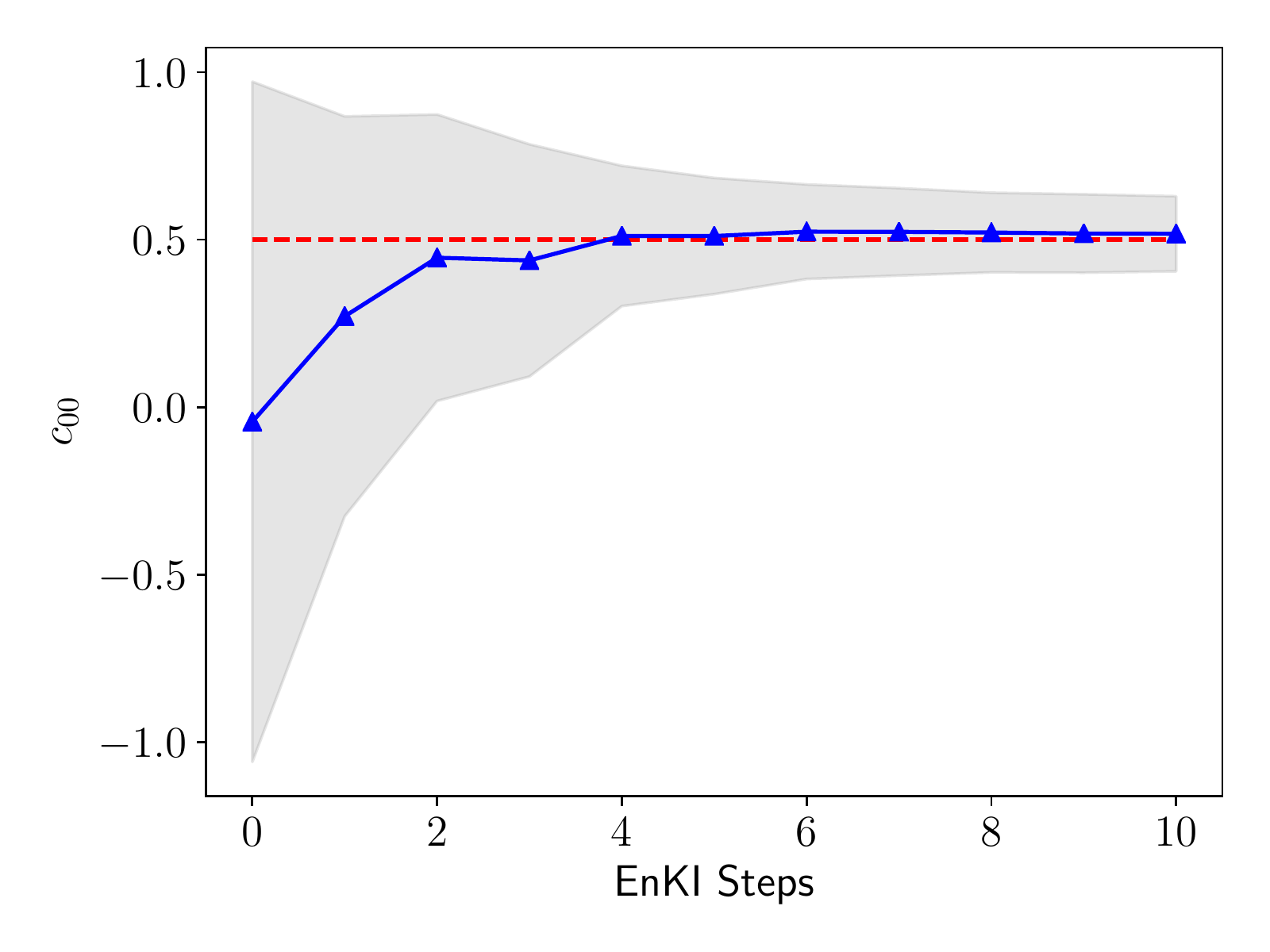}}
  \subfloat[Sparse EKI]{\includegraphics[width=0.49\textwidth]{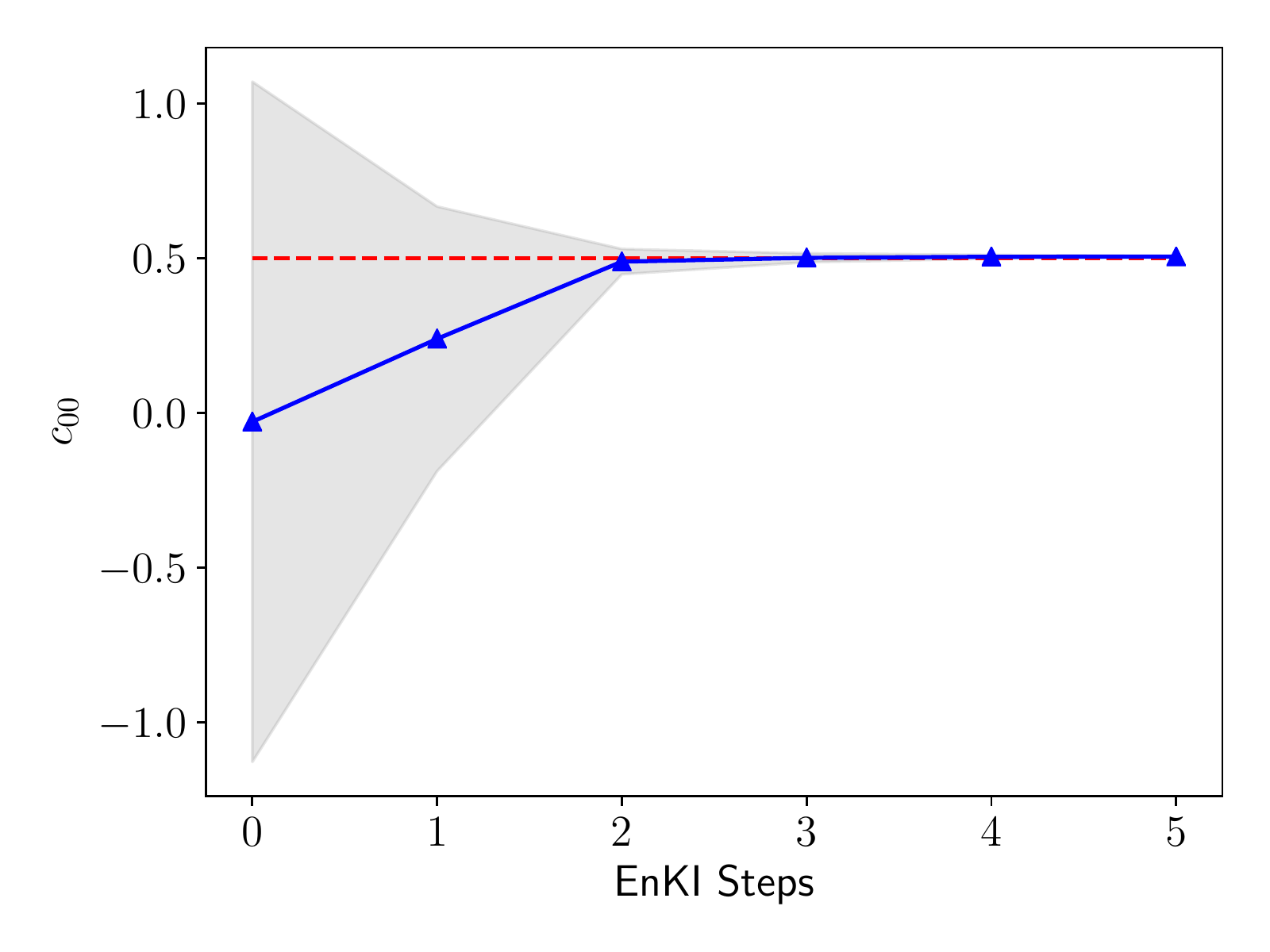}}
    \caption{Non-zero parameter for coalescence equations found by using (a) standard EKI and (b) sparse EKI.}
  \label{fig:param_ce}
\end{figure}

We further investigate the generalization capability of EKI identified systems by comparing the simulated trajectories of states with an initial condition different from the training set. It can be seen in Fig.~\ref{fig:states_ce} that non-zero redundant coefficients and the disagreement of $c_{00}$ among ensemble members do have a negative effect upon the generalization capability of the identified system, as the ensemble of simulated trajectories start to diverge after some time. On the other hand, the system identified by sparse EKI shows a much better agreement with the true trajectories in Fig.~\ref{fig:states_ce}b, even though the initial condition in this test is different from the one used in the training of the sparse model.

\begin{figure}[!htbp]
  \centering
  \includegraphics[width=0.6\textwidth]{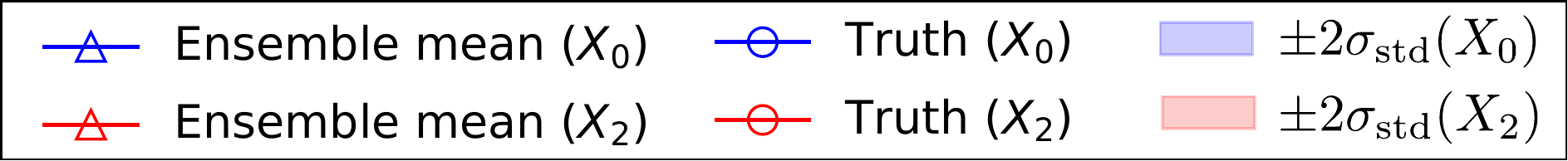}
  \subfloat[Standard EKI]{\includegraphics[width=0.49\textwidth]{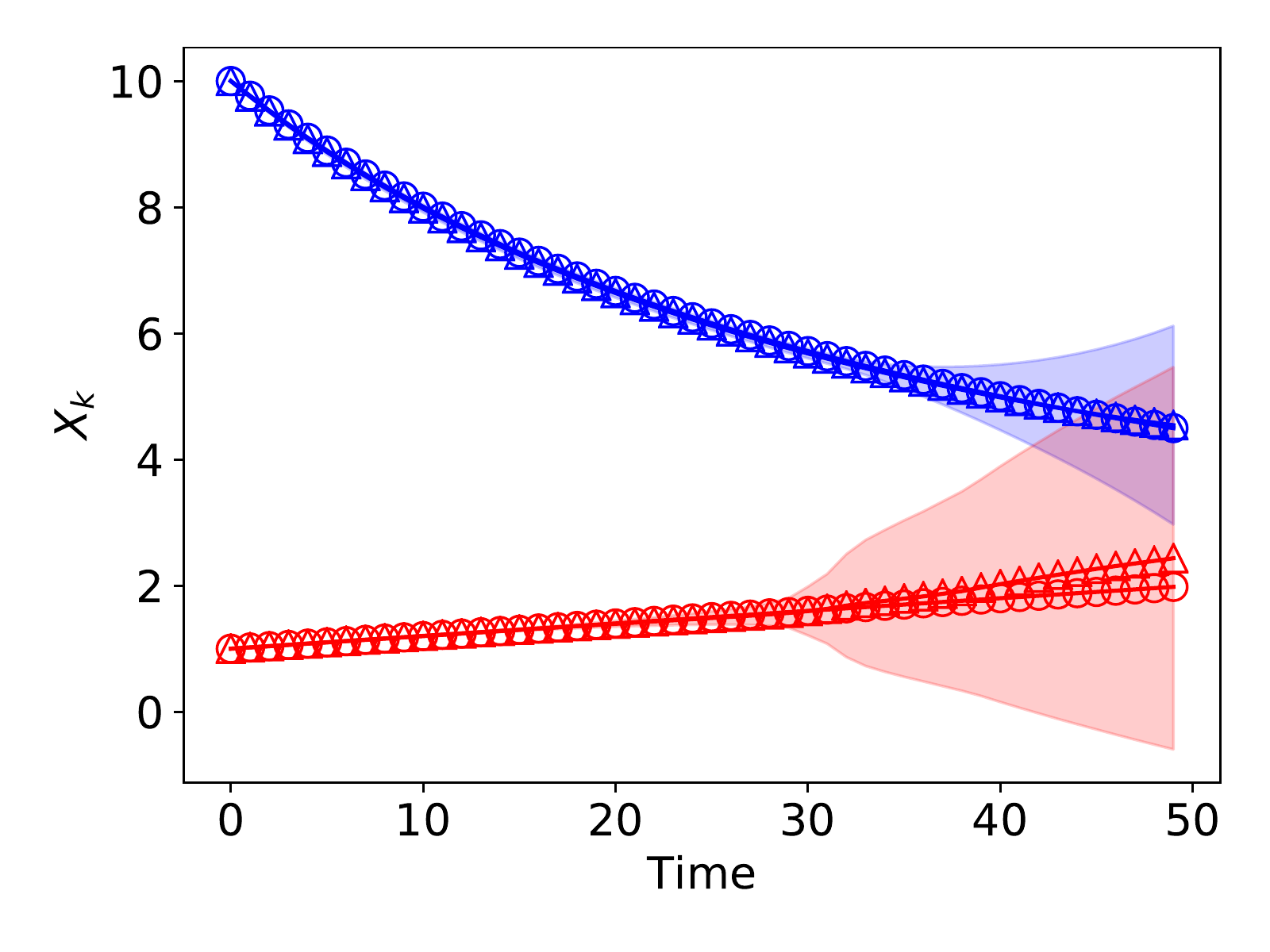}}
  \subfloat[Sparse EKI]{\includegraphics[width=0.49\textwidth]{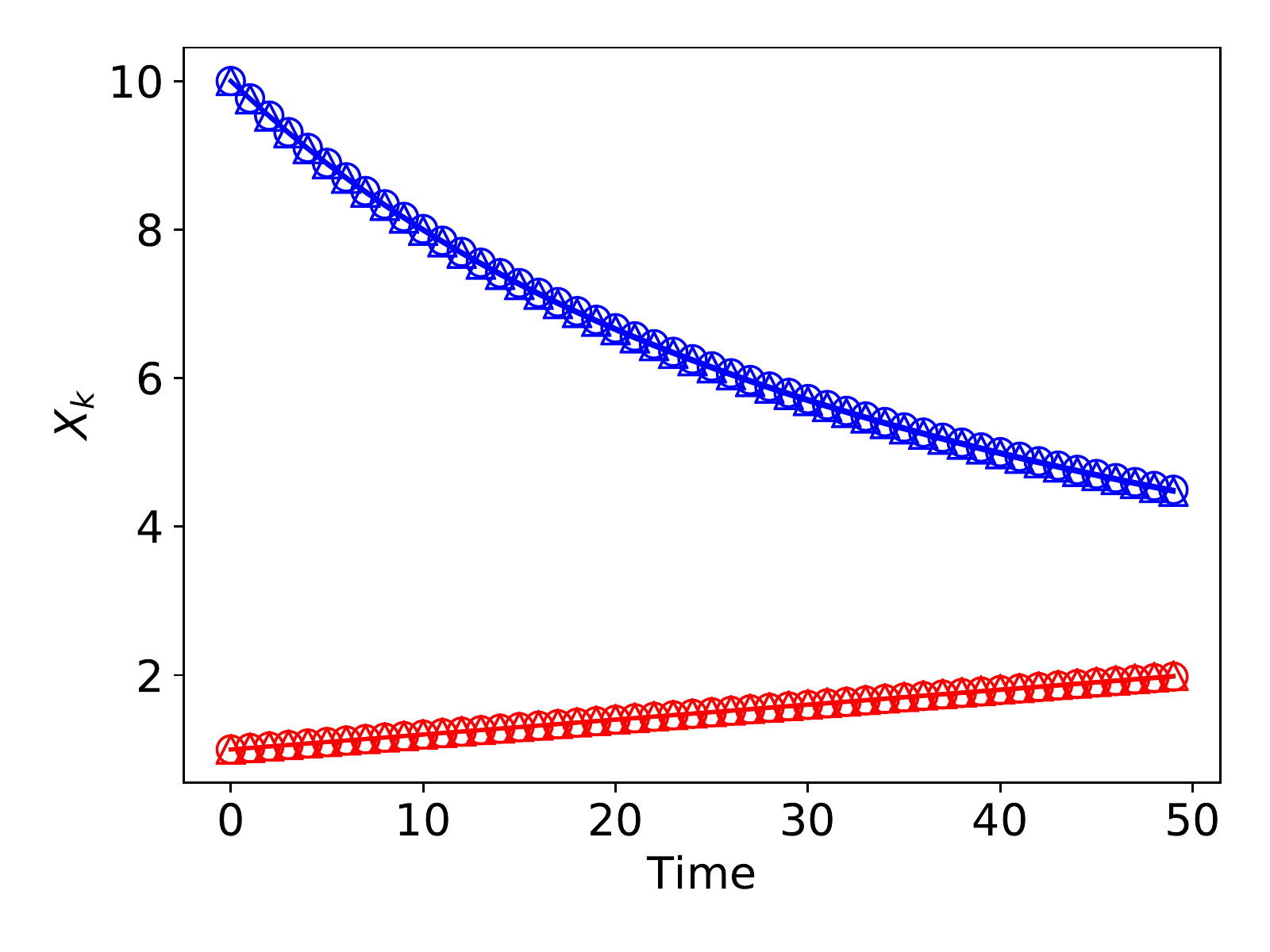}}
    \caption{Simulated states for coalescence equations with coefficients found by using (a) standard EKI and (b) sparse EKI. The initial condition of the training dataset is $(X_0,X_1,X_2)=(10,2,0.6)$, and the initial condition of the simulations here is $(X_0,X_1,X_2)=(10,2,1)$.}
  \label{fig:states_ce}
\end{figure}

\subsubsection{Higher-Order Closure Data}
We now perform a more realistic study, in which the data are generated by simulating the coalescence equations in \eqref{eq:cem} with $K=3$, $r=3$ and the Gamma distribution closure in \eqref{eq:gamma}. The goal is to fit a model with the same $r$ and closure but  different $K$, namely $K=2$, using EKI to estimate unknown coefficients $c_{ab}$. Results are presented in Figs.~\ref{fig:G_ce_order_3} to~\ref{fig:states_ce_order_3_multi_training}.

\begin{figure}[!htbp]
  \centering
  \includegraphics[width=0.2\textwidth]{G_legend}
  \subfloat[Standard EKI]
  {\includegraphics[width=0.49\textwidth]{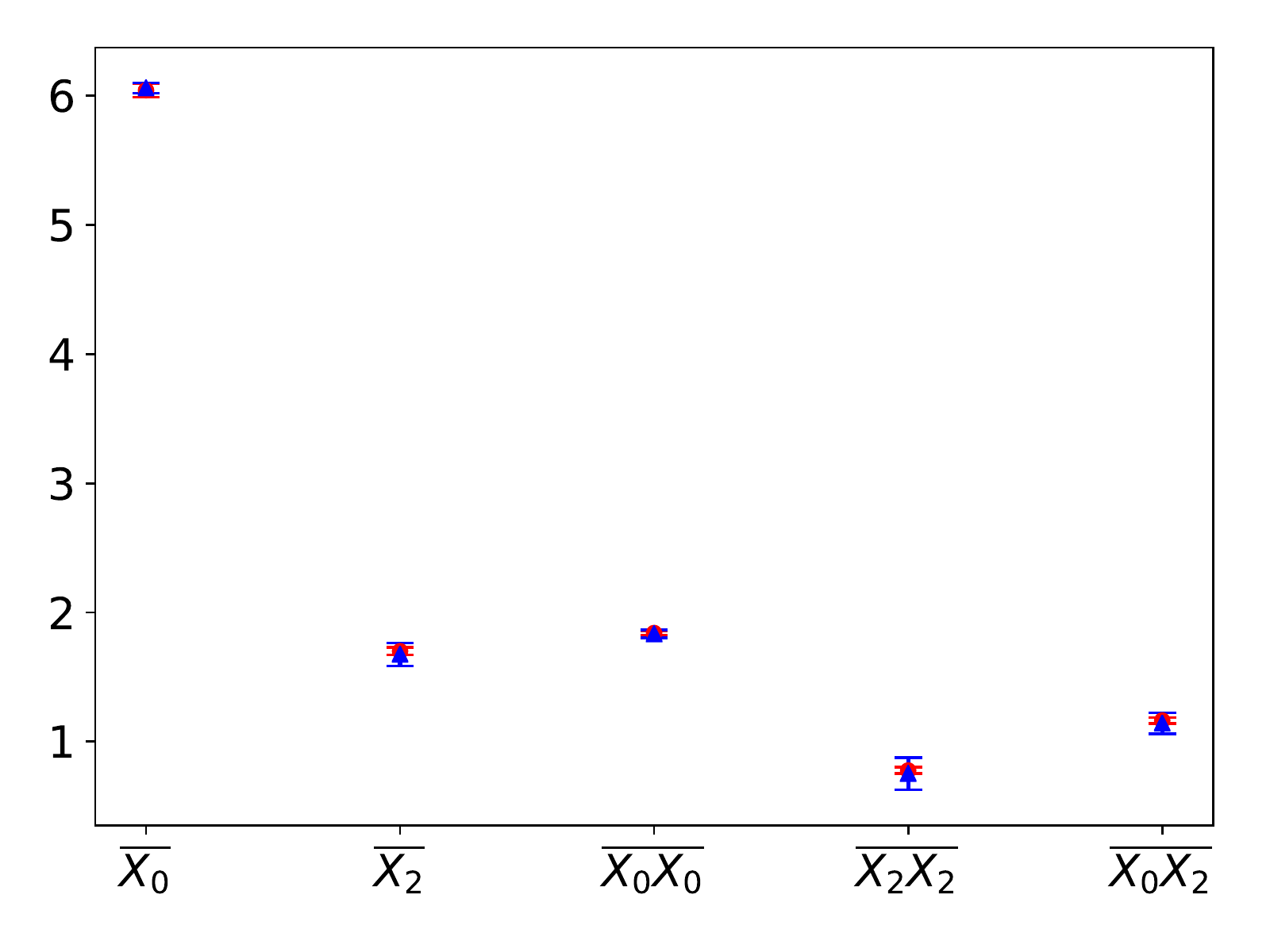}}  \subfloat[Sparse EKI]{\includegraphics[width=0.49\textwidth]{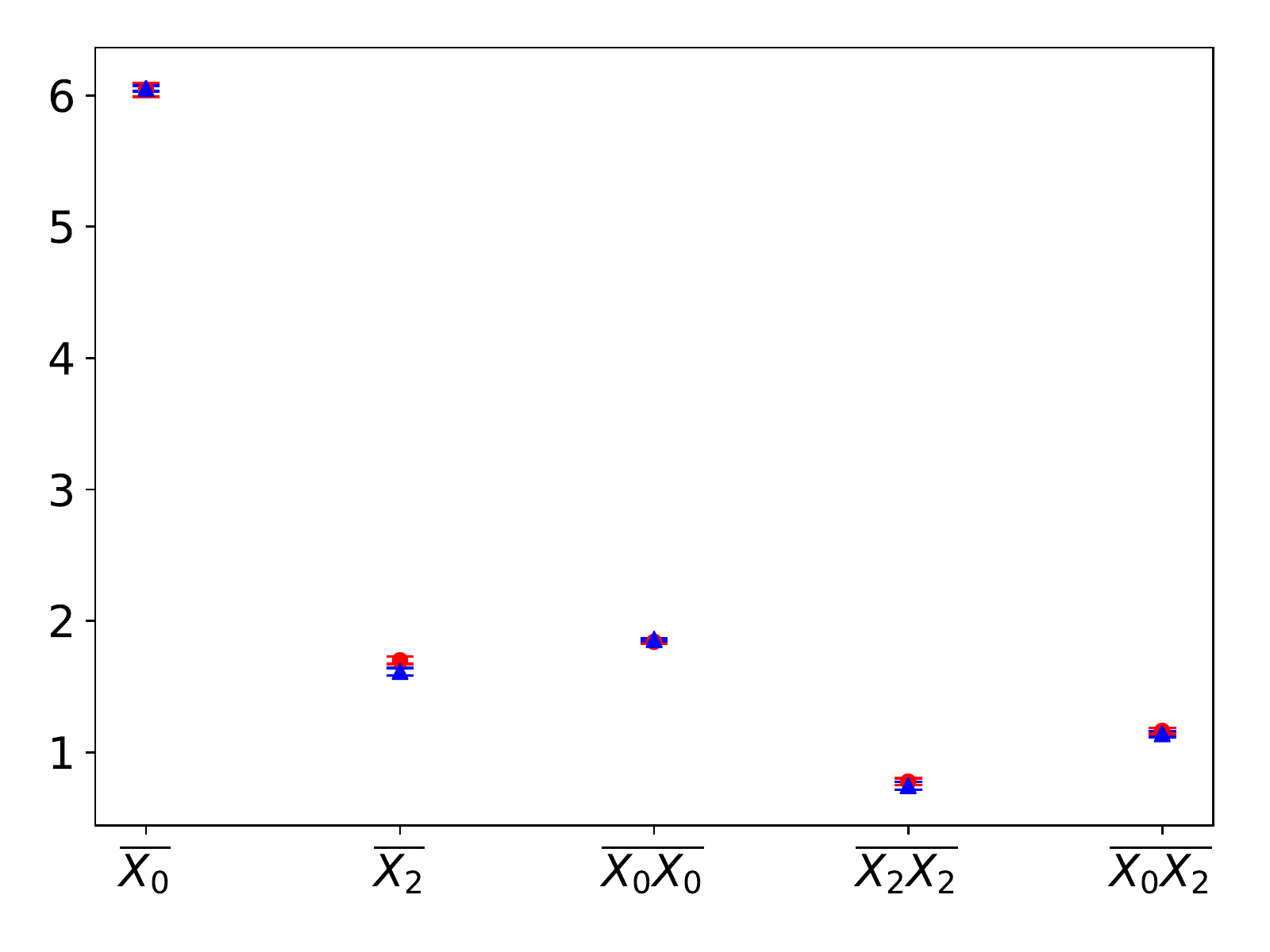}}
    \caption{First two moments of state $X$ for coalescence equations found by using (a) standard EKI and (b) sparse EKI. The data are generated by coalescence equations with a higher-order closure ($K=3$).}
  \label{fig:G_ce_order_3}
\end{figure}

The comparison of data in Fig.~\ref{fig:G_ce_order_3} shows a comparable performance of the identified system by using either standard EKI or sparse EKI. However, the $\ell_1$-norm of all coefficients is significantly different (as presented in (Fig.~\ref{fig:l1_norm_ce_order_3}). It shows that sparse EKI leads to a set of parameters with smaller $\ell_1$-norm. However, we cannot directly tell whether such a set of parameters is better, since the identified system has a closure distribution different from the one of the true system.

\begin{figure}[!htbp]
  \centering
  \includegraphics[width=0.4\textwidth]{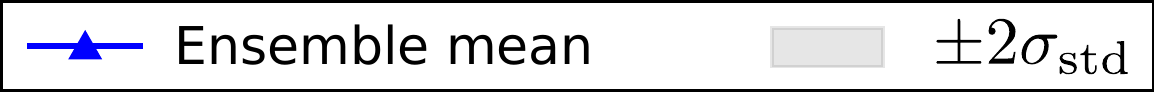}
  \subfloat[Standard EKI]{\includegraphics[width=0.49\textwidth]{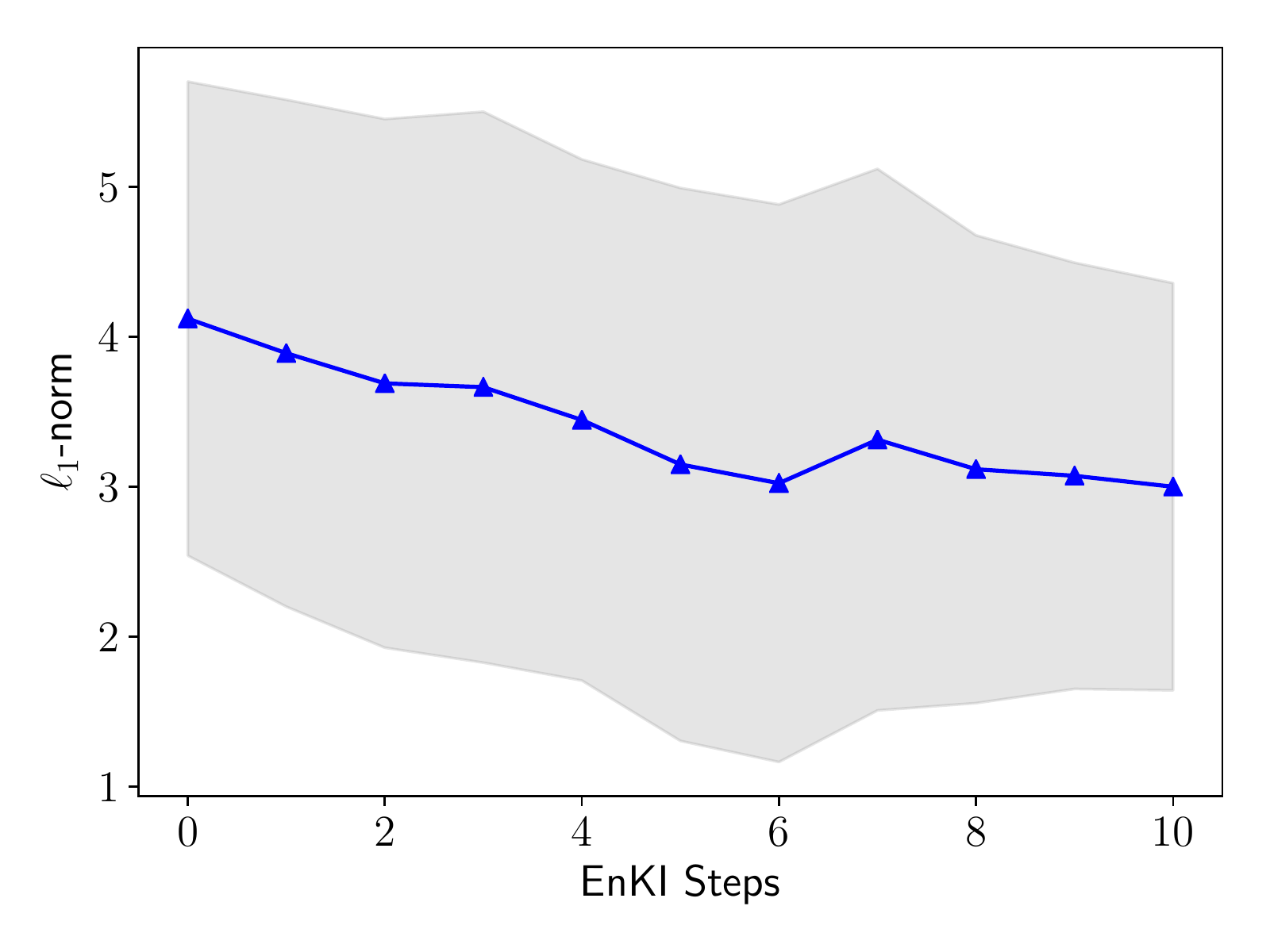}}
  \subfloat[Sparse EKI]{\includegraphics[width=0.49\textwidth]{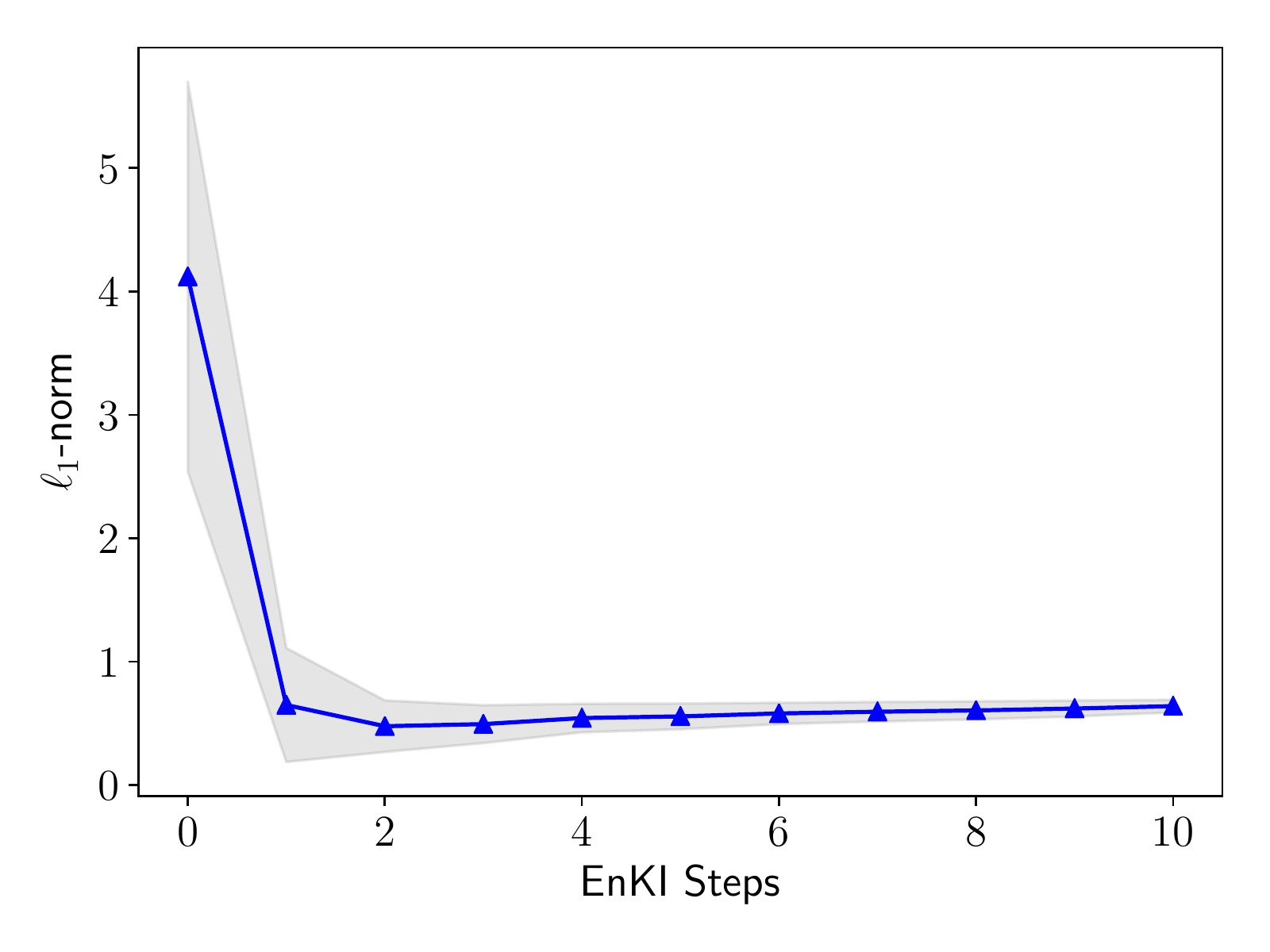}}
    \caption{$\ell_1$-norm of all coefficients for coalescence equations found by using (a) standard EKI and (b) sparse EKI. The data are generated by coalescence equations with a higher order closure ($K=3$).}
  \label{fig:l1_norm_ce_order_3}
\end{figure}

Therefore, we investigate the performances of the EKI identified systems by studying the generalization capability in Fig.~\ref{fig:states_ce_order_3}, i.e., simulating identified systems with an initial condition different from the training data. It is clear in Fig.~\ref{fig:states_ce_order_3} that the simulated trajectories of the system identified by sparse EKI generally have better agreement with the trajectories of the true system.

\begin{figure}[!htbp]
  \centering
  \includegraphics[width=0.6\textwidth]{state_ce_legend}
  \subfloat[Standard EKI]{\includegraphics[width=0.49\textwidth]{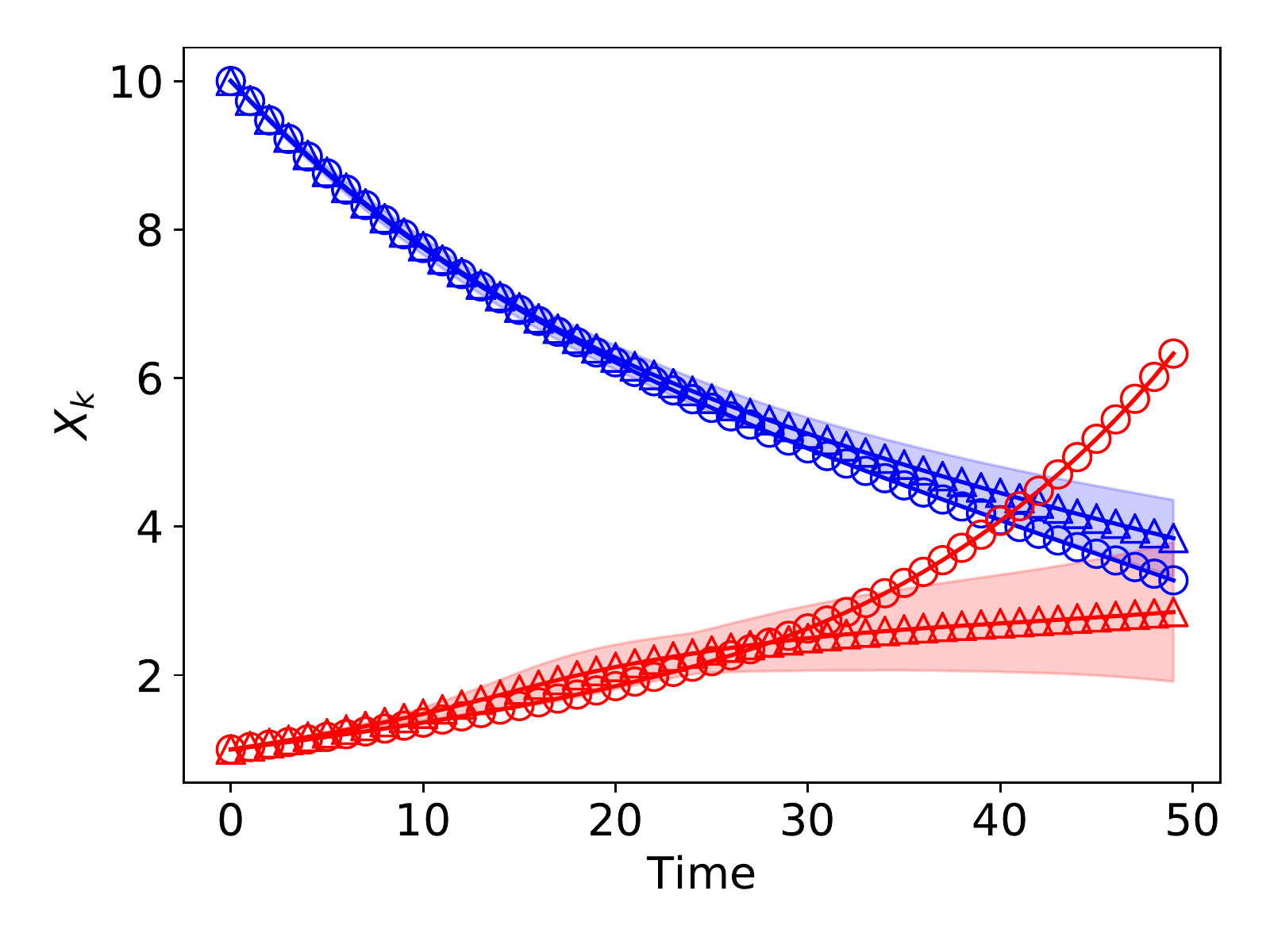}}
  \subfloat[Sparse EKI]{\includegraphics[width=0.49\textwidth]{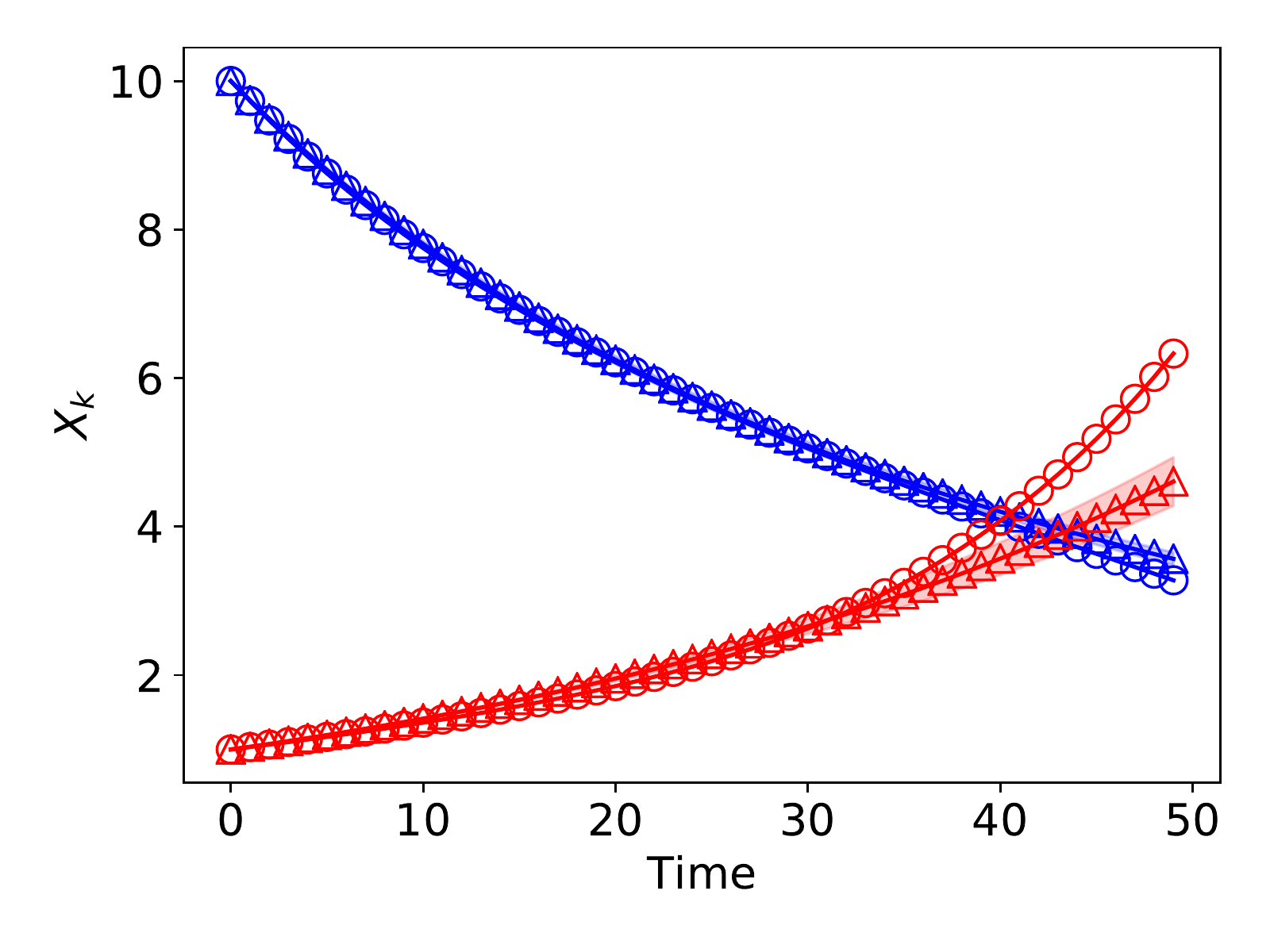}}
    \caption{Simulated states for coalescence equations with coefficients found by using (a) standard EKI and (b) sparse EKI. The data are generated by coalescence equations with a higher-order closure ($K=3$). The initial condition of the training dataset is $(X_0,X_1,X_2)=(10,2,0.6)$, and the initial condition of the simulations here is $(X_0,X_1,X_2)=(10,2,1)$.}
  \label{fig:states_ce_order_3}
\end{figure}

We further try to improve the performance of the identified system by using training sets with different initial conditions, noting that the coalescence equations are not ergodic, and the initial condition does have an effect on the system prediction. Specifically, we use two training sets with initial conditions $(X_0,X_1,X_2)=(10,2,0.6)$ and $(X_0,X_1,X_2)=(10,2,2)$, and we then test the performance of the identified systems with a different initial condition $(X_0,X_1,X_2)=(10,2,1)$. The results in Fig.~\ref{fig:states_ce_order_3_multi_training} show that the identified systems with multiple training sets provide better agreement of simulated trajectories with the true system, and the improvement of performance is more significant for the system identified by standard EKI. This is not surprising since we still fit 9 unknown coefficients here but with twice the data (10 elements in total).

\begin{figure}[!htbp]
  \centering
  \includegraphics[width=0.6\textwidth]{state_ce_legend}
  \subfloat[Standard EKI]{\includegraphics[width=0.49\textwidth]{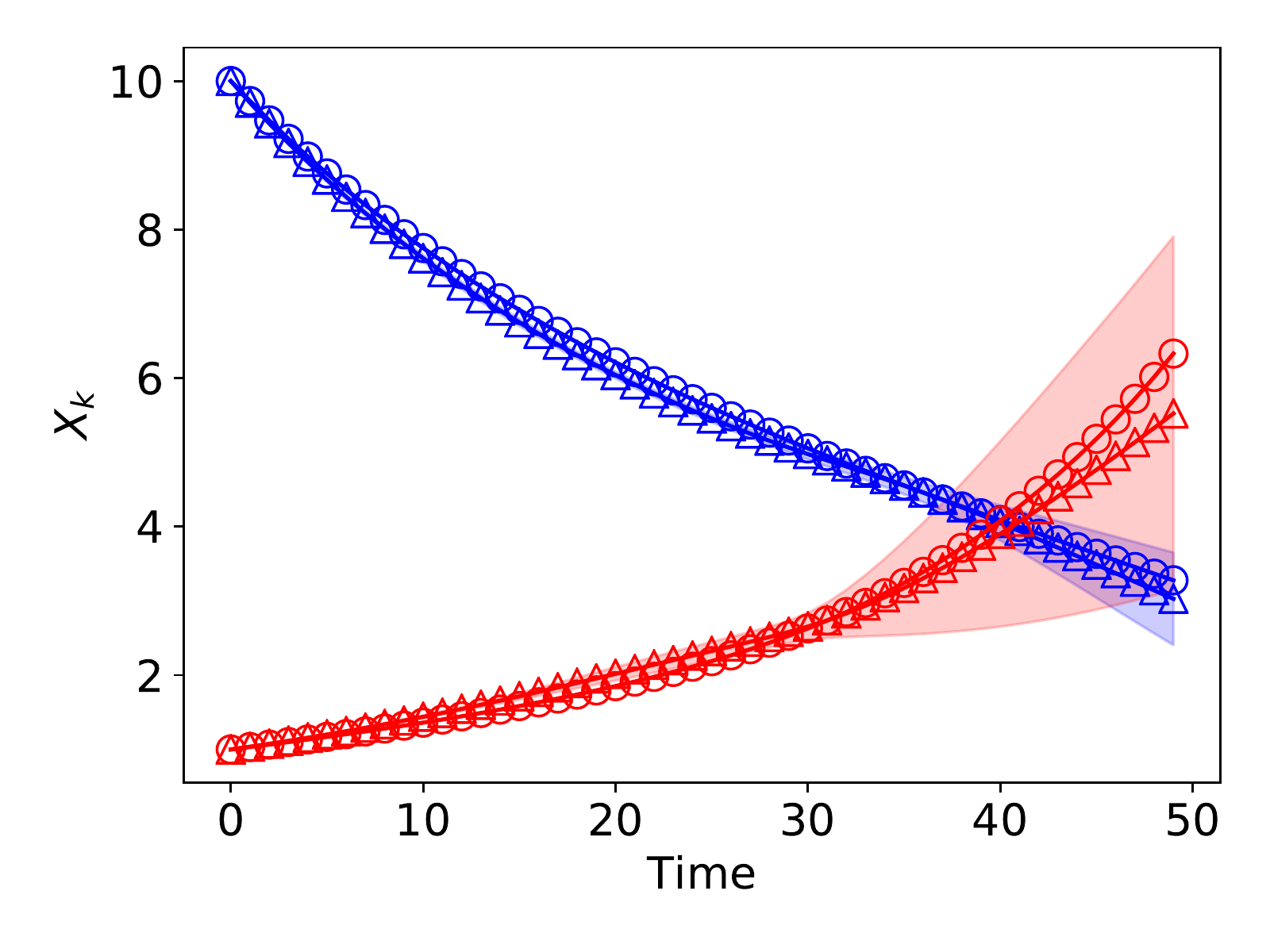}}
  \subfloat[Sparse EKI]{\includegraphics[width=0.49\textwidth]{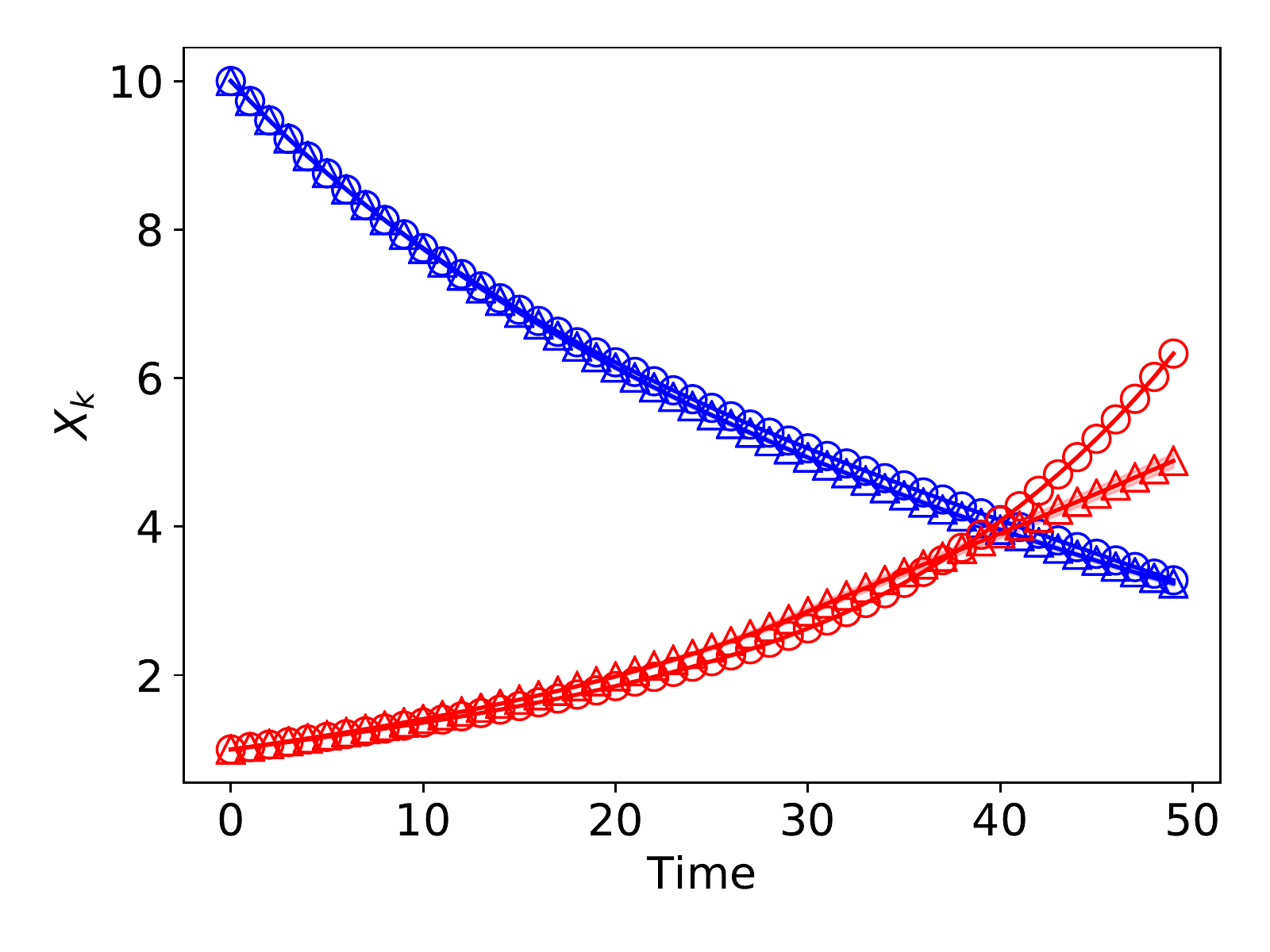}}
    \caption{Simulated states for coalescence equations with coefficients found by using (a) standard EKI and (b) sparse EKI. Two datasets are generated by coalescence equations with a higher order closure ($K=3$). The initial conditions of the training datasets are $(X_0,X_1,X_2)=(10,2,0.6)$ and $(X_0,X_1,X_2)=(10,2,2)$, and the initial condition of the simulations here is $(X_0,X_1,X_2)=(10,2,1)$.}
  \label{fig:states_ce_order_3_multi_training}
\end{figure}

\subsubsection{Gamma Versus Exponential Closure Data}
We perform a further study where, now, the true system and the modeled system have different closures. Specifically, the data are generated by simulating the coalescence equations in \eqref{eq:cem} with $K=2$, $r=3$, and with the exponential distribution closure in \eqref{eq:exp}. The goal is to fit a model with the same $K$ and $r$ but with a Gamma distribution closure, using EKI to estimate unknown coefficients $c_{ab}$. Results are presented in Figs.~\ref{fig:G_ce_exp} to~\ref{fig:states_ce_exp_multi_training}.

\begin{figure}[!htbp]
  \centering
  \includegraphics[width=0.2\textwidth]{G_legend}
  \subfloat[Standard EKI]{\includegraphics[width=0.49\textwidth]{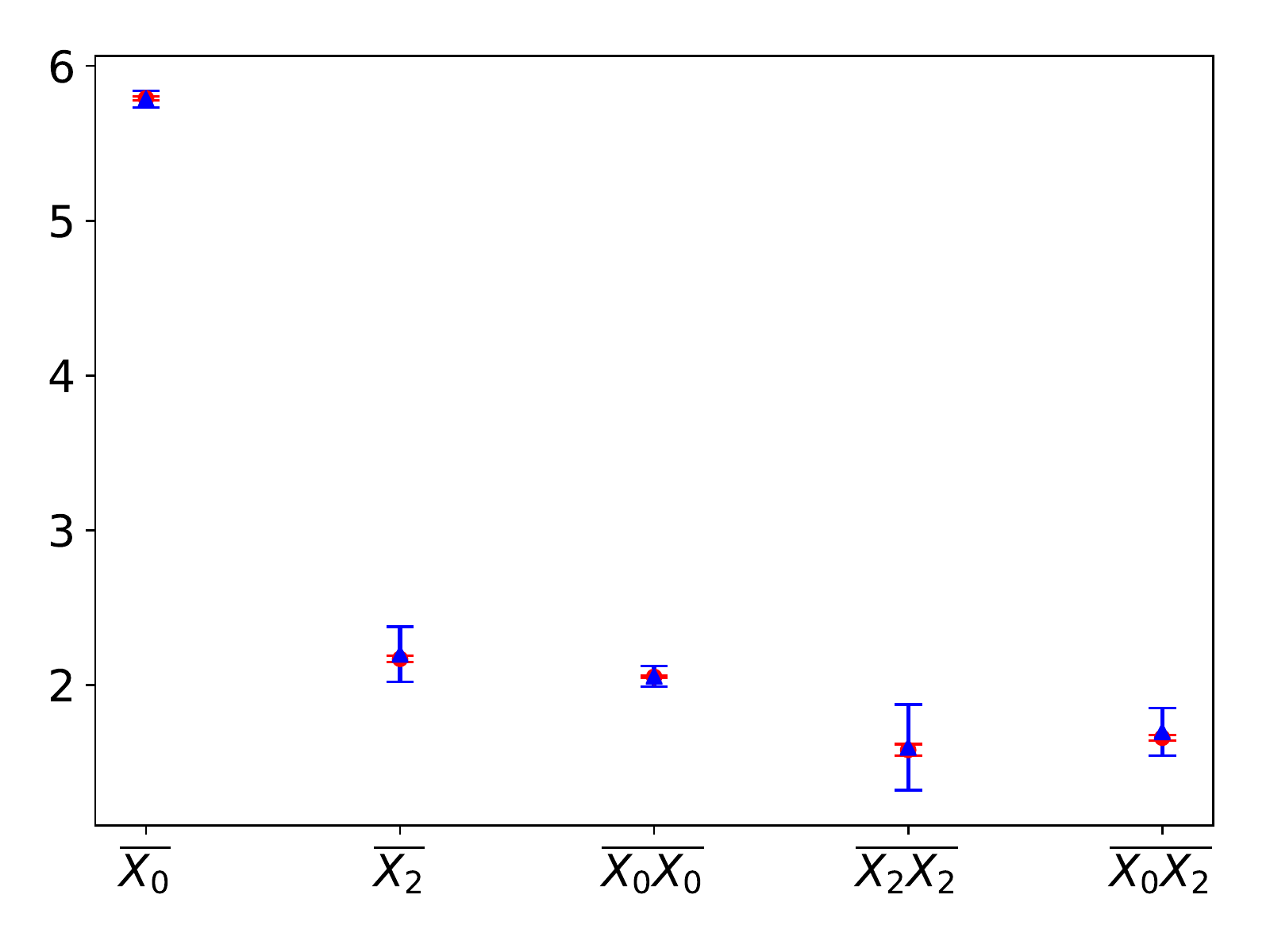}}
  \subfloat[Sparse EKI]{\includegraphics[width=0.49\textwidth]{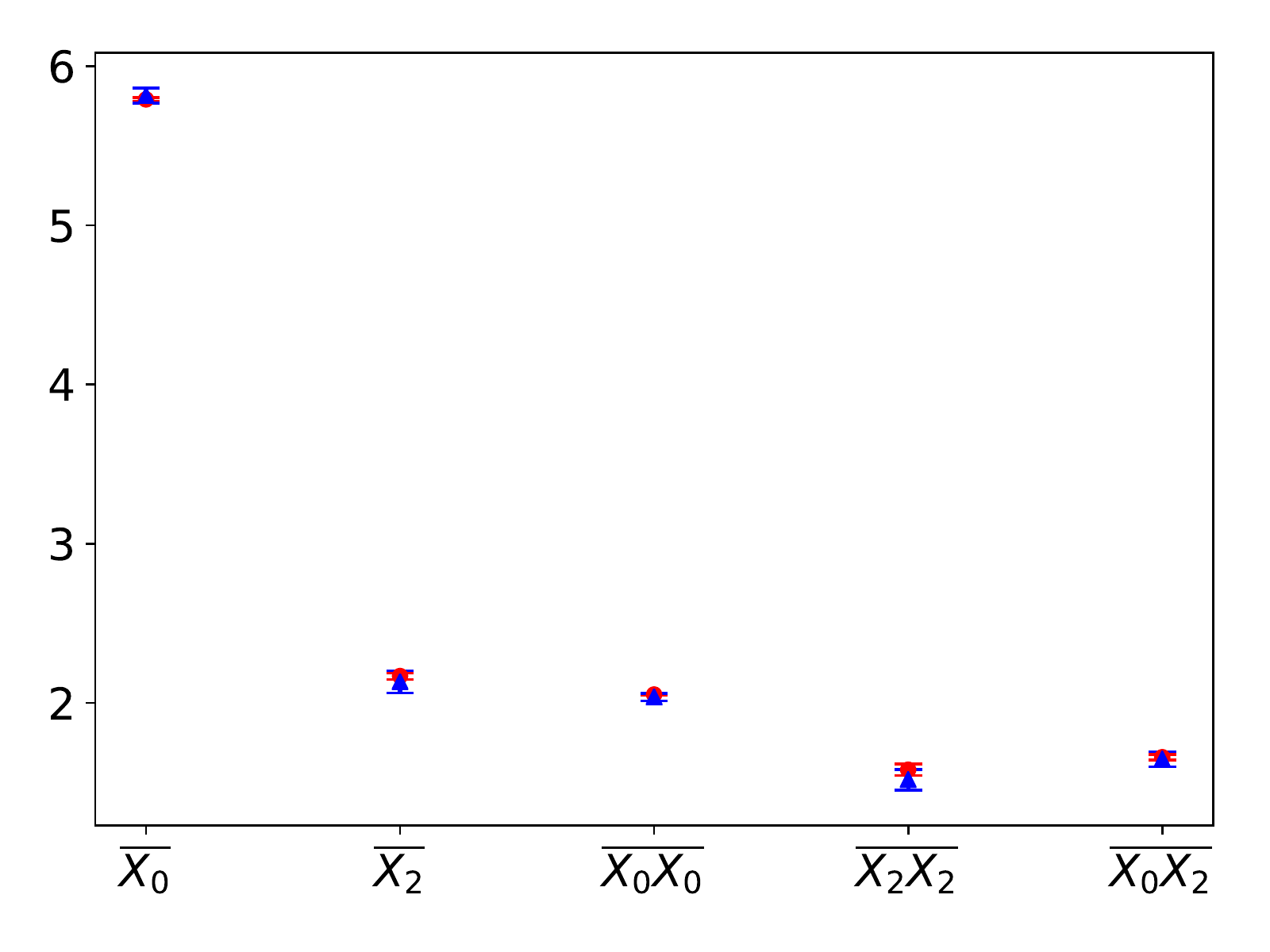}}
    \caption{First two moments of state $X$ for coalescence equations found by using (a) standard EKI and (b) sparse EKI. The data are generated by coalescence equations with an exponential closure.}
  \label{fig:G_ce_exp}
\end{figure}

The comparison of data in Fig.~\ref{fig:G_ce_exp} shows larger uncertainties for the results of standard EKI, while the ensemble mean agrees relatively well with the data of the true system. The larger uncertainties can also be seen in Fig.~\ref{fig:l1_norm_ce_exp}: the $\ell_1$-norm of all coefficients estimated using standard EKI remains relatively large, while the sparse EKI identifies another set of parameters with smaller $\ell_1$-norm.

\begin{figure}[!htbp]
  \centering
  \includegraphics[width=0.4\textwidth]{params_legend_sigma_no_truth}
  \subfloat[Standard EKI]{\includegraphics[width=0.49\textwidth]{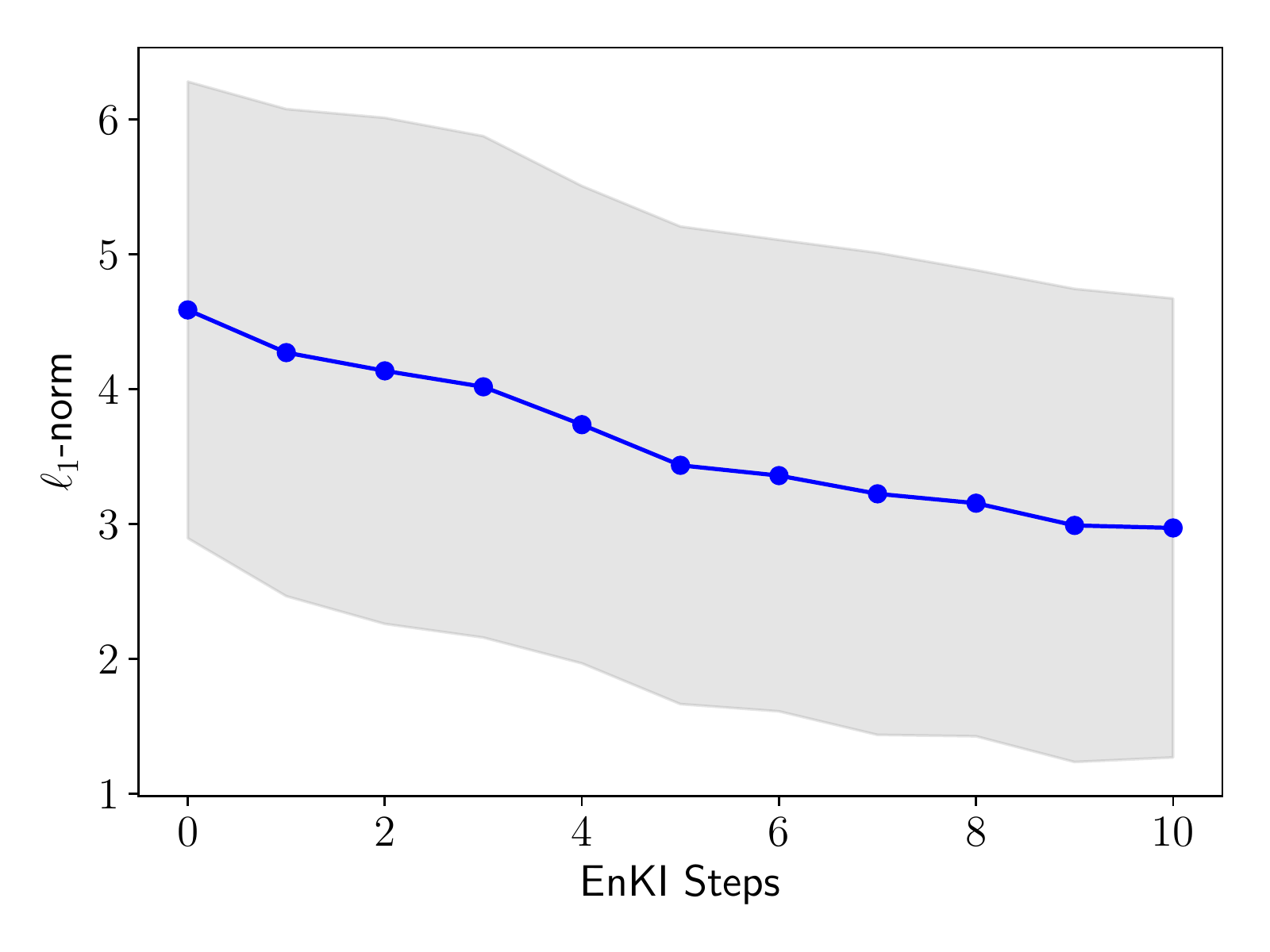}}
  \subfloat[Sparse EKI]{\includegraphics[width=0.49\textwidth]{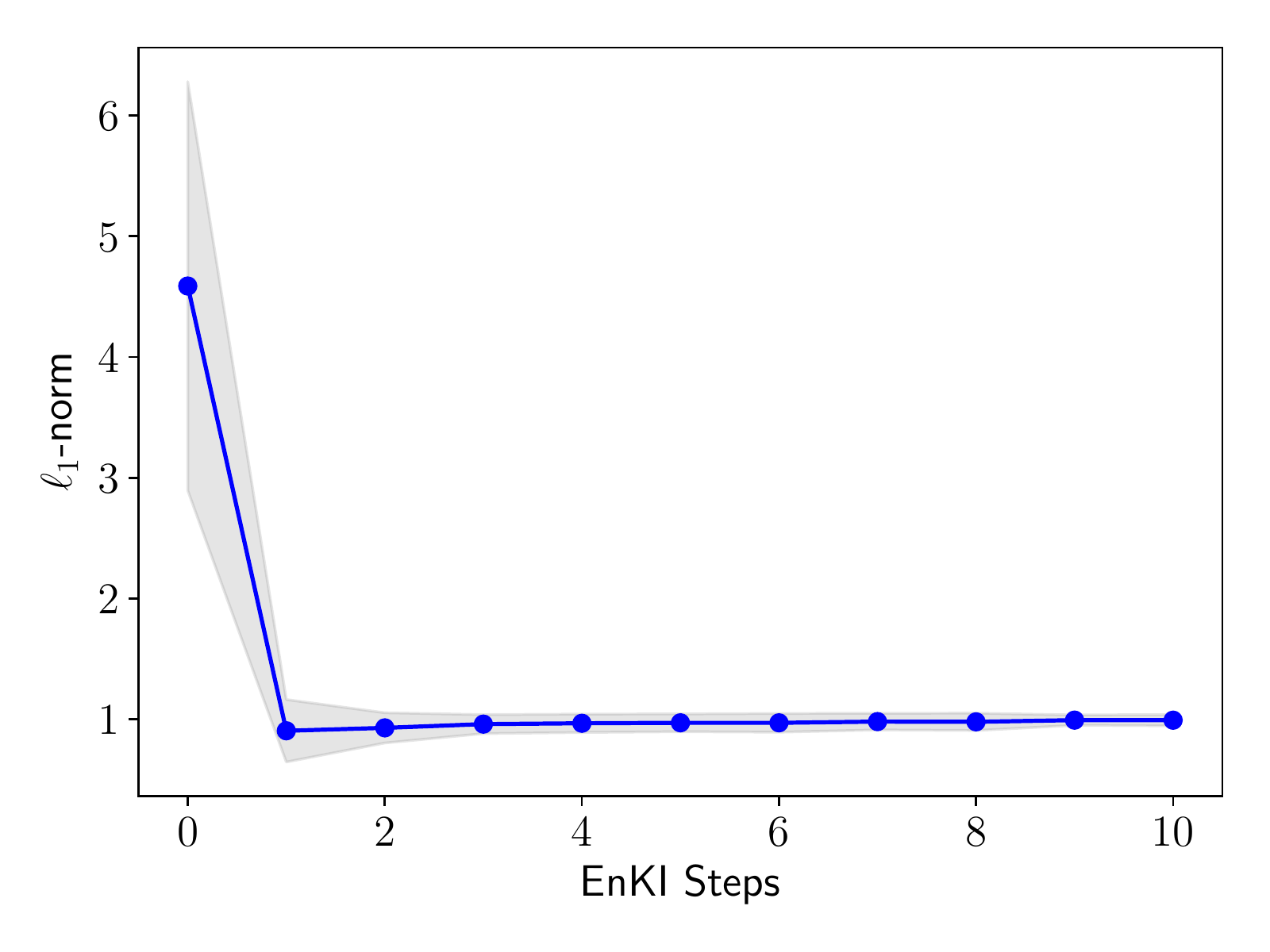}}
    \caption{$\ell_1$-norm of all coefficients for coalescence equations found by using (a) standard EKI and (b) sparse EKI. The data are generated by coalescence equations with an exponential closure.}
  \label{fig:l1_norm_ce_exp}
\end{figure}

We then investigate the performance of EKI identified systems by simulating state trajectories with an initial condition different from the training data. The comparison of simulated trajectories is presented in Fig.~\ref{fig:states_ce_exp}. Although the ensemble mean of either standard EKI or sparse EKI has similar agreement with the trajectories of the true system, there are also larger uncertainties in the ensemble of simulated trajectories for the results of standard EKI.

\begin{figure}[!htbp]
  \centering
  \includegraphics[width=0.6\textwidth]{state_ce_legend}
  \subfloat[Standard EKI]{\includegraphics[width=0.49\textwidth]{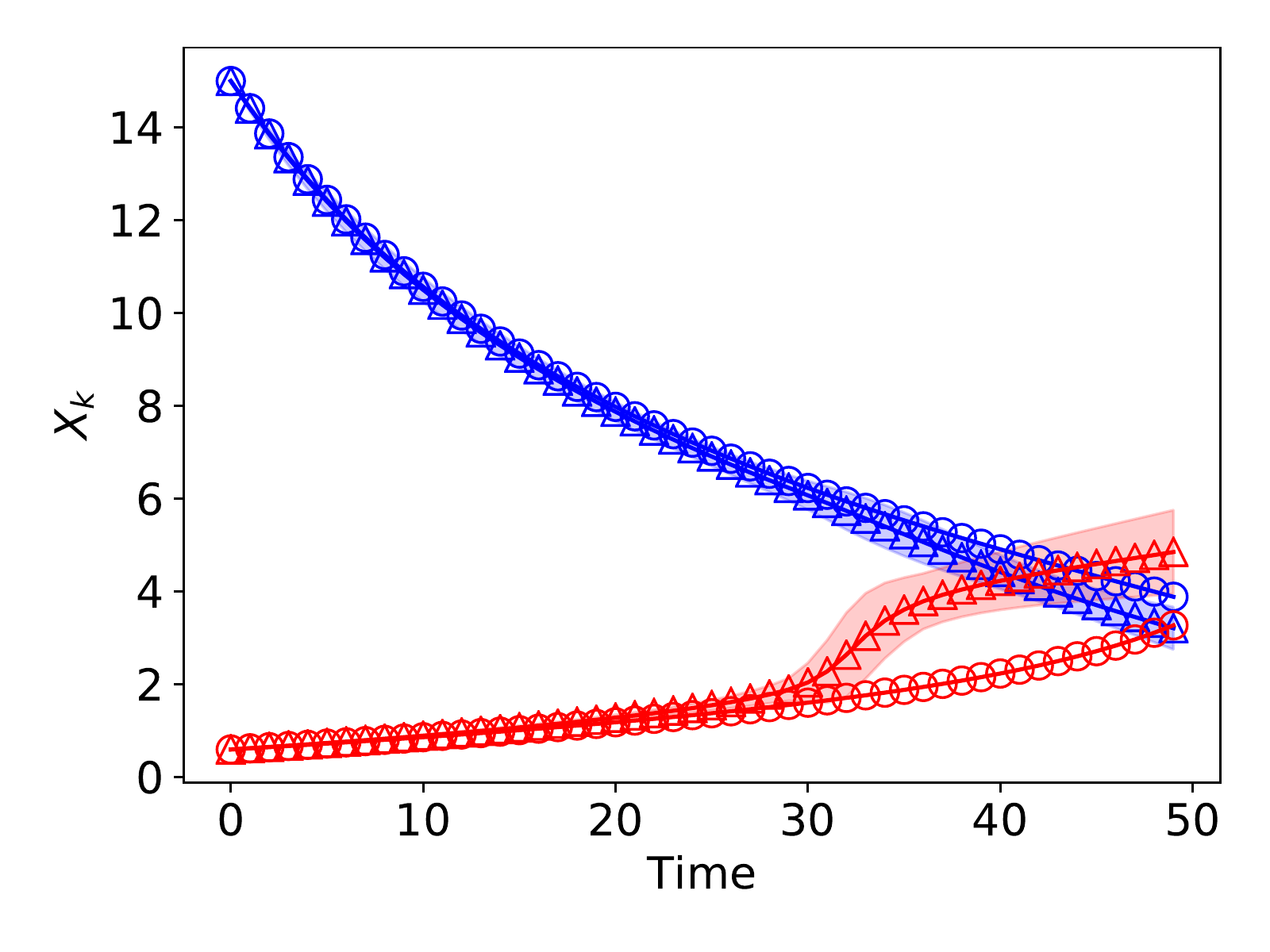}}
  \subfloat[Sparse EKI]{\includegraphics[width=0.49\textwidth]{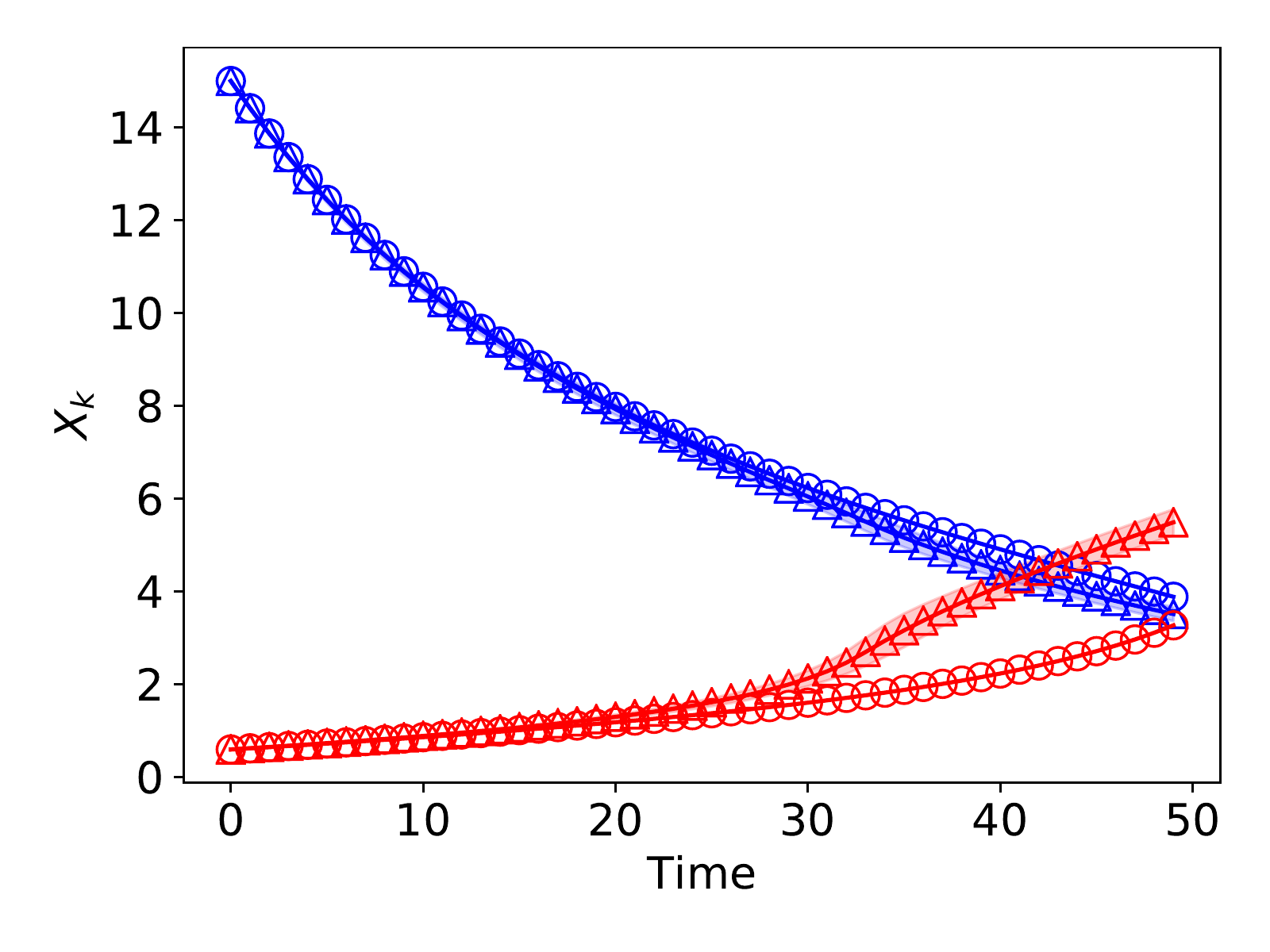}}
    \caption{Simulated states for coalescence equations with coefficients found by using (a) standard EKI and (b) sparse EKI.  The data are generated by coalescence equations with an exponential closure. The initial condition of the training dataset is $(X_0,X_1,X_2)=(10,2,0.6)$, and the initial condition of the simulations here is $(X_0,X_1,X_2)=(15,2,0.6)$.}
  \label{fig:states_ce_exp}
\end{figure}

We further demonstrate that the performance of EKI identified systems can be improved by using multiple training sets. Specifically, two training sets are used with initial conditions $(X_0,X_1,X_2)=(10,2,0.6)$ and $(X_0,X_1,X_2)=(20,2,0.6)$, and the initial condition of the test presented in Fig.~\ref{fig:states_ce_exp_multi_training} is $(X_0,X_1,X_2)=(15,2,0.6)$. Compared to the trajectories of EKI identified systems with a single training set in Fig.~\ref{fig:states_ce_exp}, the agreement of simulated trajectories with true ones is generally better in Fig.~\ref{fig:states_ce_exp_multi_training} when multiple training set being used.

\begin{figure}[!htbp]
  \centering
  \includegraphics[width=0.6\textwidth]{state_ce_legend}
  \subfloat[Standard EKI]{\includegraphics[width=0.49\textwidth]{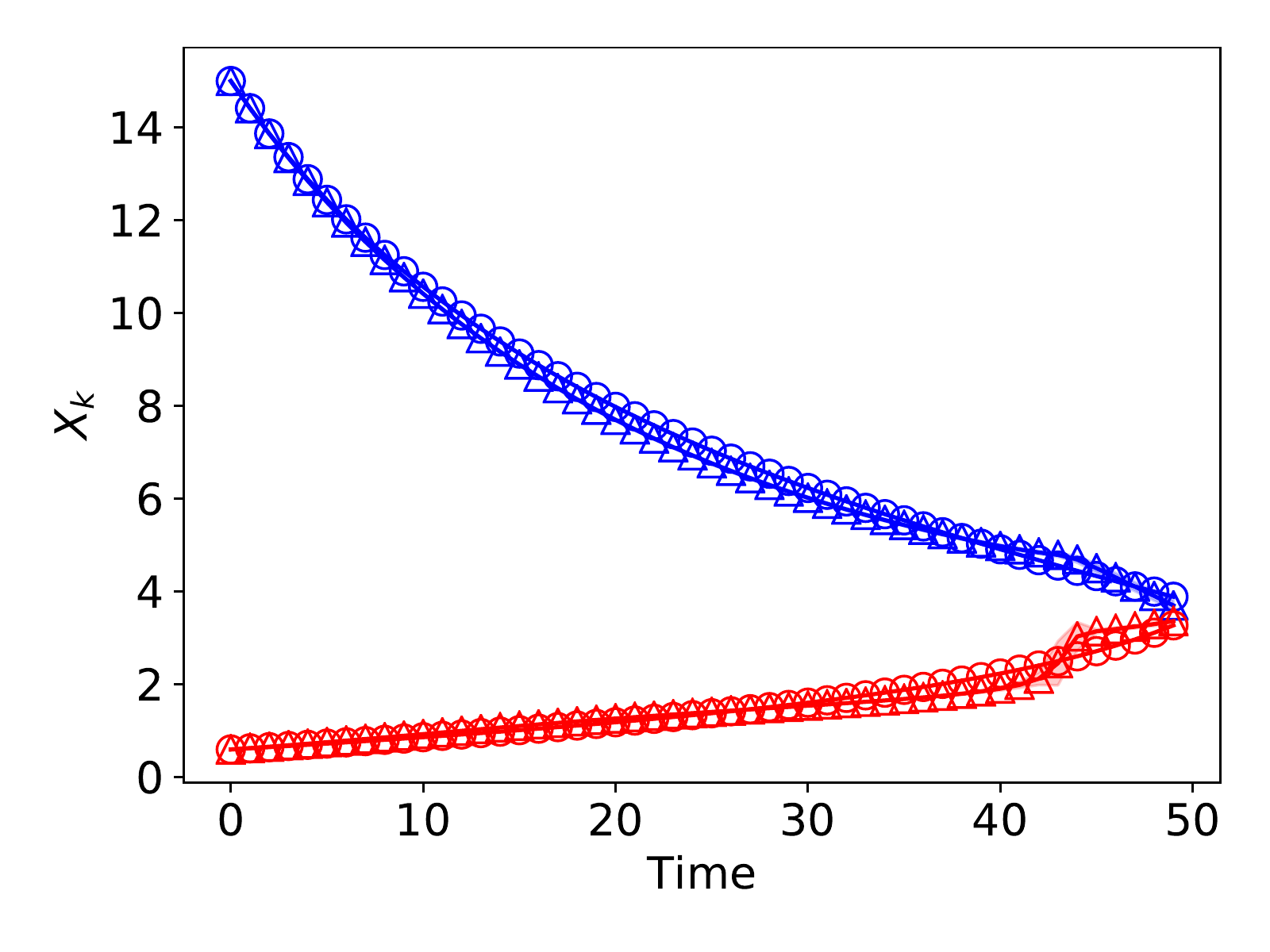}}
  \subfloat[Sparse EKI]{\includegraphics[width=0.49\textwidth]{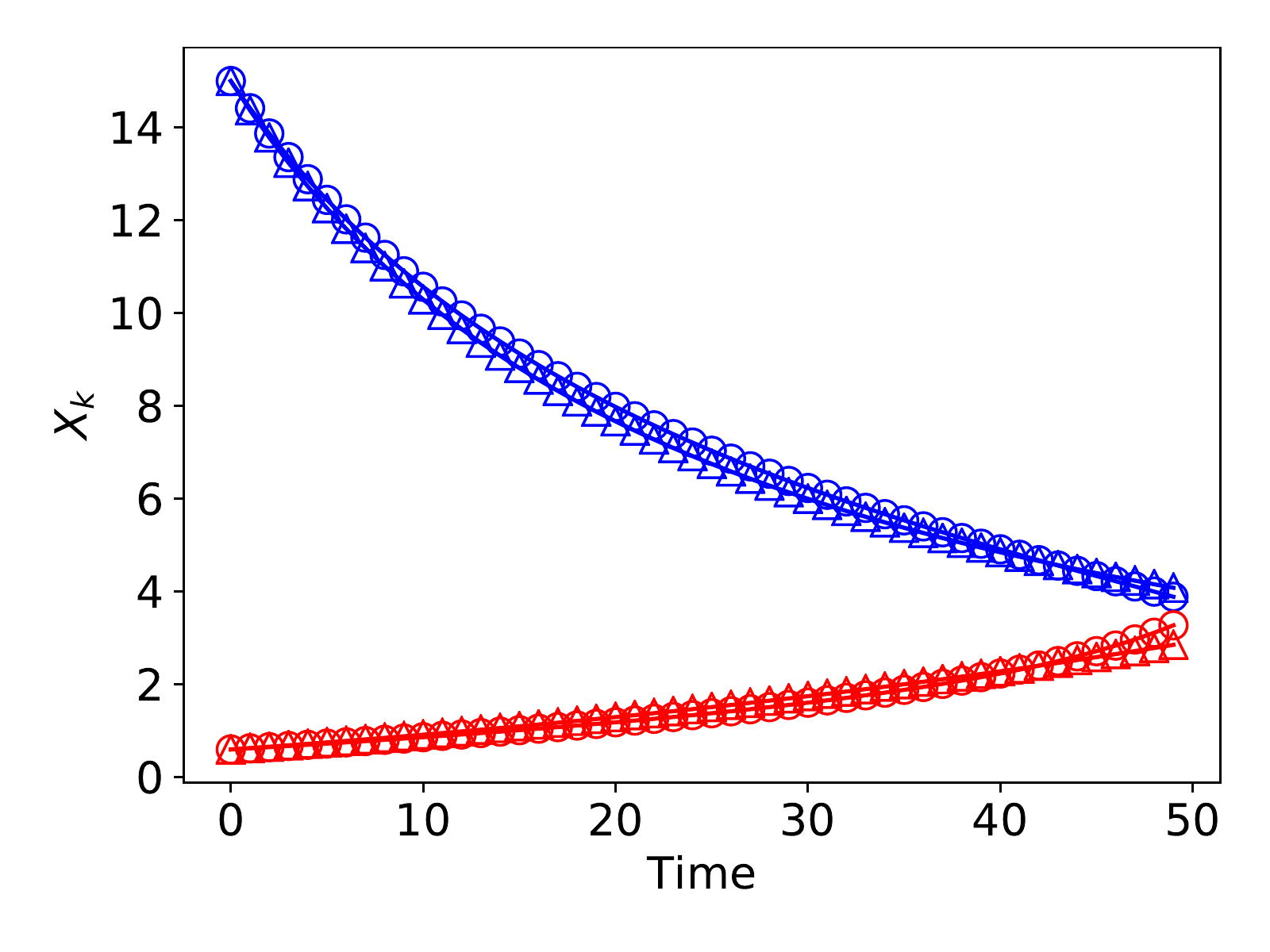}}
    \caption{Simulated states for coalescence equations with coefficients found by using (a) standard EKI and (b) sparse EKI.  Two datasets are generated by coalescence equations with an exponential closure. The initial conditions of the training datasets are $(X_0,X_1,X_2)=(10,2,0.6)$ and $(X_0,X_1,X_2)=(20,2,0.6)$, and the initial condition of the simulations here is $(X_0,X_1,X_2)=(15,2,0.6)$.}
  \label{fig:states_ce_exp_multi_training}
\end{figure}

\subsection{Kuramoto-Sivashinsky Equation}
\label{ssec:KS}
We conclude the numerical study by applying the sparse EKI to fit 
the Kuramoto-Sivashinsky Equation \eqref{eq:need1}. We first observe 
that applying the standard EKI approach to learn the equation,
from within the class represented in \eqref{eq:need2} and using
the same data detailed below for application of sparse EKI,
leads to a solution as presented in Table~\ref{tab:coeffs-KS}.
It fails to find a solution from within the class \eqref{eq:need2}
that is close to the data-generating equation \eqref{eq:need1},
clearly motivating the need for the sparse EKI method, 
results from which are also shown in the same table.

\begin{table}[htbp]
\caption{Mean value of coefficients estimated by standard EKI.}
\centering
\begin{tabular}{c|ccccc}
\hline
Linear terms & $\partial^1_x u$ & $\partial^2_x u$ & $\partial^3_x u$ & $\partial^4_x u$ & $\partial^5_x u$ \\
Coefficient (Standard EKI) & -0.330 & 1.385 & 0.659 & 1.262 & -0.130 \\
Coefficient (Sparse EKI) & 0 & 1.020 & 0 & 1.020 & 0 \\
Coefficient (Truth) & 0 & 1 & 0 & 1 & 0 \\
\hline
Non-linear terms & $u \partial_x u$ & $u^{2} \partial_x u$ & $u^{3} \partial_x u$ & $u^{4} \partial_x u$ & $u^{5} \partial_x u$ \\
Coefficient (Standard EKI) & 1.420 & 0.224 & -0.455 & 0.104 & 0.149 \\
Coefficient (Sparse EKI) & 1.024 & 0 & 0 & 0 & 0 \\
Coefficient (Truth) & 1 & 0 & 0 & 0 & 0 \\
\hline
\end{tabular}
\label{tab:coeffs-KS}
\end{table}

We now turn to the sparse setting. Working within the class
of models \eqref{eq:need2} requires 10 unknown coefficients $\{\alpha_j,\beta_j\}_{j=1}^5$ to be learnt. To do this we will use a data vector $y$ of dimension 114. Specifically, the data vector consists of: (i) the first to fourth moments at eight locations $\{x_j\}_{j=1}^8$ that are evenly distributed across the range of $x$,
namely $\{\overline{u_j},\{\overline{u_ju_k}\}_{k=1}^8,\overline{u_ju_ju_j},\overline{u_ju_ju_ju_j}\}_{j=1}^8$,  giving a total moment-data vector of size $8+36+8+8=60$; (ii) temporal autocorrelation  of $u(x_j,t)$ at the same eight locations of $x$ and using five points
in time, giving a total autocorrelation-data vector of size $40$; and (iii) the time-averaged spatial correlation function at
$14$ locations in space $x$. The time used for averaging is $T=1000$ and all simulations are performed on the torus $[0,L]$, with $L=128$. Details of the methods employed to solve the extended K-S equation
\eqref{eq:need2} are detailed in \ref{sec:KS_solver}, including
the Fourier-based approach to finding the spatial correlation function.

Results of the first sparse EKI (with all ten basis functions) are presented in Figs.~\ref{fig:G_KS_1st_EKI} and~\ref{fig:params_KS_1st_EKI}, and results of the second sparse EKI (using a reduced number (four) of basis functions,  informed by the
first phase of the algorithm, using the approach discussed in subsection~\ref{ssec:sEKI}), are presented in Figs.~\ref{fig:G_KS_2nd_EKI} and~\ref{fig:params_KS_2nd_EKI}.

\begin{figure}[!htbp]
  \centering
  \includegraphics[width=0.2\textwidth]{G_legend}
  \subfloat[Moments]{\includegraphics[width=0.49\textwidth]{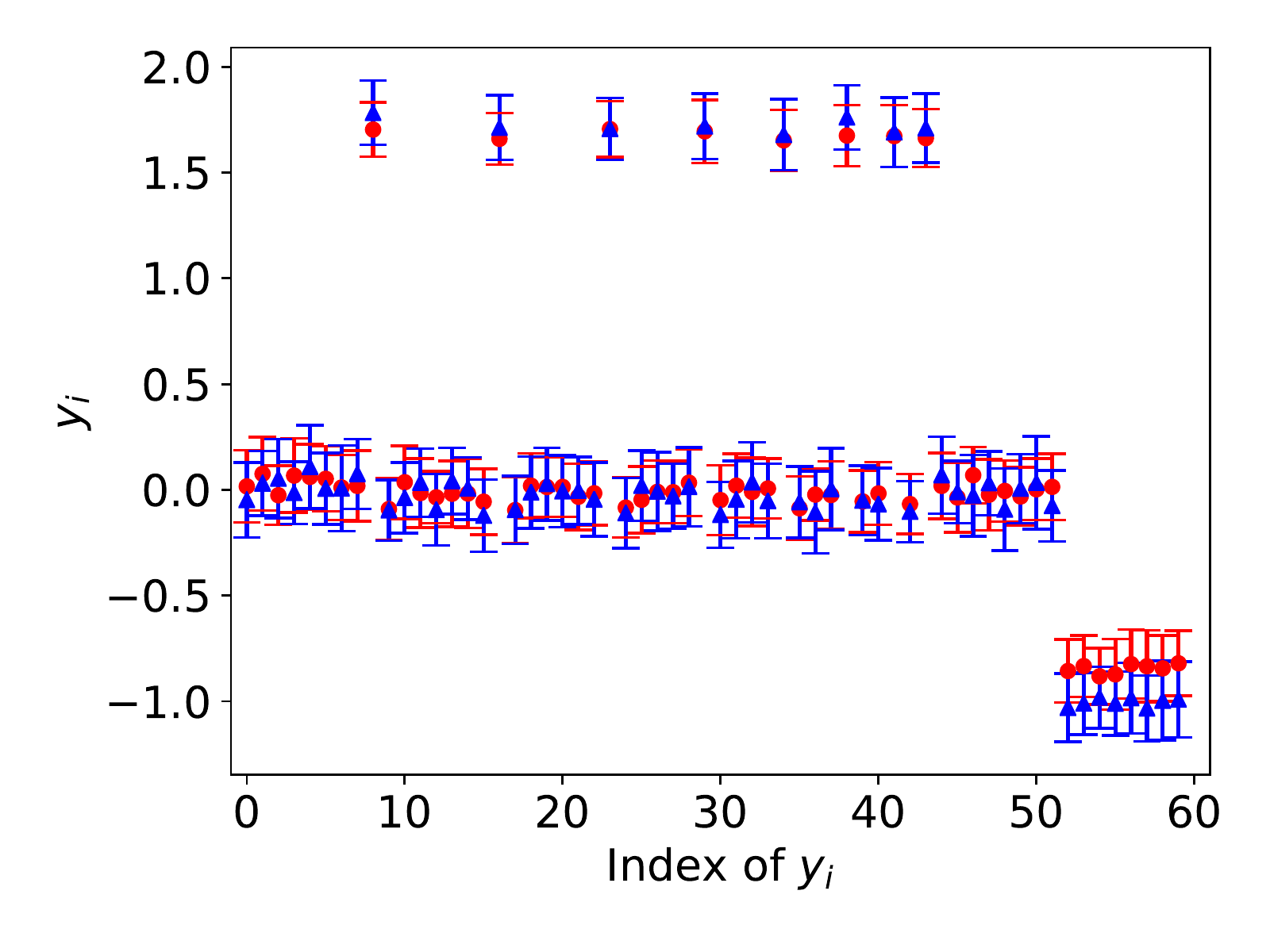}}
  \subfloat[Autocorrelation]{\includegraphics[width=0.49\textwidth]{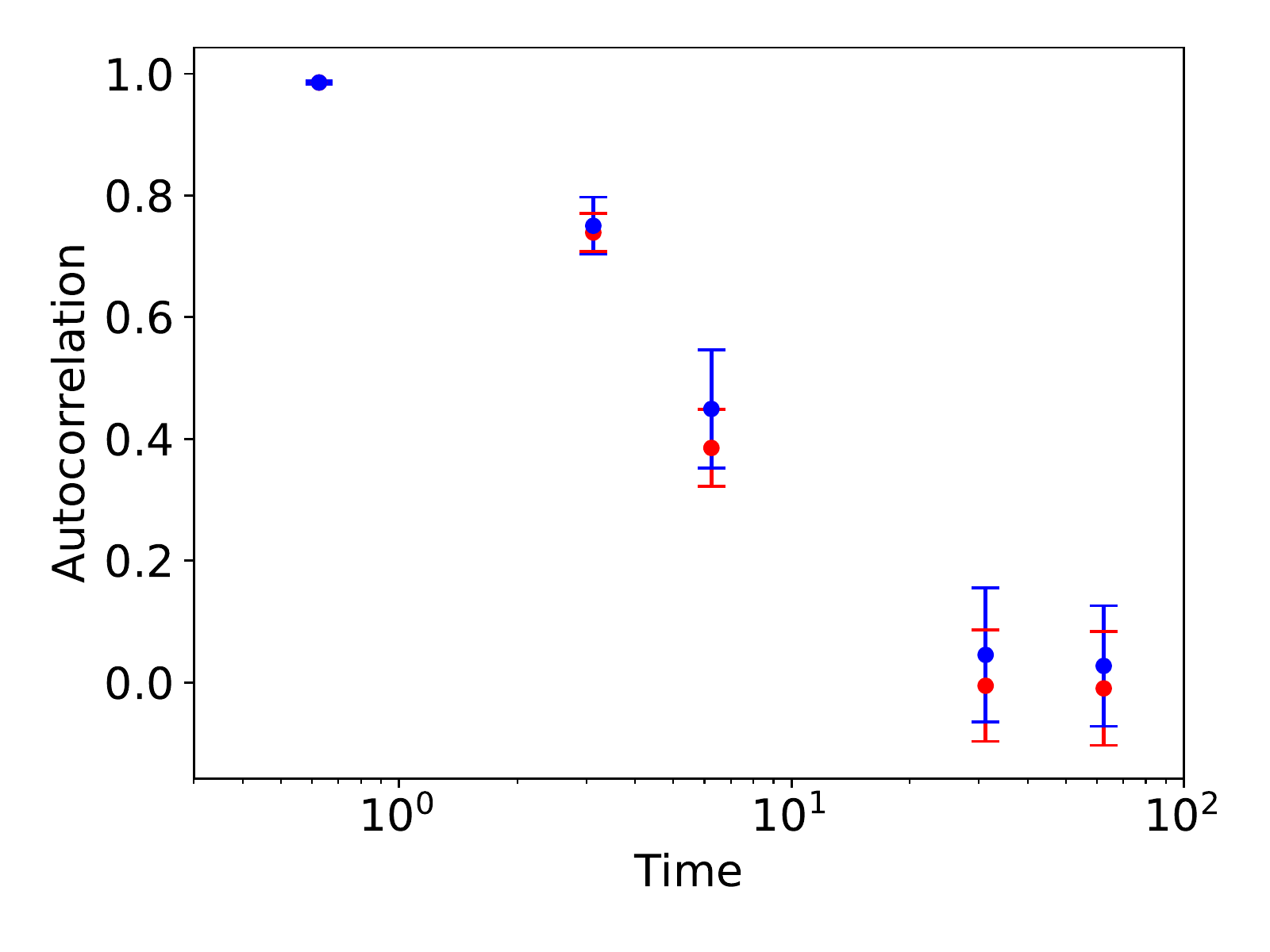}}\\
  \subfloat[Spatial correlation]{\includegraphics[width=0.99\textwidth]{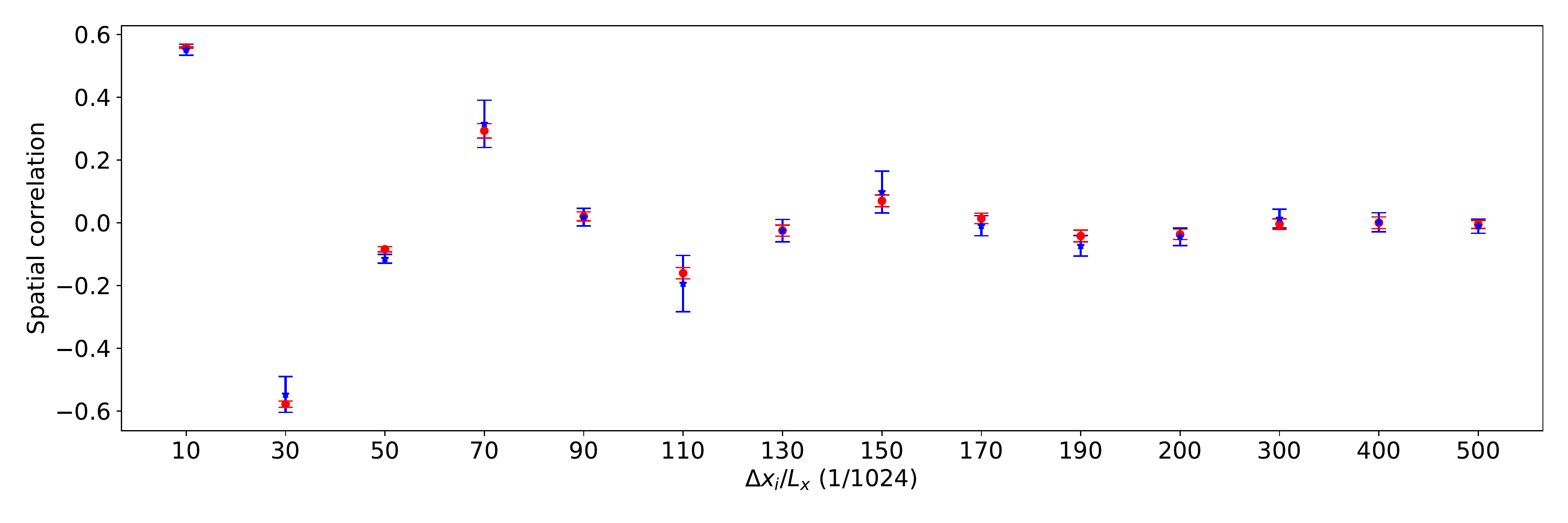}}
    \caption{Comparison between the data from the true system and results of the first sparse EKI, including (a) first four moments, (b) autocorrelation at $x=0$, and (c) time-averaged spatial correlation.}
  \label{fig:G_KS_1st_EKI}
\end{figure}

The comparison of data is presented in Fig.~\ref{fig:G_KS_1st_EKI} for the first sparse EKI. The comparison of the autocorrelation results at the eight locations is similar, and thus we only present the autocorrelation results at $x=0$. In terms of all three types of data, there are some mismatches between the results of sparse EKI and the true data. In order to evaluate the performance of sparse EKI more precisely, we present the learning of three necessary coefficients ($\alpha_2$, $\alpha_4$, and $\beta_1$) and the $\ell_1$-norm of redundant coefficients in Fig.~\ref{fig:params_KS_1st_EKI}. It is clear that there are some biases in the estimated parameters $\alpha_2$ and $\alpha_4$, while the sparse EKI successfully drives the $\ell_1$-norm of redundant coefficients close to zero.

\begin{figure}[!htbp]
  \centering
  \includegraphics[width=0.55\textwidth]{params_legend_sigma}
  \subfloat[$\alpha_2$]{\includegraphics[width=0.49\textwidth]{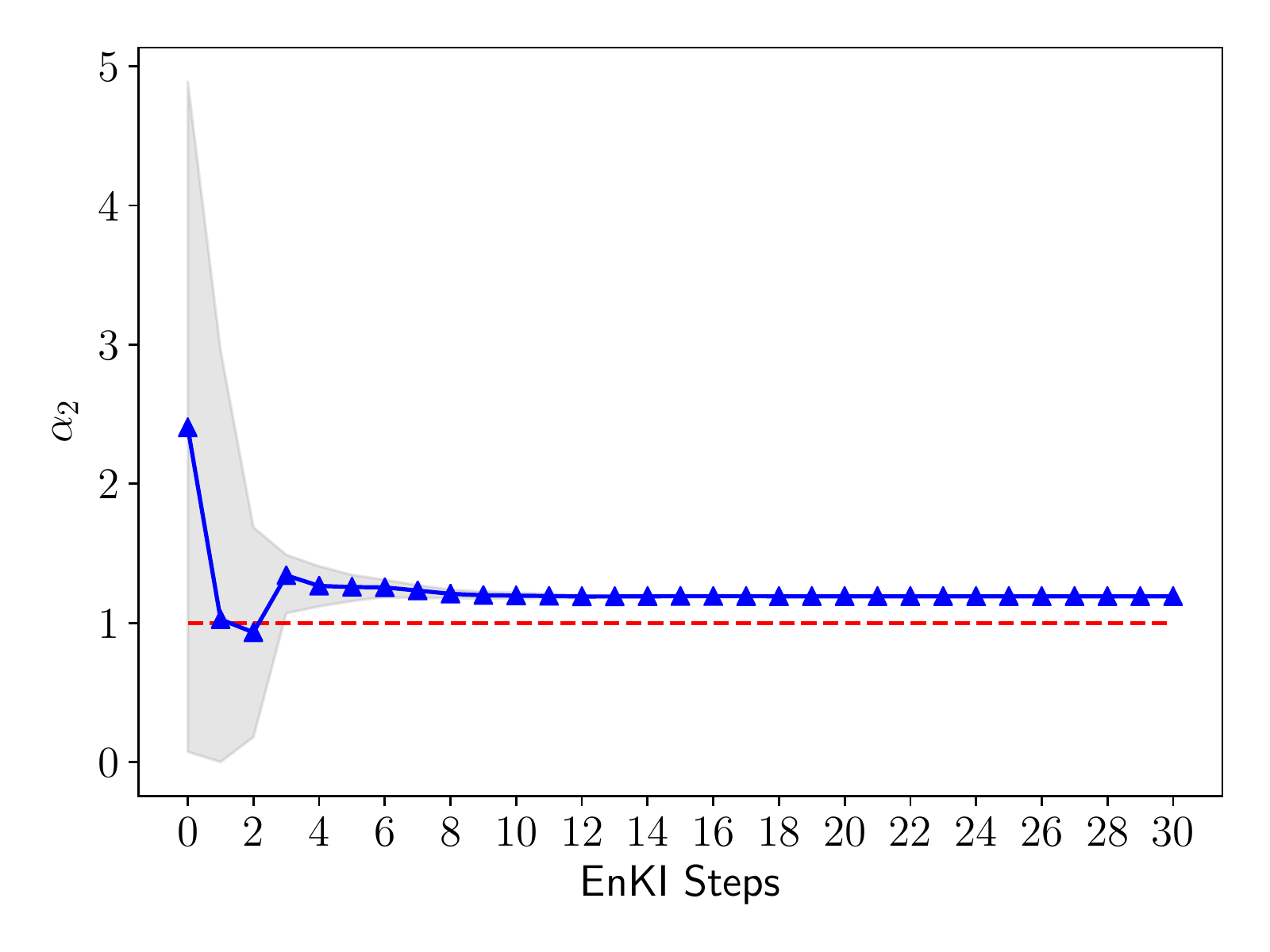}}
  \subfloat[$\alpha_4$]{\includegraphics[width=0.49\textwidth]{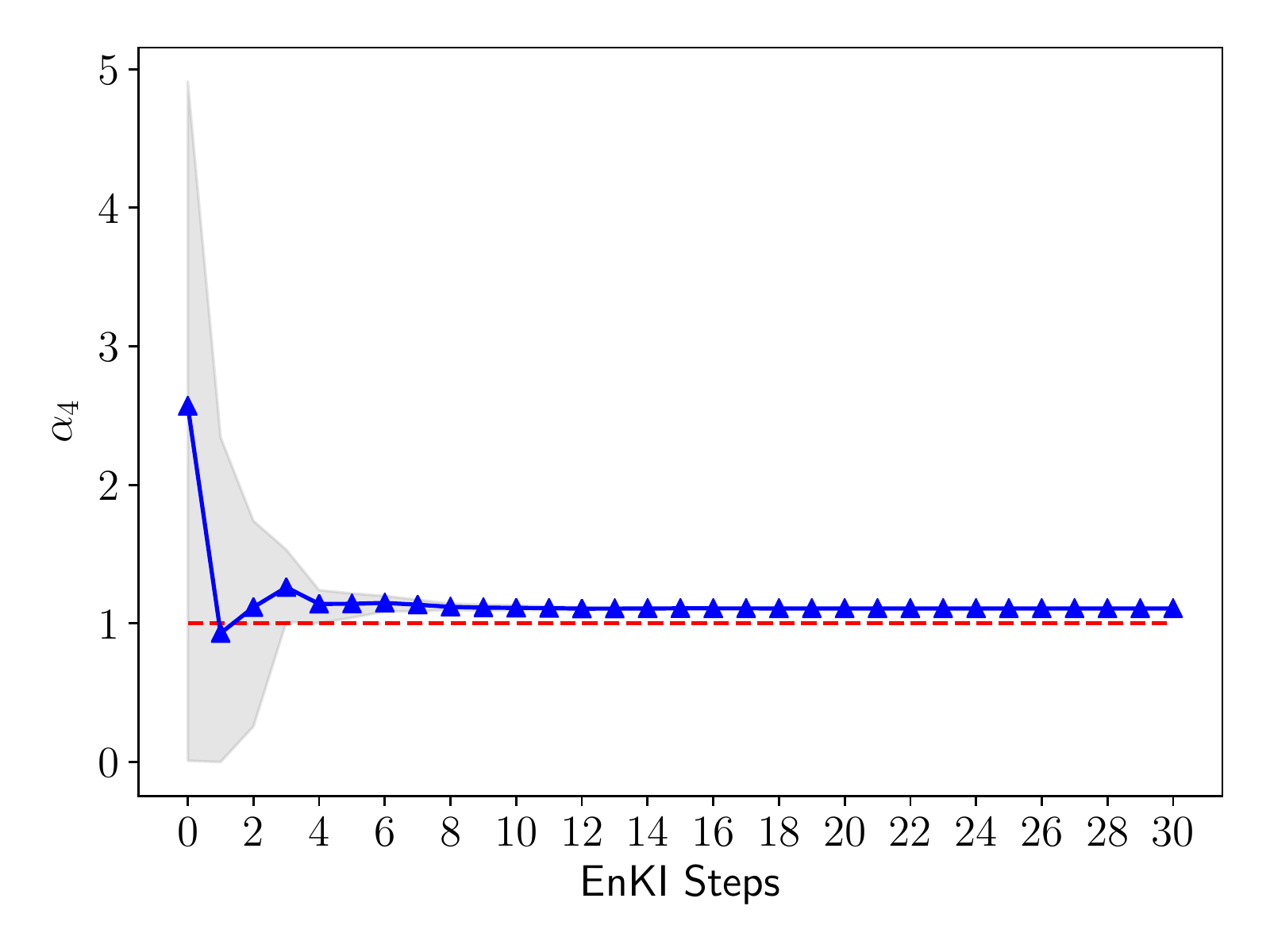}}\\
  \subfloat[$\beta_1$]{\includegraphics[width=0.49\textwidth]{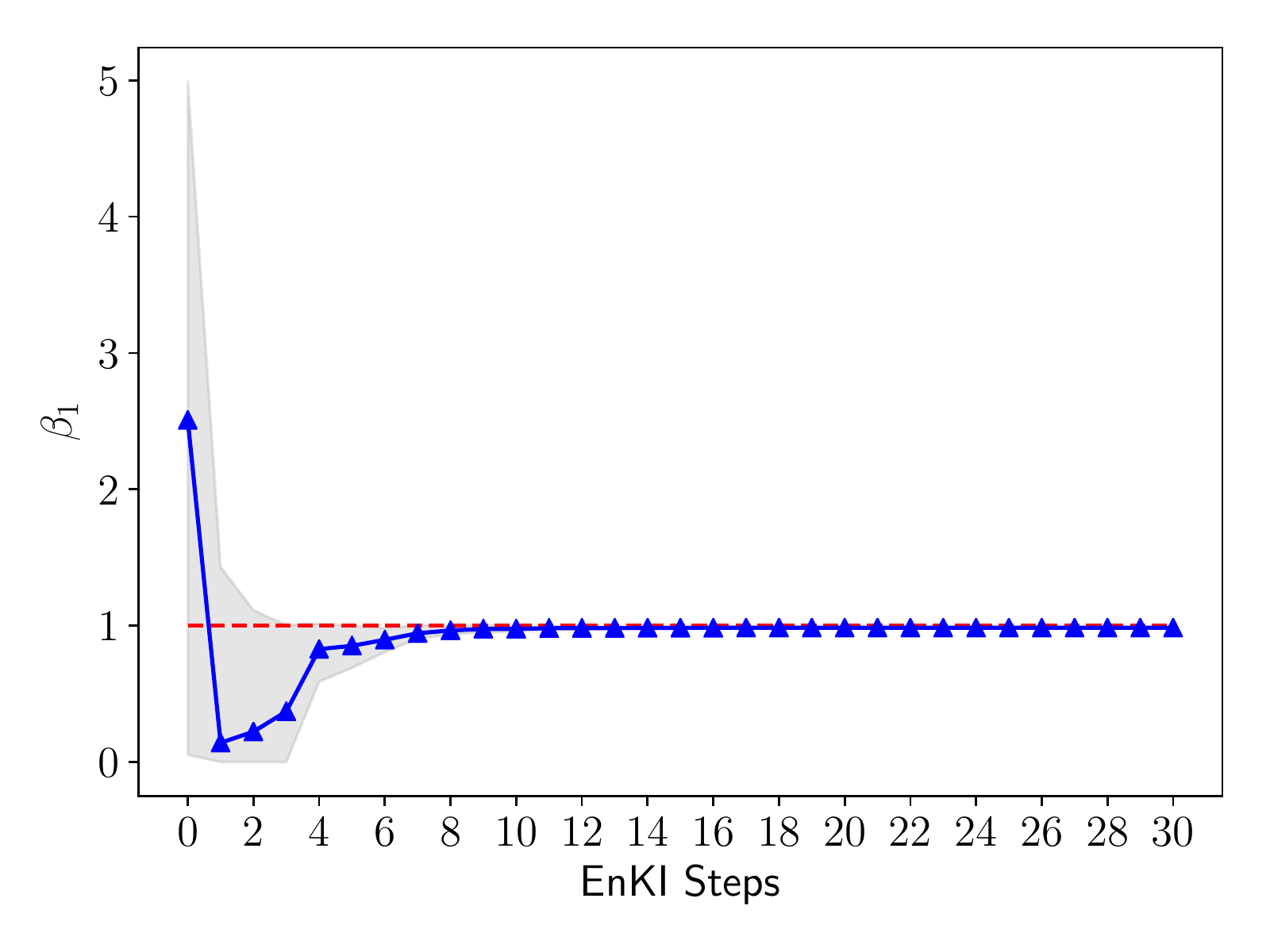}}
  \subfloat[Other coefficients]{\includegraphics[width=0.49\textwidth]{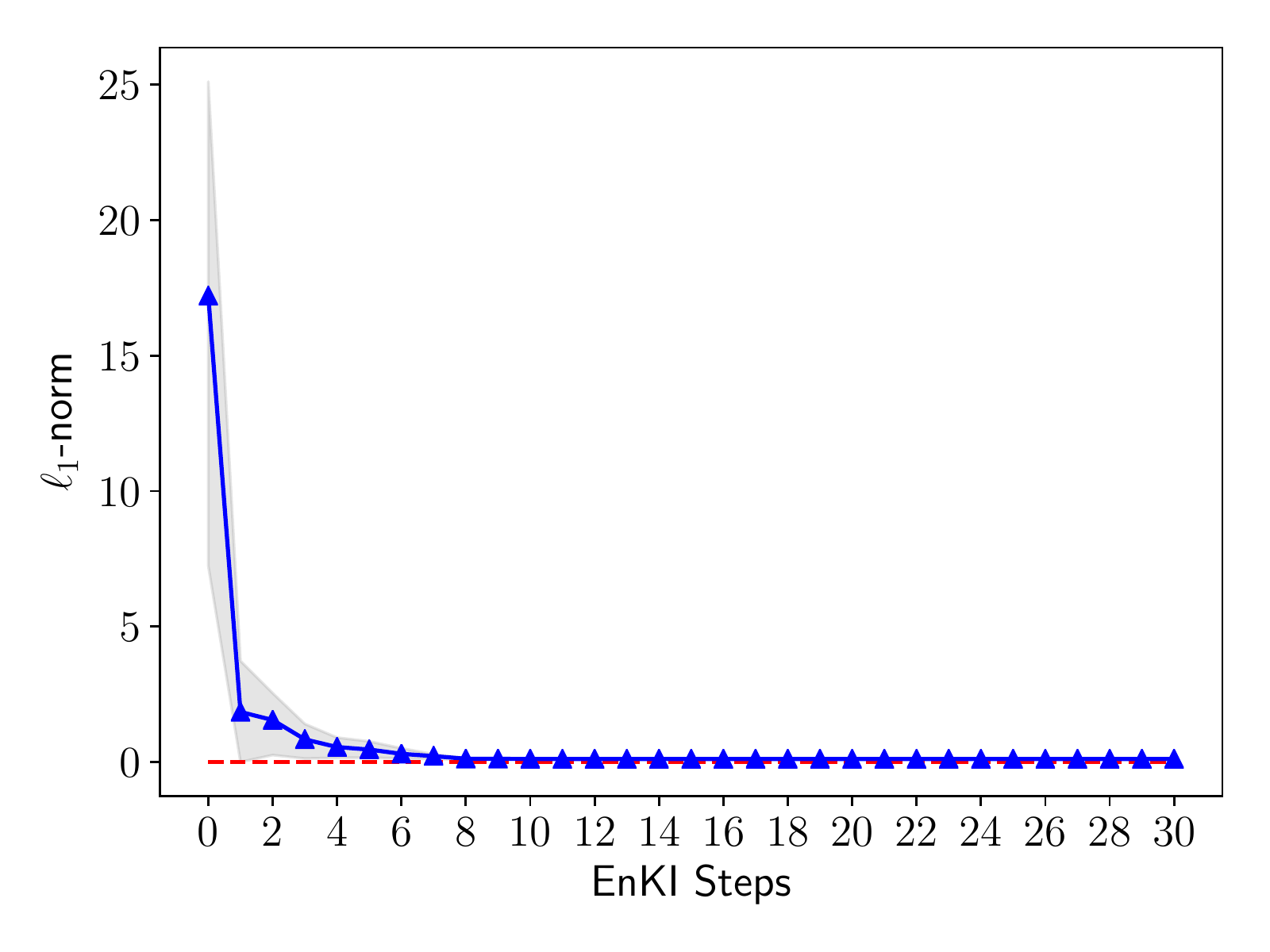}}
    \caption{Estimated coefficients from the first EKI, including (a)--(c) necessary coefficients of Kuramoto-Sivashinsky Equation ($\alpha_2$, $\alpha_4$, and $\beta_1$), and (d) the $\ell_1$-norm of other coefficients. There are four nonzero coefficients ($\alpha_2$, $\alpha_4$, $\beta_1$, and $\beta_3$) in the results of first EKI.}
  \label{fig:params_KS_1st_EKI}
\end{figure}

In order to improve the accuracy of the estimated parameters in Fig.~\ref{fig:params_KS_1st_EKI}, we perform a second sparse EKI with reduced basis functions, i.e., those with non-zero coefficients ($\alpha_2$, $\alpha_4$, $\beta_1$, and $\beta_3$) in the results obtained in the
first application of sparse EKI. The comparison of data is presented in Fig.~\ref{fig:G_KS_2nd_EKI} for the second sparse EKI, which shows a much better agreement with all three types of data. The estimated parameters from the second sparse EKI are presented in Fig.~\ref{fig:params_KS_2nd_EKI}, confirming that the Kuramoto-Sivashinsky equation can be accurately identified by using sparse EKI. The results are summarized in Table~\ref{tab:coeffs-KS}
demonstrating that the sparse EKI method correctly recovers the
three non-zero coefficients to an accuracy of less than $2.5\%$ and correctly
zeros out all other coefficients; in contrast, the standard EKI finds a non-sparse  fit to
the data in which all $10$ basis coefficients are active. This
concluding example demonstrates both the power of sparsity promoting
learning of dynamical systems, and the ability of the sparse EKI method to learn dynamical systems from indirect, partial and nonlinear observations.

\begin{figure}[!htbp]
  \centering
  \includegraphics[width=0.2\textwidth]{G_legend}
  \subfloat[Moments]{\includegraphics[width=0.49\textwidth]{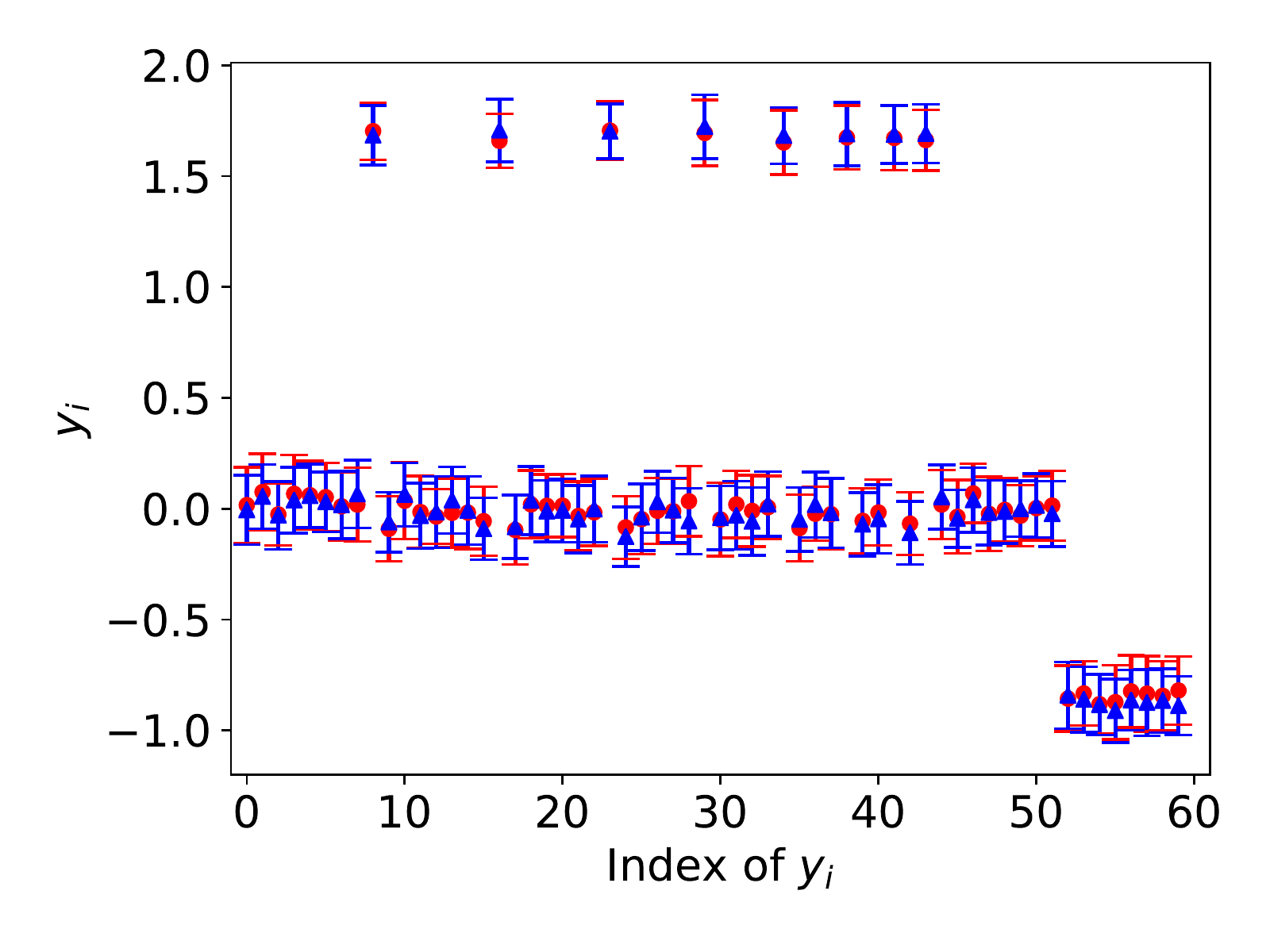}}
  \subfloat[Autocorrelation]{\includegraphics[width=0.49\textwidth]{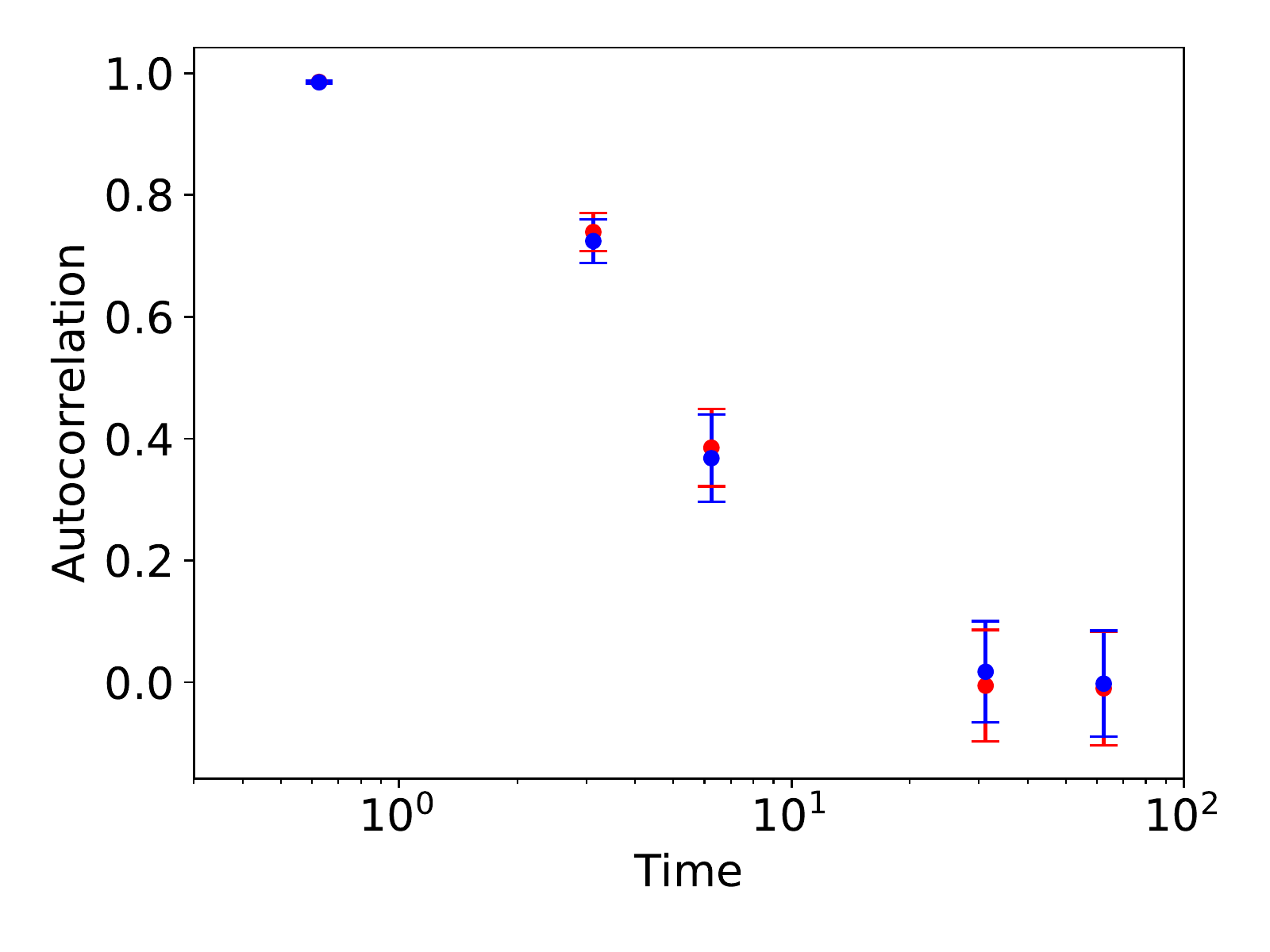}}\\
  \subfloat[Spatial correlation]{\includegraphics[width=0.99\textwidth]{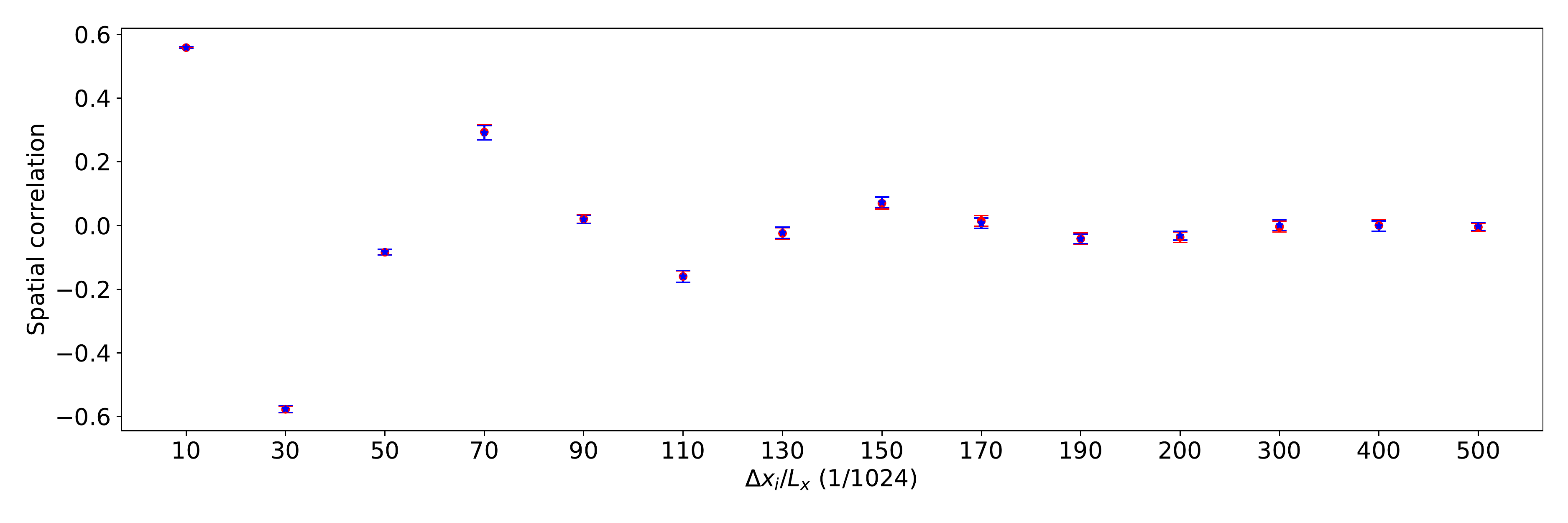}}
    \caption{Comparison between the data from the true system and results of the second sparse EKI, including (a) first four moments, (b) autocorrelation at $x=0$, and (c) time-averaged spatial correlation.}
  \label{fig:G_KS_2nd_EKI}
\end{figure}

\begin{figure}[!htbp]
  \centering
  \includegraphics[width=0.55\textwidth]{params_legend_sigma}
  \subfloat[$\alpha_2$]{\includegraphics[width=0.49\textwidth]{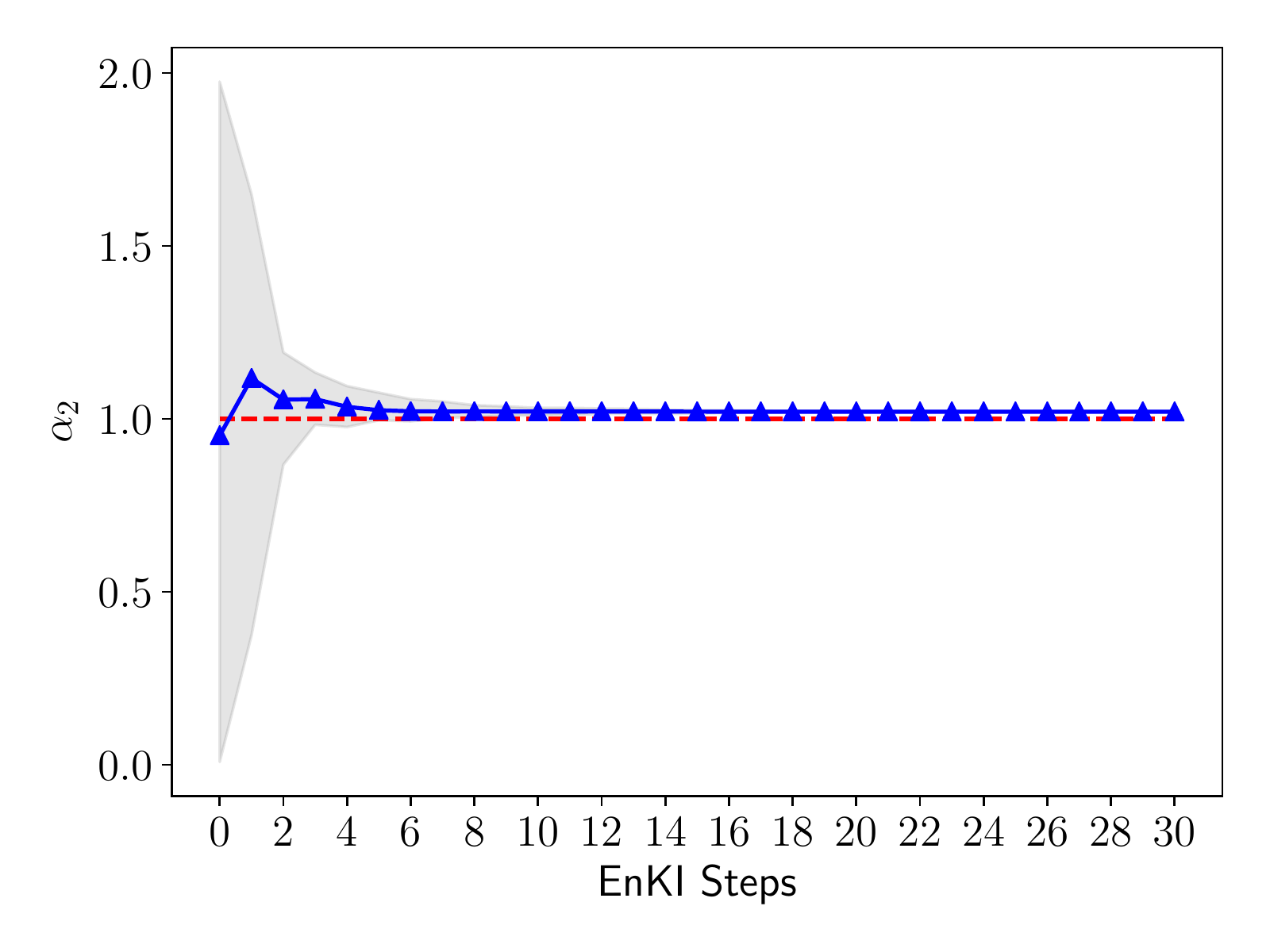}}
  \subfloat[$\alpha_4$]{\includegraphics[width=0.49\textwidth]{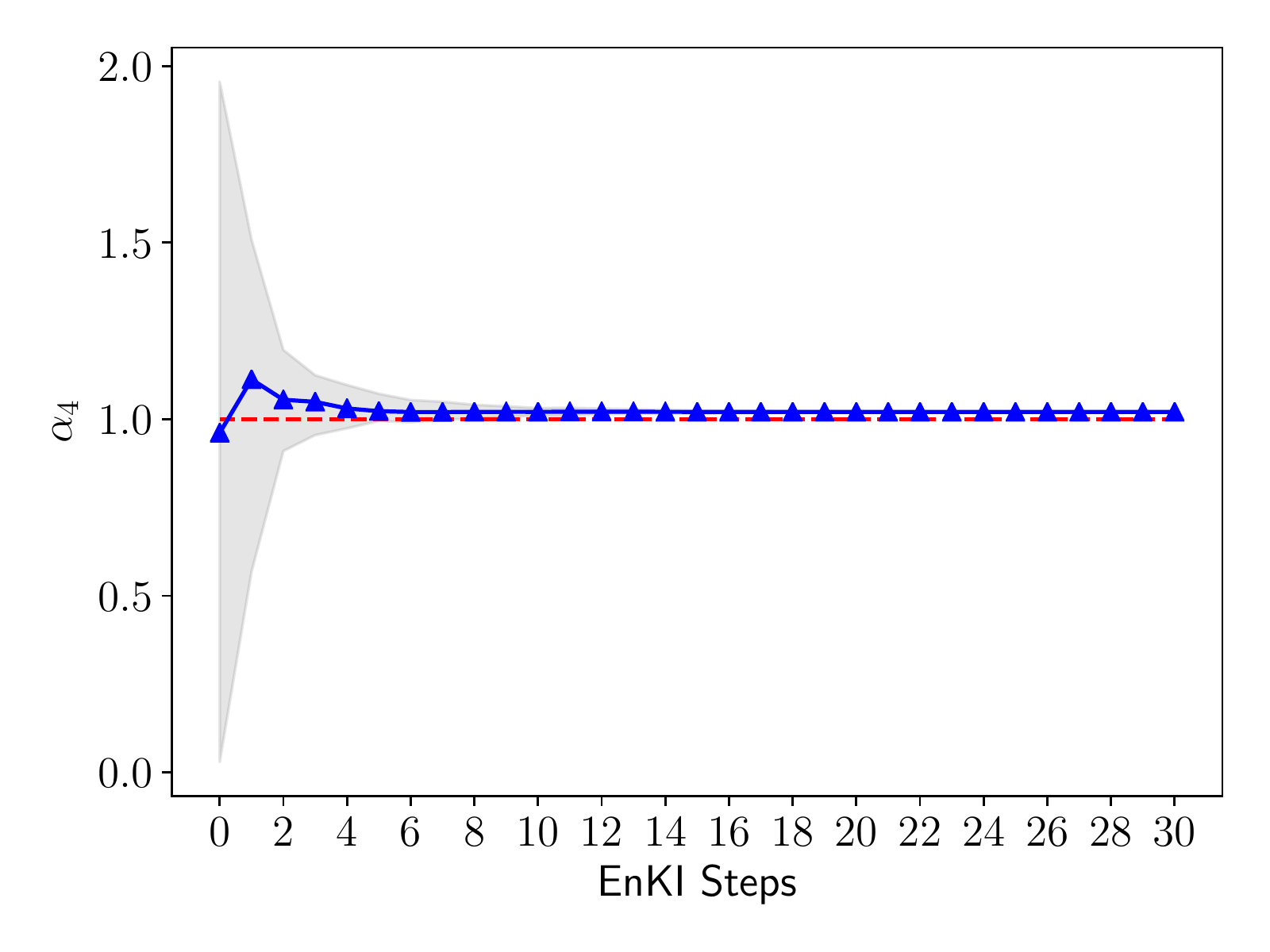}}\\
  \subfloat[$\beta_1$]{\includegraphics[width=0.49\textwidth]{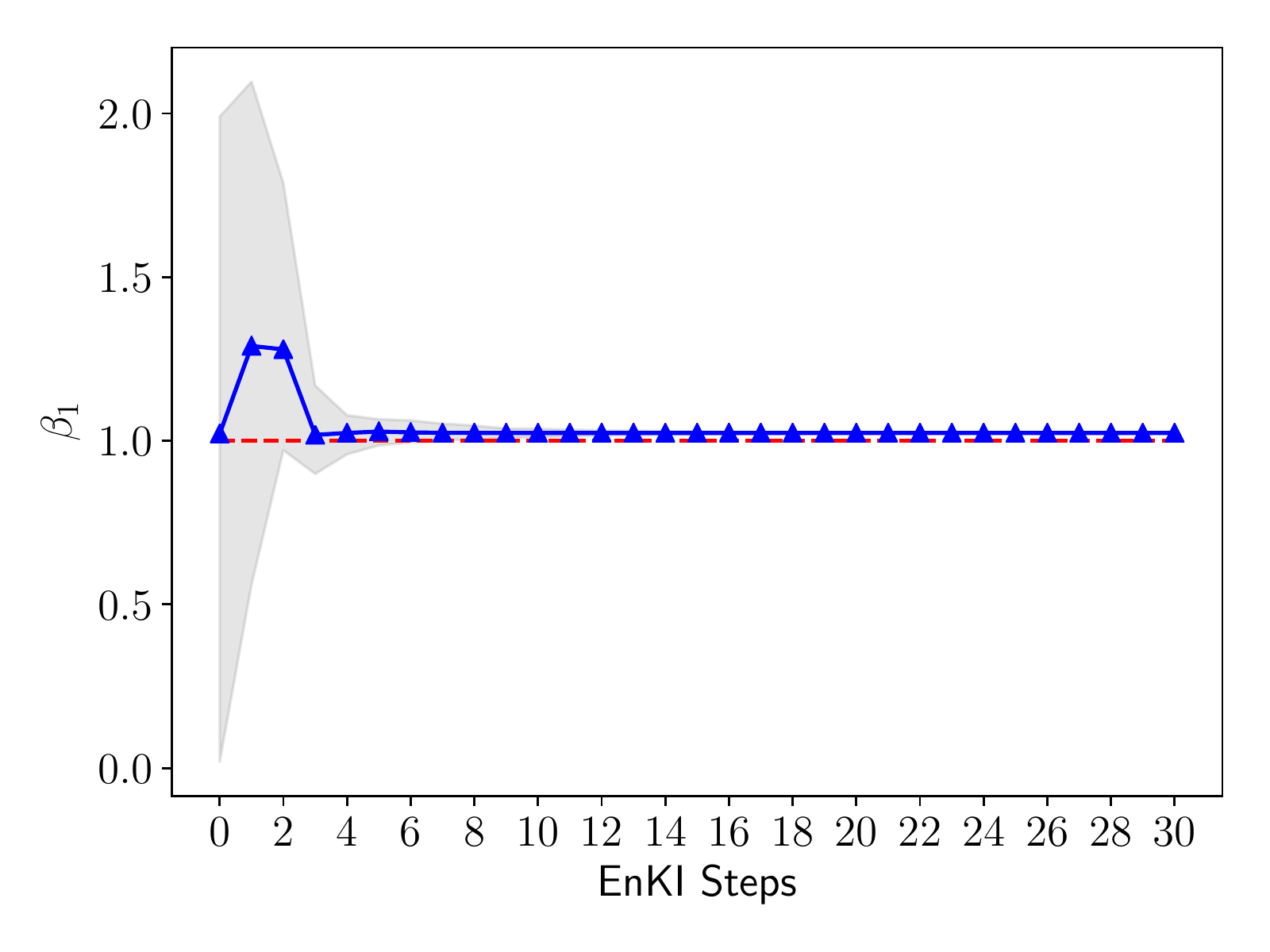}}
  \subfloat[$\beta_3$]{\includegraphics[width=0.49\textwidth]{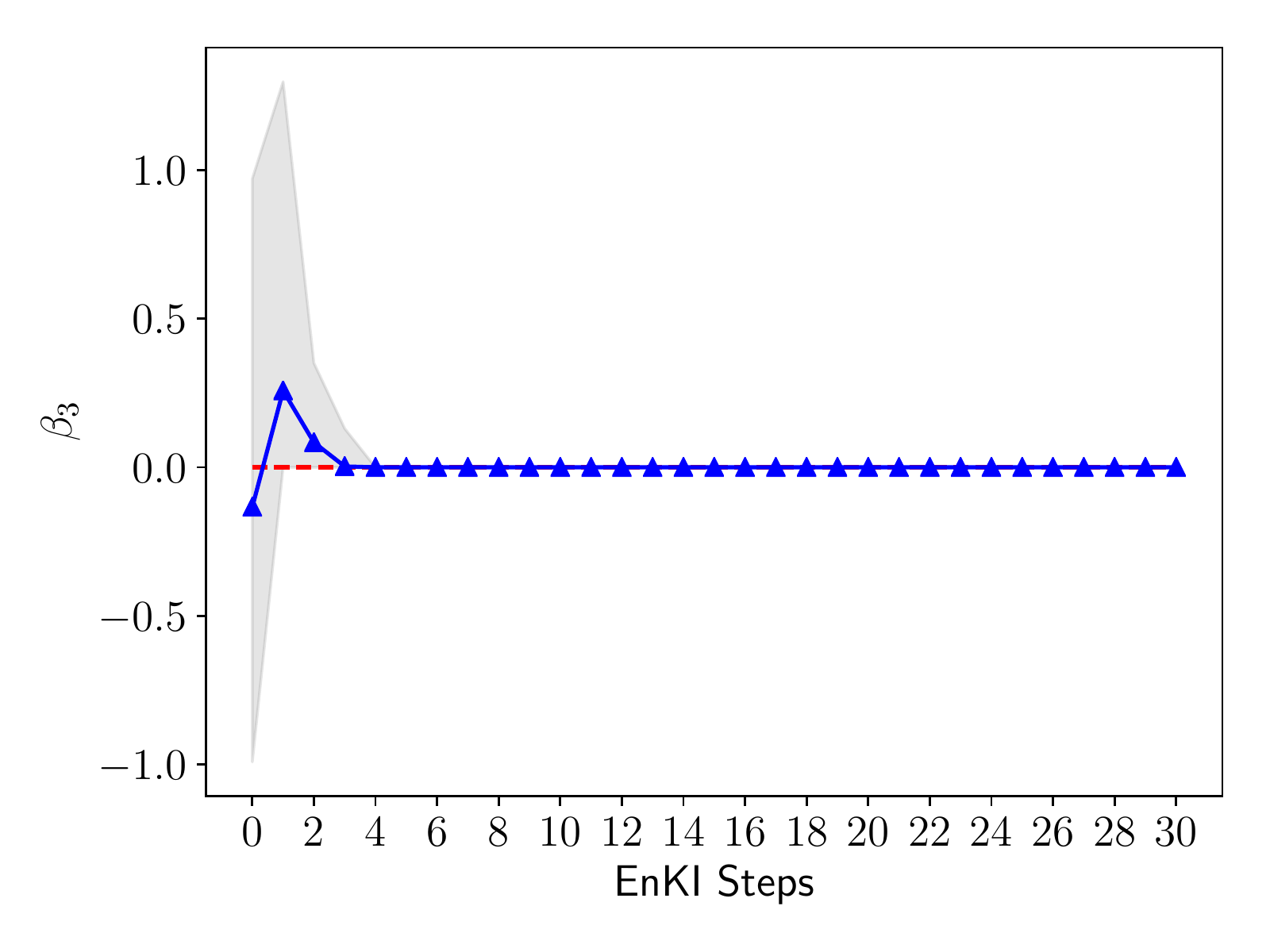}}
    \caption{Estimated coefficients from the second sparse EKI, including (a)--(c) necessary coefficients of Kuramoto-Sivashinsky Equation ($\alpha_2$, $\alpha_4$, and $\beta_1$), and (d) the redudant coefficient $\beta_3$.}
  \label{fig:params_KS_2nd_EKI}
\end{figure}

\clearpage
\section{Conclusions}
\label{sec:C}
We have demonstrated that sparsity may be naturally incorporated
into ensemble Kalman-based inversion methods, leading to the
sparse EKI algorithm. The focus of the paper is on learning
dynamical models from indirect, partial and nonlinear observations, because the solution of such inverse problems in the sparse setting has been an outstanding challenge in the field. We focus on time-averaged data as a canonical
example of such data. The numerical results  presented showcase the success of discovering dynamical models in a variety of different examples, demonstrating that sparse learning for model discovery from time-averaged functions of states can be effectively achieved. The proposed sparse learning methodology extends the scope of data-driven discovery of dynamical models from linear observation operators to previously challenging applications where the observation operator is nonlinear. The methodology may in principle be used for the solution of a wide class of nonlinear
inverse problems in which sparse solutions are sought. As with
existing methods to find sparse solutions of \emph{linear} inverse
problems, the core computational task is a quadratic programming
problem, subject to linear inequality constraints, and therefore
easily implemented. Remarkably, this same computational task
allows for solution of \emph{nonlinear} inverse problems when
using the proposed sparse EKI method.

Directions for future research stemming from our work
include:

\begin{itemize}

\item application of the method to other nonlinear inverse
problems where sparse learning from a dictionary of functions
is valuable;

\item development of theory to support the
use of the algorithm, noting however that, even
in the absence of constraints and imposition of
sparsity, the theoretical underpinnings of ensemble
inversion methods are only starting to be understood 
\cite{garbuno2020interacting,schillings2017analysis};

\item detailed study of the use of different algorithms
for the imposition of sparsity on the basic quadratic
programming task, which is undertaken iteratively during
ensemble Kalman inversion;

\item careful comparison of ensemble Kalman inversion with
other sparsity imposing methods for solving the nonlinear
inverse problem of learning dynamical systems from
data in time-averaged form.

\end{itemize}

\vspace{0.3in}

\noindent{\bf Acknowledgements} We thank Melanie Bieli, Tobias Bischoff and Anna Jaruga for sharing their formulation of the moment-based coalescence equation, and for discussions about it. All authors are supported by the generosity of Eric and Wendy Schmidt by recommendation of the Schmidt Futures program, by Earthrise Alliance, Mountain Philanthropies, the Paul G. Allen Family Foundation, and the National Science Foundation (NSF, award AGS1835860). A.M.S. is also supported by NSF (award DMS-1818977) and by the Office of Naval Research (award N00014-17-1-2079).

\clearpage

\clearpage
\appendix
\section{Numerical Solution Of The Extended K-S Equation}
\label{sec:KS_solver}
We consider the Extended Kuramoto--Sivashinsky (E-K-S) equation on a 
periodic domain in one dimension:
\begin{equation}
\begin{aligned}
\partial_t u&=-\sum_{j=1}^5 \Bigl(\alpha_j \partial^j_x u +
\beta_j u^{j} \partial_x u\Bigr),\quad x\in \mathbb{T}^L,\\
u|_{t=0}&=u_0;
\end{aligned}
\end{equation}
here $\mathbb{T}^L$ denotes the torus $[0,L]$. We write the E-K-S equation as
\begin{equation}
\label{eq:eks}
\partial_t u=\mathcal{L}u+\mathcal{N}(u),
\end{equation}
where
\begin{equation}
\begin{aligned}
\mathcal{L}u&=-\sum_{j=1}^5 \alpha_j \partial^j_x u,\\
\mathcal{N}(u)&=-\sum_{j=1}^5 \frac{\beta_j}{(j+1)} \partial_x (u^{j+1}).
\end{aligned}
\end{equation}

 Using the Crank-Nicolson/Adams-Bashforth scheme, the above equation can be discretized as
\begin{equation}
\frac{u_{(n+1)}-u_{(n)}}{\Delta t}=\mathcal{L}\frac{u_{(n+1)}+u_{(n)}}{2}+\frac{3}{2}\mathcal{N}(u_{(n)})-\frac{1}{2}\mathcal{N}(u_{(n-1)}),
\end{equation}
where $u_{(n)}=u(x,n\Delta t)$. We introduce the Fourier transform in the spatial domain:
\begin{equation}
\hat{u}(\xi)=\mathcal{F}(u)=\int_0^L u(x)e^{-2\pi ix\xi}dx.
\end{equation}
Using this notation we obtain the discretization:
\begin{equation}
\label{eq:ks_fourier}
\frac{\hat{u}_{(n+1)}-\hat{u}_{(n)}}{\Delta t}=\hat{\mathcal{L}}\Bigl(\frac{\hat{u}_{(n+1)}+\hat{u}_{(n)}}{2}\Bigr)+\frac{3}{2}\hat{\mathcal{N}}(\hat{u}_{(n)})-\frac{1}{2}\hat{\mathcal{N}}(\hat{u}_{(n-1)}),
\end{equation}
where
\begin{equation}
\begin{aligned}
\hat{\mathcal{L}}\hat{u}&=-\sum_{j=1}^5 \alpha_j (2\pi i \xi)^j\hat{u},\\
\hat{\mathcal{N}}(\hat{u})&=-\sum_{j=1}^5 \frac{2\pi i \xi \beta_j}{(j+1)} \mathcal{F}\left(\left(\mathcal{F}^{-1}\left(\hat{u}\right)\right)^{j+1}\right),
\end{aligned}
\end{equation}
and where $\mathcal{F}^{-1}$ denotes the inverse Fourier transform. 
Note that if $\alpha_4>0$ then
$\lim_{|\xi| \to \infty} Re(\hat{\mathcal{L}})=-\infty$ which makes the
equation well-posed. The discretization in \eqref{eq:ks_fourier} can be further formulated as
\begin{equation}
\label{eq:CM1}
\left(I-\frac{\Delta t}{2}\hat{\mathcal{L}} \right) \hat{u}_{(n+1)}=\left(I+\frac{\Delta t}{2}\hat{\mathcal{L}} \right) \hat{u}_{(n)}+\frac{3\Delta t}{2}\hat{\mathcal{N}}(\hat{u}_{(n)})-\frac{\Delta t}{2}\hat{\mathcal{N}}(\hat{u}_{(n-1)}).
\end{equation}

Alternatively, a standard integrating factor method can be obtained by introducing the Fourier transform in the spatial domain and rewriting the Fourier transform of \eqref{eq:eks} as
\begin{equation}
\partial_t \Bigl(e^{-\hat{\mathcal{L}}t}\hat{u}\Bigr)=e^{-\hat{\mathcal{L}}t}\hat{N}(\hat{u}).
\end{equation}
This equation can be solved numerically by using the second-order Adams--Bashforth scheme to obtain
\begin{equation}
\label{eq:CM2}
\hat{u}_{(n+1)}=e^{\hat{\mathcal{L}}\Delta t}\hat{u}_{(n)}+\frac{3\Delta t}{2}e^{\hat{\mathcal{L}}\Delta t}\hat{\mathcal{N}}(\hat{u}_{(n)})-\frac{\Delta t}{2}e^{2\hat{\mathcal{L}}\Delta t}\hat{\mathcal{N}}(\hat{u}_{(n-1)}).
\end{equation}
The two algorithms \eqref{eq:CM1}, \eqref{eq:CM2}
may be found in \cite{cox2002exponential}.

In this work, numerical clipping is implemented at every time step to avoid possible  blow-up induced by the numerical discretizarion \eqref{eq:CM1} or \eqref{eq:CM2}:
\begin{equation}
[\hat{u}_{(n+1)}]=\mathcal{F}\Bigl(\max\Bigl(\min\Bigl(\mathcal{F}^{-1}(\hat{u}_{(n+1)}), u^{\max}_{(n+1)}\Bigr), u^{\min}_{(n+1)}\Bigr)\Bigr),
\end{equation}
where $u^{\max}_{(n+1)}$ and $u^{\min}_{(n+1)}$ are upper and lower bounds imposed on the simulated state in the spatial domain.
Both algorithms \eqref{eq:CM1}, \eqref{eq:CM2}, with clipping,
were used, initially, to test the robustness of results
to choice of time-stepper; having verified this robustness, all results presented in the paper use algorithm \eqref{eq:CM1}.

\begin{figure}[!htbp]
  \centering
  \includegraphics[width=0.5\textwidth]{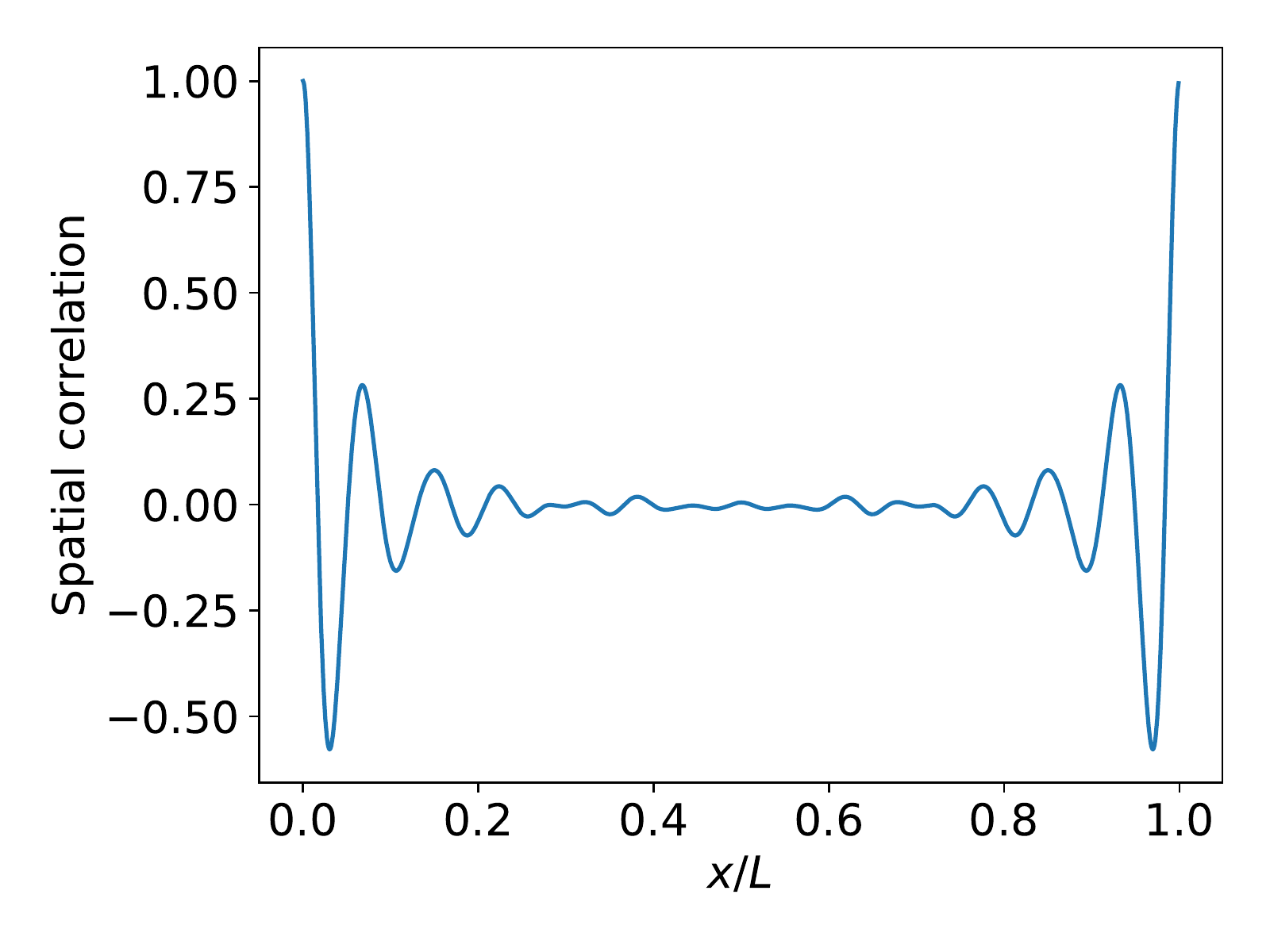}
    \caption{The time-averaged spatial correlation of the simulated states of K-S equation. Normalization has been performed to set the largest value to $1$.}
\label{fig:KS_spatial_corr_truth}
\end{figure}

Using algorithm \eqref{eq:CM1} we compute the time-averaged spatial correlation function defined by
\begin{equation}
\begin{aligned}
C(x)&=\frac{1}{T}\int_0^T\int_0^L u(z,t)u(z+x,t)dzdt. \\
\end{aligned}
\end{equation}
Notice that
$$(\mathcal{F}C)(\xi)=\frac{1}{T}\int_0^T |(\mathcal{F}u)(\xi,t)|^2 dt$$
facilitating straightforward computation in Fourier space.
The function $C(x)$ is shown in Fig.~\ref{fig:KS_spatial_corr_truth}.
It is used as part of the definition of $G$, along with moments
and autocorrelation information, from which we learn parameters.

\end{document}